\documentclass{article}
\usepackage{amssymb,amsmath,amsfonts,amsthm}
\usepackage{color}
\usepackage{authblk}
\usepackage[dvipsnames]{xcolor}
\usepackage{graphicx}
\usepackage{placeins}

\newtheorem{theorem}{Theorem}[section]
\newtheorem{lemma}[theorem]{Lemma}
\newtheorem{proposition}[theorem]{Proposition}
\newtheorem{corollary}[theorem]{Corollary}

\newtheorem*{example}{Example}
\newtheorem*{examples}{Examples}
\newtheorem*{remark}{Remark}

\numberwithin{equation}{section}

\newcommand{\abs}[1]{\lvert{#1}\rvert}    

\newcommand{\cX}{{\mathcal X}} 

\renewcommand{\epsilon}{\varepsilon}
\newcommand{\Ric}{\mathrm{Ric}} 
\DeclareMathOperator{\grad}{grad} 

\newcommand{\RR}{\mathbb{R}}
\newcommand*{\CC}{\mathbb{C}}
\newcommand*{\HH}{\mathbb{H}}
\newcommand*{\NN}{\mathbb{N}}
\newcommand*{\ZZ}{\mathbb{Z}}
\newcommand*{\CP}{{\mathbb{C}}P}

\title{Eigenvalue estimates for the magnetic Hodge Laplacian on differential forms}

\author[1]{Michela Egidi\thanks{\texttt{michela.egidi@uni-rostock.de}}}
\author[2]{Katie Gittins\thanks{\texttt{katie.gittins@durham.ac.uk}}}
\author[3,4]{Georges Habib\thanks{\texttt{ghabib@ul.edu.lb}}}
\author[2]{Norbert Peyerimhoff\thanks{\texttt{norbert.peyerimhoff@durham.ac.uk}}}

\affil[1]{\footnotesize Universit\"at Rostock, Institut f\"ur Mathematik, 18051 Rostock, Germany}
\affil[2]{\footnotesize Department of Mathematical Sciences, Durham University, Mathematical Sciences and Computer Science Building, Upper Mountjoy Campus, Stockton Road, Durham University, DH1 3LE, United Kingdom}
\affil[3]{\footnotesize Lebanese University, Faculty of Sciences II, Department of Mathematics, P.O. Box 90656 Fanar-Matn, Lebanon}
\affil[4]{\footnotesize Universit\'e de Lorraine, CNRS, IECL, 54506 Nancy, France}

\date{}

\begin{document}

\maketitle

\vspace*{-1cm}

\begin{abstract}
     In this paper we introduce the magnetic Hodge Laplacian, which is a generalization of the magnetic Laplacian on functions to differential forms. We consider various spectral results, which are known for the magnetic Laplacian on functions or for the Hodge Laplacian on differential forms, and discuss similarities and differences of this new ``magnetic-type'' operator.
\end{abstract}

\tableofcontents

\section{Introduction and statement of results}

The classical magnetic Laplacian on a Riemannian manifold $(M^n,g)$ associated to a smooth real $1$-form $\alpha \in \Omega^1(M)$ acts
on the space of smooth complex-valued functions $C^\infty(M,\CC)$ and is given by
\begin{equation} \label{eq:Dalpha}
\Delta^\alpha  = \delta^\alpha d^\alpha ,
\end{equation}
where $d^\alpha := d^M + i \alpha $ and $\delta^\alpha:=\delta^M-i\langle\alpha^\sharp,\cdot\rangle$ (note that $\delta^M$ is the $L^2$-adjoint of $d^M$). Here $\alpha^\sharp \in \cX(M)$ is the vector field corresponding to the $1$-form $\alpha$ via the musical isomorphism
$\langle \alpha^\sharp,X \rangle = \alpha(X)$. The $1$-form $\alpha$ is called the \emph{magnetic potential} and $d^M \alpha$ is  the \emph{magnetic field}. The magnetic Laplacian $\Delta^\alpha$ can be viewed as a first order perturbation of the usual Laplacian $\Delta^M = \delta^M d^M$, namely for any $f\in C^\infty(M,\CC)$,
\begin{equation}\label{eq:DalphaD}
\Delta^\alpha f = \Delta^M f - 2 i \langle \grad f, \alpha^\sharp \rangle + ( |\alpha^\sharp|^2 - i\, {\rm div}\, \alpha^\sharp) f.
\end{equation}

In the case of a closed manifold or a compact manifold with boundary, both operators $\Delta^M$ and $\Delta^\alpha$ (with suitable boundary conditions when $\partial M \neq \emptyset$) have a discrete spectrum with non-decreasing
eigenvalues with multiplicity denoted by $(\lambda_k(M))_{k \in \NN}$ and $(\lambda_k^\alpha(M))_{k \in \NN}$, respectively. There are very few Riemannian manifolds where the complete set of eigenvalues can be given  explicitly. Amongst them is the unit round sphere $\mathbb{S}^n$ with the standard metric $g$, whose eigenfunctions can be described as spherical harmonics. In Appendix \ref{sec:berger}, we give an explicit derivation of the spectrum of a magnetic Laplacian on $(\mathbb{S}^3,g)$ with a special magnetic potential $\alpha$. This derivation is based on the Hopf fibration $\mathbb{S}^1 \hookrightarrow \mathbb{S}^3 \rightarrow \mathbb{S}^2$, and $\alpha$ is a constant magnetic field along the $\mathbb{S}^1$-fibers.

In analogy with the generalization of the usual Laplacian $\Delta^M$ on functions to the Hodge Laplacian $\delta^M d^M + d^M \delta^M$ on differential forms,  it is natural to generalize the magnetic Laplacian on functions to complex differential forms as follows. On the set of complex-valued differential $p$-forms $\Omega^p(M,\CC)$, we define
$$ \Delta^\alpha:= \delta^\alpha d^\alpha + d^\alpha \delta^\alpha $$
where $d^\alpha:= d^M+ i \alpha \wedge$ and  $\delta^\alpha:=\delta^M-i\alpha^\sharp\lrcorner$ is its formal adjoint. Both $d^\alpha$ and $\delta^\alpha$ can also be expressed via the {\it magnetic covariant derivative}
$\nabla^\alpha_X Y:=\nabla^M_X Y+i\alpha(X) Y$ for any $X,Y\in C^\infty(TM\otimes\mathbb{C})$ (see formula \eqref{eq:localddelta}).
We refer to this operator $\Delta^\alpha$ acting on $\Omega^p(M,\CC)$ as the \emph{magnetic Hodge Laplacian} on complex $p$-forms.

\medskip

We establish the following results for the magnetic Hodge Laplacian on an oriented Riemannian manifold $(M^n,g)$:
\begin{itemize}
    \item[(a)] We show that the magnetic Hodge Laplacian commutes with the Hodge star operator (see Corollary \ref{cor:maglapstar}).
    \item[(b)] We derive a magnetic analogue of the classical Bochner-Weitzenb\"ock formula  (see Theorem \ref{thm:magboch}).
    \item[(c)] We prove gauge invariance of the magnetic Laplacian on forms $\Delta^\alpha$ (see Corollary \ref{cor:gaugeinv}).
    \item[(d)] We obtain a Shigekawa-type result (see Theorem \ref{thm:shiforms}) for the magnetic Hodge Laplacian
    $\Delta^{\alpha}$ on a closed Riemannian manifold $M$ in the case where $M$ has a parallel $p$-form and $\alpha$ is a Killing $1$-form (for the original statement, see \cite{Sh87}).
    \item[(e)] Following a result by Gallot-Meyer \cite{GM:75} for the Hodge Laplacian, we derive a lower bound for the first eigenvalue of the magnetic Hodge Laplacian for closed manifolds (see Theorem \ref{thm:gm}).
    \item[(f)] Following a result by Colbois-El Soufi-Ilias-Savo \cite{CESIS-17} for the magnetic Laplacian on functions, we derive an upper bound for the first eigenvalue of the magnetic Hodge Laplacian for closed manifolds (see Theorem \ref{thm:CS}).
    \item[(g)] We show that in general the diamagnetic inequality does not hold for magnetic Hodge Laplacians (Corollary \ref{cor:notdiamagineq}). In fact, we give a counterexample which is based on the calculations in Appendix \ref{sec:berger}. In addition, we give an explicit characterization which determines when the diamagnetic inequality holds for $\Delta^{t \xi}$ with $\xi$ a Killing vector field (see Corollary \ref{cor:diam}).
    \item[(h)] Following the work of Raulot-Savo in \cite{RS:11}, we derive a Reilly formula for the magnetic Hodge Laplacian on Riemannian manifolds with boundary (see Theorem \ref{thm:reilly}) and use it to derive a lower bound for the first eigenvalue of the magnetic Hodge Laplacian on an embedded hypersurface of a Riemannian manifold (see Theorem \ref{thm:rsestimate}).
    \item[(i)] Following the work of Guerini-Savo in \cite{GS}, we derive a ``gap'' estimate between the first eigenvalues of consecutive $p$-values of the magnetic Hodge Laplacians on $\Omega^p(M,\CC)$ for isometrically immersed manifolds $(M^n,g)$ in Euclidean space $\RR^{n+m}$ (see Theorem \ref{gapestimatethm}).
\end{itemize}

\noindent{\bf Acknowledgment:} The third named author thanks Durham University for its hospitality during his stay. He also thanks the Alexander von Humboldt foundation and the Alfried Krupp Wissenschaftskolleg in Greifswald.

\section{Review of the magnetic Laplacian for functions}
\label{sec:maglapfunc}

Before we introduce the magnetic Hodge Laplacian in the next section, we first recall some results for the classical magnetic Laplacian on functions. Let $(M^n,g)$ be a Riemannian manifold  and $\alpha \in \Omega^1(M)$. The magnetic Laplacian $\Delta^\alpha$ acting on complex-valued smooth functions defined by formula \eqref{eq:Dalpha} has the property of gauge invariance, that is $\Delta^\alpha(e^{if})=e^{if}\Delta^{\alpha+d^M f}$ for any smooth real-valued function $f$. When $M$ is compact  (with or without boundary), the spectrum of $\Delta^\alpha$ (or with suitable boundary conditions when $\partial M\neq\emptyset$) is discrete. Therefore, by the gauge invariance, the spectrum of $\Delta^\alpha$ is equal to the spectrum of $\Delta^{\alpha+d^M f}$. Thus, when $\alpha$ is exact, the spectrum of $\Delta^\alpha$ reduces to that of the usual Laplace-Beltrami operator. In \cite[Prop. 3]{CS18}, it is proven  that one can always assume that $\alpha$ is a co-closed $1$-form (and tangential, i.e. $\nu\lrcorner\alpha=0$, when $M$ has a boundary) without changing the spectrum of $\Delta^\alpha$. Moreover, by using the Hodge decomposition on compact manifolds, the authors show in \cite[Prop. 1]{CESIS-17}  that one can further consider $\alpha$ to be of the form
$$ \alpha = \delta^M \psi + h,$$
where $\psi$ is a $2$-form on $M$ (with $\nu\lrcorner\psi=0$ when $\partial M\neq\emptyset$), and $h$ is a harmonic $1$-form on $M$, that is, $d^M h = \delta^M h = 0$ (with $\nu\lrcorner h=0$ when $\partial M\neq\emptyset$), and again the spectrum does not change. Here, we point out that the first eigenvalue $\lambda^\alpha_{1}(M)$ of $\Delta^\alpha$ is not necessarily zero like for the usual Laplacian $\Delta^M$ as shown in \cite[Ex. 1]{Sh87}. This interesting property of the magnetic Laplacian was  characterized  by Shigekawa (see \cite[Prop. 3.1 and Thm. 4.2]{Sh87}) as follows.

\begin{theorem}[Shigekawa] \label{thm:shi}
 Let $(M^n,g)$ be a closed Riemannian manifold and
 $$ \mathfrak{B}_M = \left\{ \alpha_\tau := \frac{d^M\tau}{i\tau}: \tau\in C^\infty(M,\mathbb{S}^1) \right\}. $$
 Then the following are equivalent:
 \begin{itemize}
     \item[(a)] $\alpha \in \mathfrak{B}_M$,
     \item[(b)] $d^M \alpha = 0$ and $\int_c \alpha \in 2\pi \mathbb{Z}$ for all closed curves $c$ in $M$,
     \item[(c)] $\lambda_1^\alpha(M) = 0$.
 \end{itemize}
\end{theorem}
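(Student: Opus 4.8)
The plan is to establish the cyclic chain of implications (c) $\Rightarrow$ (a) $\Rightarrow$ (b) $\Rightarrow$ (c), which yields all three equivalences at once. The organizing observation, used throughout, is that on functions $\Delta^\alpha = \delta^\alpha d^\alpha$, so that $\langle \Delta^\alpha u, u\rangle_{L^2} = \|d^\alpha u\|_{L^2}^2 \ge 0$; in particular $\lambda_1^\alpha(M) \ge 0$ always, and $\lambda_1^\alpha(M) = 0$ holds precisely when there is a nonzero $u \in C^\infty(M,\CC)$ with $d^\alpha u = 0$, i.e.\ $d^M u = -\,i\alpha u$. Thus condition (c) is equivalent to the existence of a nontrivial "magnetic-parallel" section, and the whole theorem becomes a dictionary between such sections and $\mathbb{S}^1$-valued functions.

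For (c) $\Rightarrow$ (a), I would take such a $u \ne 0$ with $d^M u = -\,i\alpha u$. The key computation is that $|u|$ is constant: since $\alpha$ is real, $d^M|u|^2 = (d^M u)\bar u + u\,\overline{d^M u} = -\,i\alpha |u|^2 + i\alpha|u|^2 = 0$, and on the connected closed manifold $M$ this forces $|u| \equiv c > 0$. After rescaling we may assume $|u| \equiv 1$, so $\tau := \bar u \in C^\infty(M,\mathbb{S}^1)$, and then $d^M\tau = \overline{d^M u} = i\alpha\bar u = i\alpha\tau$, whence $\alpha = \tfrac{d^M\tau}{i\tau} = \alpha_\tau \in \mathfrak{B}_M$. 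For (a) $\Rightarrow$ (b), write $\alpha = \alpha_\tau$ with $\tau \in C^\infty(M,\mathbb{S}^1)$. Locally $\tau = e^{if}$ for a smooth real $f$, so $\alpha_\tau = d^M f$ is locally exact and hence closed, giving $d^M\alpha = 0$. For a closed curve $c$, the period $\int_c \alpha = \tfrac{1}{i}\int_c \tfrac{d^M\tau}{\tau}$ equals $2\pi$ times the winding number (degree) of the loop $\tau \circ c \colon \mathbb{S}^1 \to \mathbb{S}^1$, which is an integer; hence $\int_c\alpha \in 2\pi\mathbb{Z}$.

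Finally, for (b) $\Rightarrow$ (c) I would directly construct a nonvanishing solution of $d^\alpha u = 0$. Fix a basepoint $x_0$ and set $\tau(x) := \exp\!\bigl(i\int_\gamma \alpha\bigr)$, where $\gamma$ is any piecewise-smooth path from $x_0$ to $x$. Since $d^M\alpha = 0$, the integral $\int_\gamma\alpha$ depends on $\gamma$ only through its homotopy class, and passing between homotopy classes changes it by a period $\int_c\alpha \in 2\pi\mathbb{Z}$, so the exponential $\tau(x)$ is independent of $\gamma$ and $\tau \in C^\infty(M,\mathbb{S}^1)$ is well defined. By construction $d^M\tau = i\tau\alpha$, so $u := \bar\tau$ satisfies $d^M u = \overline{i\tau\alpha} = -\,i\bar\tau\alpha = -\,i\alpha u$, i.e.\ $d^\alpha u = 0$ with $u$ nowhere vanishing; therefore $\lambda_1^\alpha(M) = 0$.

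I expect the main obstacle to be precisely this last monodromy step: the integrality of \emph{all} periods is exactly the obstruction-vanishing condition needed to upgrade the multivalued primitive of the closed form $\alpha$ into a genuine single-valued $\mathbb{S}^1$-valued function $\tau$. The remaining two implications are essentially the easy computational checks $d^M\tau = i\tau\alpha$ (differentiating the $\mathbb{S}^1$-constraint) and the constancy of $|u|$; once these are in place the equivalences follow formally from the energy identity for $\Delta^\alpha = \delta^\alpha d^\alpha$.
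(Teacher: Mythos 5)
Your proof is correct and complete. Note that the paper itself contains no proof of this statement --- it is imported verbatim from Shigekawa \cite[Prop. 3.1 and Thm. 4.2]{Sh87} --- and your cyclic argument (c) $\Rightarrow$ (a) $\Rightarrow$ (b) $\Rightarrow$ (c) is exactly the standard line of reasoning behind that result: the energy identity $\langle \Delta^\alpha u,u\rangle_{L^2} = \norm{d^\alpha u}{L^2}^2$ identifies $\lambda_1^\alpha(M)=0$ with the existence of a magnetic-parallel section; such a section has constant modulus (your computation $d^M|u|^2=0$) and hence, after normalization, produces the gauge function $\tau=\bar u\in C^\infty(M,\mathbb{S}^1)$; the winding-number argument gives the integrality of the periods; and the integrality of \emph{all} periods is precisely the monodromy condition making $\tau(x)=\exp\bigl(i\int_{x_0}^x\alpha\bigr)$ single-valued, which closes the cycle. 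Two minor points you should make explicit: connectedness of $M$ is needed (and tacitly assumed) in (c) $\Rightarrow$ (a), since $d^M|u|^2=0$ only makes $|u|$ locally constant, so on a disconnected manifold a zero-eigenfunction could vanish identically on some components and $\bar u$ would fail to be $\mathbb{S}^1$-valued; and in (b) $\Rightarrow$ (c) the smoothness of $\tau$ deserves one line --- locally $\alpha=d^M f$ with $f$ smooth because $\alpha$ is closed, so $\tau$ is locally a constant times $e^{if}$.
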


Hence, when $\alpha$ cannot be gauged away, meaning that $\alpha$ does not belong to the set $\mathfrak{B}_M$, the first eigenvalue is necessarily positive. This gauge invariance can be described by the following: If $\alpha_\tau \in \mathfrak{B}_M$ for some $\tau\in C^\infty(M,\mathbb{S}^1)$, the Laplacians $\Delta^\alpha$ and $\Delta^{\alpha + \alpha_\tau}$ are unitarily equivalent, that is
 $$ \bar \tau \Delta^\alpha \tau = \Delta^{\alpha+\alpha_\tau}.$$
Thus  $\Delta^\alpha$ and $\Delta^{\alpha+\alpha_\tau}$ have the same spectrum as stated before. Now, the \emph{diamagnetic inequality} compares the first eigenvalue of $\Delta^\alpha$ to the one for the Laplacian $\Delta^M$ and says that
$$\lambda_1^\alpha(M)\geq \lambda_1(M),$$
with equality if and only if the magnetic potential $\alpha$ can be gauged away. When $M$ has no boundary, the diamagnetic inequality provides no information since $\lambda_1(M)=0$. However, when we consider manifolds with boundary and the magnetic Laplacian is associated to the Dirichlet or Robin boundary conditions, the diamagnetic inequality still holds and tells us that the first eigenvalue $\lambda_1^\alpha(M)$ is always positive.

\medskip

A simple estimate for the first eigenvalue of the magnetic Laplacian can be deduced straightforwardly from the min-max principle. Indeed,
when applying the Rayleigh quotient to a constant function, we get, after choosing $\delta^M\alpha=0$, that
$$\lambda^\alpha_1(M)\leq \frac{\int_M|\alpha|^2 d\mu_g}{{\rm Vol}(M)}\leq ||\alpha||^2_\infty.$$

Several papers have been devoted to estimating the first eigenvalue of the magnetic Laplacian, see, for example, \cite{BBC:03, BDP:16, L:96, HOOO:99, LLPP:15, LS:15, CS18, CS21, CS:21, CS:22, ELMP:16, CESIS-17}.
Among these results, we quote two of them \cite{ELMP:16}, \cite{CESIS-17} on closed Riemannian manifolds. 

The first result gives magnetic  Lichnerowicz-type estimates for the first two eigenvalues:

\begin{theorem}[see {\cite[Thm. 1.1]{ELMP:16}}]\label{thm:Lichnerowicz}
	Let $(M^n,g)$ be a closed Riemannian manifold of dimension $n\geq 2$ and $\alpha \in \Omega^1(M)$. If
	\begin{equation} \label{eq:Lichcond}
	\Ric^M \geq C>0\qquad \text{and}\qquad\lVert d^M\alpha \rVert_\infty\leq \left(1+2\sqrt{\frac{n-1}{n}}\right)^{-1}C,
	\end{equation}
	then we have
	\begin{equation}\label{eq:Lich}
		0\leq\lambda^\alpha_1(M)\leq a_-(C,\lVert d^M\alpha\rVert_\infty,n)\qquad\text{ and }\qquad\lambda^\alpha_2(M)\geq a_+(C,\lVert d^M\alpha\rVert_\infty,n),
	\end{equation}
	where
	\begin{equation*}
		a_{\pm}(C,A,n)=n\cdot
		\frac{ (C-A)\pm\sqrt{(C-A)^2-4(\frac{n-1}{n})A^2 }}{2(n-1)}.
	\end{equation*}
\end{theorem}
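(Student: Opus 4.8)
The plan is to run a magnetic Bochner–Lichnerowicz argument on the scalar operator $\Delta^\alpha=\delta^\alpha d^\alpha$, which is self-adjoint with real spectrum, and to convert it into a quadratic constraint on the eigenvalues. Write $A:=\lVert d^M\alpha\rVert_\infty$. Fix a complex eigenfunction $f$ with $\Delta^\alpha f=\lambda f$, normalised so that $\int_M\abs{f}^2\,d\mu_g=1$, and set $\omega:=d^\alpha f$; integration by parts gives $\int_M\abs{\omega}^2\,d\mu_g=\int_M\langle\Delta^\alpha f,f\rangle\,d\mu_g=\lambda$. I would then integrate the function case of the magnetic Weitzenb\"ock identity (cf.\ Theorem~\ref{thm:magboch}) over the closed manifold to obtain a Lichnerowicz-type $L^2$-identity
\[
\int_M\abs{\nabla^\alpha\omega}^2\,d\mu_g+\int_M\Ric(\omega,\bar\omega)\,d\mu_g=\int_M\abs{\Delta^\alpha f}^2\,d\mu_g+R_\alpha(f),
\]
where $\nabla^\alpha\omega=\nabla^\alpha d^\alpha f$ is the magnetic Hessian and $R_\alpha(f)$ collects the terms forced by the non-commutativity of $\nabla^\alpha$; concretely $R_\alpha(f)$ is built from $\Delta^\alpha\omega-d^\alpha\Delta^\alpha f=i\,\delta^\alpha(f\,d^M\alpha)$ together with the magnetic curvature endomorphism.

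The decisive structural point is that $\nabla^\alpha$ has curvature $i\,d^M\alpha$, so the magnetic Hessian is not symmetric: its antisymmetric part equals $\tfrac{i}{2}\,f\,d^M\alpha$, and since $\mathrm{tr}(\nabla^\alpha d^\alpha f)=-\Delta^\alpha f=-\lambda f$, the refined Kato inequality $\abs{T}^2\ge\tfrac1n\abs{\mathrm{tr}\,T}^2$ applied to the symmetric part yields
\[
\abs{\nabla^\alpha\omega}^2\ge\frac{\lambda^2}{n}\abs{f}^2+\frac14\abs{f}^2\abs{d^M\alpha}^2 .
\]
Inserting this together with $\Ric\ge C$ into the identity, bounding the indefinite cross-terms in $R_\alpha(f)$ in absolute value by $A\int_M\abs{\omega}^2=A\lambda$ and the remaining field term by $A^2$, and using $\int_M\abs{\omega}^2=\lambda$, I expect that completing the square produces the quadratic inequality
\[
\frac{n-1}{n}\,\lambda^2-(C-A)\,\lambda+A^2\ge 0 .
\]
The hypothesis $A\le\bigl(1+2\sqrt{(n-1)/n}\bigr)^{-1}C$ is exactly the condition that this quadratic has real, nonnegative roots, and these roots are $a_\pm(C,A,n)$; hence no eigenvalue of $\Delta^\alpha$ lies in the open interval $(a_-,a_+)$.

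It remains to locate the ground state below this gap. Since $\Ric\ge C>0$ forces $b_1(M)=0$, the harmonic part of $\alpha$ vanishes and, as recalled in Section~\ref{sec:maglapfunc}, we may gauge $\alpha$ to be co-exact (in particular $\delta^M\alpha=0$) without changing the spectrum. Testing the Rayleigh quotient of $\Delta^\alpha$ with the constant function gives $\lambda_1^\alpha(M)\le\int_M\abs{\alpha}^2\,d\mu_g/\vol(M)$, while the Bochner estimate for co-closed $1$-forms, $\int_M\langle\delta^M d^M\beta,\beta\rangle\,d\mu_g\ge C\int_M\abs{\beta}^2\,d\mu_g$ whenever $\delta^M\beta=0$, applied to $\beta=\alpha$ gives $\int_M\abs{\alpha}^2\,d\mu_g\le C^{-1}\int_M\abs{d^M\alpha}^2\,d\mu_g\le C^{-1}A^2\vol(M)$. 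Since one checks $A^2/C<a_-$, this yields $\lambda_1^\alpha(M)\le a_-$; combined with the gap, this forces $\lambda_2^\alpha(M)\ge a_+$ provided at most one eigenvalue lies in $[0,a_-]$.

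I expect the difficulty to be twofold. First, the sharp bookkeeping of $R_\alpha(f)$: one must track the contributions of $i\,\delta^\alpha(f\,d^M\alpha)$ and of the magnetic curvature endomorphism precisely enough that the linear- and quadratic-in-$A$ terms assemble into exactly $-(C-A)\lambda+A^2$, rather than into a quadratic of the correct shape but with the wrong constants. Second, and more delicate, is the separation of $\lambda_1$ from $\lambda_2$: the gap inequality alone does not forbid two eigenvalues below $a_-$, so one must show that the spectral subspace lying below the gap is one-dimensional. The diamagnetic inequality controls $\abs{u}$ but not its mean, so a naive orthogonality argument does not close; I expect this simplicity-below-the-gap statement, rather than the Bochner estimate itself, to be the genuinely hard part.
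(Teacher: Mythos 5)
The paper never proves Theorem \ref{thm:Lichnerowicz}: it is quoted verbatim from \cite{ELMP:16}, so your proposal has to be measured against the argument there, which is indeed the integrated magnetic Bochner technique you outline. Two of your three steps are essentially fine. For the gap inequality, your bookkeeping worry resolves favourably: using the paper's own Weitzenb\"ock formula (Theorem \ref{thm:magboch} with $p=1$, where $\mathcal{B}^{[1],\alpha}=\Ric^M+iA^\alpha$), a normalised eigenfunction $f$ with $\omega:=d^\alpha f$ satisfies $\delta^\alpha\omega=\lambda f$ and $d^\alpha\omega=(d^\alpha)^2f=if\,d^M\alpha$, hence
\begin{equation*}
\int_M |\nabla^\alpha\omega|^2\,d\mu_g+\int_M\langle(\Ric^M+iA^\alpha)\omega,\omega\rangle\,d\mu_g
=\int_M\bigl(|d^\alpha\omega|^2+|\delta^\alpha\omega|^2\bigr)\,d\mu_g
=\lambda^2+\int_M|f|^2|d^M\alpha|^2\,d\mu_g ,
\end{equation*}
and the trace inequality $|\nabla^\alpha\omega|^2\ge\frac1n|\delta^\alpha\omega|^2=\frac{\lambda^2}{n}|f|^2$, the bound $|\langle iA^\alpha\omega,\omega\rangle|\le A|\omega|^2$ (the case $p=1$ of \eqref{eq:upperbounda}, since the operator norm of $A^\alpha$ is at most $|d^M\alpha|$) and $\Ric^M\ge C$ give $\frac{\lambda^2}{n}+(C-A)\lambda\le\lambda^2+A^2$, i.e.\ $\frac{n-1}{n}\lambda^2-(C-A)\lambda+A^2\ge0$ for \emph{every} eigenvalue $\lambda$ --- exactly the quadratic with roots $a_\pm$; your antisymmetric-part refinement is not needed (it would in fact improve the constant term to $A^2/2$). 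Your bound $\lambda_1^\alpha(M)\le A^2/C\le a_-$ is also correct: $\Ric\ge C>0$ kills $H^1(M)$, so gauging $\alpha$ to be co-exact is legitimate, and $A^2/C\le a_-$ follows from $a_-a_+=\frac{n}{n-1}A^2$ together with $a_+\le\frac{n}{n-1}(C-A)$.

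The genuine gap is the one you name yourself and leave open: nothing in the above forbids two eigenvalues (with multiplicity) in $[0,a_-]$, and without excluding that, $\lambda_2^\alpha(M)\ge a_+$ does not follow. The missing ingredient is not a pointwise or orthogonality argument (as you suspect, the diamagnetic inequality cannot do this) but a \emph{deformation} argument, which is how \cite{ELMP:16} closes the proof. Consider the family $\Delta^{t\alpha}$, $t\in[0,1]$. Since $\lVert d^M(t\alpha)\rVert_\infty=tA$ also satisfies \eqref{eq:Lichcond}, the quadratic gap inequality holds for every eigenvalue of every $\Delta^{t\alpha}$: no eigenvalue lies in the open interval $\bigl(a_-(C,tA,n),a_+(C,tA,n)\bigr)$, whose endpoints vary continuously in $t$ and remain real under \eqref{eq:Lichcond}. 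The family $\Delta^{t\alpha}$ is a self-adjoint holomorphic family of type (A) with compact resolvent, so each curve $t\mapsto\lambda_k^{t\alpha}(M)$ is continuous --- this is the same Kato argument \cite[Thm. VII.3.9]{Ka95} that the paper itself invokes in the proof of Corollary \ref{cor:diam}. At $t=0$ the classical Lichnerowicz theorem gives $\lambda_2^{0}(M)\ge\frac{n}{n-1}C=a_+(C,0,n)$, while $\lambda_1^{0}(M)=0=a_-(C,0,n)$. Now the two sets $\{t:\lambda_2^{t\alpha}(M)\ge a_+(C,tA,n)\}$ and $\{t:\lambda_2^{t\alpha}(M)\le a_-(C,tA,n)\}$ are closed and cover $[0,1]$; they can only meet at a parameter where $a_-=a_+$, which under \eqref{eq:Lichcond} can happen only at $t=1$ in the equality case, where the conclusion $\lambda_2^{\alpha}(M)=a_+=a_-$ holds anyway. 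Hence the second eigenvalue can never cross the forbidden interval, giving $\lambda_2^\alpha(M)\ge a_+$; the identical argument for the curve $t\mapsto\lambda_1^{t\alpha}(M)$ re-proves $\lambda_1^\alpha(M)\le a_-$ with no test function at all. Once this continuity-plus-classical-Lichnerowicz step is added, your parts 1 and 2 assemble into a complete proof.
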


The technique used to obtain this result is an integral Bochner-type formula which involves the magnetic Hessian that is associated to the magnetic covariant derivative $\nabla^\alpha$.  A related result to Theorem \ref{thm:Lichnerowicz} for the magnetic Laplacian with Robin boundary conditions on compact Riemannian manifolds $(M,g)$ with  smooth boundary was proved in \cite{HK:18}. In the setup of the above theorem, it is natural to ask whether the estimates are sharp for some $\alpha$ that is not gauged away. For this, we employ 
the example of the round sphere $\mathbb{S}^3$ where the magnetic field $\alpha$ is collinear to the Killing vector field that defines the Hopf fibration. We refer to Appendix \ref{sec:berger} for more details on the computation.

\begin{example}[Unit sphere $\mathbb{S}^3$ with $\alpha = t Y_2$]
  Let $(\mathbb{S}^3,g)$ be the unit sphere in $\mathbb{R}^4$ with standard metric $g$ of curvature $1$. We use the notation introduced in Appendix \ref{sec:berger}. Let $\alpha = t Y_2$ where $Y_2$ is the unit Killing vector field on $\mathbb{S}^3$.  Using \eqref{eq:exteriory2}, we obtain $d^M \alpha=2t\, Y_3 \wedge Y_4$ where $\{Y_2,Y_3,Y_4\}$ is an orthonormal frame of $T\mathbb{S}^3$
  and, therefore, $\Vert d^M \alpha \Vert_\infty = 2 t$. Since $\Ric^M=C=2$, condition \eqref{eq:Lichcond} is satisfied for $|t| \le \frac{\sqrt{3}}{\sqrt{3}+\sqrt{8}} = t_{\rm{max}} \approx 0.38$, and for $t \in [0,t_{\rm{max}}]$ we have, by \eqref{eq:Lich},
  \begin{multline*}
  \lambda_1^{\alpha}(\mathbb{S}^3) \le \frac{3}{2}\left[ (1-t) - \sqrt{(1-t)^2- \frac{8}{3}t^2} \right] \\ \le
  \frac{3}{2}\left[ (1-t) + \sqrt{(1-t)^2- \frac{8}{3}t^2} \right] \le \lambda_2^{\alpha}(\mathbb{S}^3). \end{multline*}
  On the other hand, we conclude from \eqref{eq:specmagS3} that $\lambda_1^{\alpha}(\mathbb{S}^3) = t^2$ and $\lambda_2^{ \alpha}(\mathbb{S}^3) = 3-2t+t^2$ for small $t \in [0,t_{\rm{max}}]$. The relations between these two smallest eigenvalues and their estimates for small $t > 0$ are illustrated in Figure \ref{fig:lichest}.

  \begin{figure}[h]
  \begin{center}
  \includegraphics[width=0.6 \textwidth]{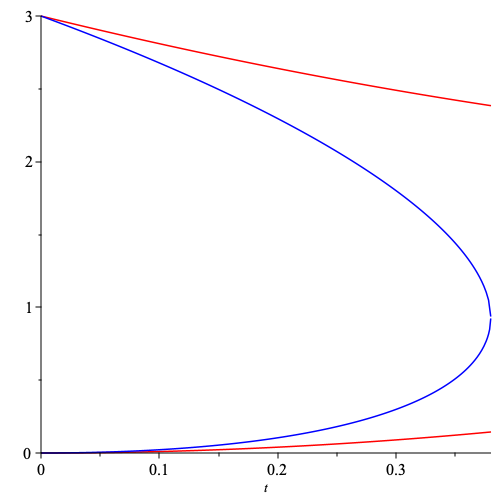}
  \caption{Eigenvalues $\lambda_1^{\alpha}(\mathbb{S}^3)$ and $\lambda_2^{\alpha}(\mathbb{S}^3)$ in red and upper and lower bounds in blue, as functions over $t \in [0,t_{\rm{max}}]$.} \label{fig:lichest}
  \end{center}
  \end{figure}
\end{example}

\FloatBarrier

\medskip

As we can see from Figure \ref{fig:lichest}, sharpness of the upper estimate of $\lambda_1^\alpha(\mathbb{S}^3)$ is lost 
(see the discussion after Lemma \ref{lem:gallotmeyer}).

The second result was given in \cite{CESIS-17} in the general setting of magnetic Schr\"odin\-ger operators $\Delta^\alpha + q$ with Neumann boundary conditions. For simplicity, we formulate it in the special case of a closed Riemannian manifold $(M^n,g)$ with vanishing potential $q=0$. We will return to this estimate later in Subsection \ref{subsec:CESIS}.

\begin{theorem}[{\cite[Thm. 2]{CESIS-17}}] \label{thm:cesis17}
  Let $(M^n,g)$ be a closed Riemannian manifold and let $\alpha \in \Omega^1(M)$ be of the form $\alpha = \delta^M \psi + h$ with $\psi \in \Omega^2(M)$ and $h$ a harmonic $1$-form.  Then,
  $$ \lambda_1^\alpha(M) \le \frac{1}{{\rm{vol}}(M)}\left( d(h,\mathfrak{L}_\ZZ)^2+ \frac{\Vert d^M \alpha \Vert_2^2}{\lambda_{1,1}''(M)}\right), $$
  where $\lambda_{1,1}''(M)$ is the first eigenvalue of the Hodge Laplacian $\Delta^M$ on co-exact $1$-forms, $\mathfrak{L}_\ZZ$ is the lattice of integer harmonic $1$-forms in $\Omega^1(M)$, and
  $$ d(h,\mathfrak{L}_\ZZ)^2 = \inf_{\eta \in \mathfrak{L}_\ZZ} \Vert h - \eta \Vert_2^2. $$
\end{theorem}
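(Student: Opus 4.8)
The plan is to combine the variational characterization of $\lambda_1^\alpha(M)$ with the gauge invariance recorded in Theorem~\ref{thm:shi}. Since $M$ is closed, the first magnetic eigenvalue admits the Rayleigh characterization
$$\lambda_1^\alpha(M) = \inf_{f \in C^\infty(M,\CC),\, f\neq 0} \frac{\int_M \abs{d^\alpha f}^2\, d\mu_g}{\int_M \abs{f}^2\, d\mu_g},$$
with $d^\alpha f = d^M f + i f\alpha$ on functions. The essential idea is to first strip off as much of the harmonic part $h$ as the gauge group allows, and then to test the resulting Rayleigh quotient against the simplest competitor, the constant function $f \equiv 1$.

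First I would fix an arbitrary integer harmonic form $\eta \in \mathfrak{L}_\ZZ$. By Theorem~\ref{thm:shi} such an $\eta$ lies in $\mathfrak{B}_M$ (it is closed with periods in $2\pi\ZZ$), so it can be gauged away: there is $\tau \in C^\infty(M,\mathbb{S}^1)$ with $\eta = \alpha_\tau$, and $\Delta^\alpha$ is unitarily equivalent to $\Delta^{\alpha-\eta}$, whence $\lambda_1^\alpha(M) = \lambda_1^{\alpha-\eta}(M)$ with $\alpha - \eta = \delta^M\psi + (h-\eta)$. Testing the Rayleigh quotient of $\Delta^{\alpha-\eta}$ with $f\equiv 1$ gives $d^{\alpha-\eta}1 = i(\alpha-\eta)$, so
$$\lambda_1^\alpha(M) \le \frac{1}{\vol(M)}\int_M \abs{\delta^M\psi + (h-\eta)}^2\, d\mu_g.$$
Since $\delta^M\psi$ is co-exact and $h-\eta$ is harmonic, the Hodge decomposition forces these two summands to be $L^2$-orthogonal, so the integral equals $\Vert\delta^M\psi\Vert_2^2 + \Vert h-\eta\Vert_2^2$. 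Taking the infimum over $\eta \in \mathfrak{L}_\ZZ$ then replaces the second term by $d(h,\mathfrak{L}_\ZZ)^2$.

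It remains to control $\Vert\delta^M\psi\Vert_2^2$ by the magnetic-field term. Writing $\omega = \delta^M\psi$, this form is co-closed, since $\delta^M\omega = (\delta^M)^2\psi = 0$, so on it the Hodge Laplacian reduces to $\Delta^M\omega = \delta^M d^M\omega$; moreover $d^M\omega = d^M(\delta^M\psi + h) = d^M\alpha$ because $h$ is closed. Expanding $\omega$ in an $L^2$-orthonormal eigenbasis of $\Delta^M$ restricted to co-exact $1$-forms, whose spectrum begins at $\lambda_{1,1}''(M)$, yields
$$\Vert d^M\alpha\Vert_2^2 = \Vert d^M\omega\Vert_2^2 = \langle \Delta^M\omega, \omega\rangle \ge \lambda_{1,1}''(M)\,\Vert\omega\Vert_2^2,$$
i.e.\ $\Vert\delta^M\psi\Vert_2^2 \le \Vert d^M\alpha\Vert_2^2 / \lambda_{1,1}''(M)$. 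Substituting this into the previous bound produces the claimed inequality.

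I expect the only genuinely delicate point to be the gauge step: one must confirm that every $\eta\in\mathfrak{L}_\ZZ$ is realized as some $\alpha_\tau\in\mathfrak{B}_M$ (this is precisely the equivalence (a)$\Leftrightarrow$(b) of Theorem~\ref{thm:shi}, together with the normalization fixing the integer lattice) and that gauging by $\tau$ shifts only the harmonic component, leaving $\delta^M\psi$ unchanged. Everything else is a direct application of the Hodge orthogonal decomposition and the spectral-gap estimate on co-exact forms; it is the constant test function that keeps the whole argument elementary.
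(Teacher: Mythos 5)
Your proof is correct and is essentially the paper's own argument: the paper establishes this statement as the $p=0$ case of its differential-form generalization (Theorem \ref{thm:CS}), where the test form is $u\,\omega_0$ with $u(x)=e^{i\int_{x_0}^x\eta}$, and your gauge step (realizing $\eta=\alpha_\tau\in\mathfrak{B}_M$ via Theorem \ref{thm:shi} and passing to $\Delta^{\alpha-\eta}$ tested against the constant function) is the identical computation, since multiplication by $u$ is precisely that unitary equivalence. The remaining ingredients — $L^2$-orthogonality of $\delta^M\psi$ and $h-\eta$ from the Hodge decomposition, and the spectral-gap bound $\Vert d^M\delta^M\psi\Vert_2^2\ge\lambda_{1,1}''(M)\,\Vert\delta^M\psi\Vert_2^2$ with $d^M\delta^M\psi=d^M\alpha$ — are exactly those in the paper's proof.
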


In order to check the sharpness of this inequality, we consider again the case of the round sphere with the magnetic field given by the Killing vector field.
\begin{example}[Unit sphere $\mathbb{S}^3$ with $\alpha = t Y_2$] Let  $(\mathbb{S}^3,g)$ be the unit round sphere in $\RR^4$ with standard metric $g$ of curvature $1$ and let $\alpha = t Y_2$. Since $H^1(\mathbb{S}^3) = 0$ and $\delta^M \alpha = 0$, $\alpha$ is co-exact and therefore of the form $\delta^M \psi$ for some $\psi \in \Omega^2(M)$. Moreover, we have from \cite[p. 37]{GM:75}, \cite{Paq79}
  that
  $\lambda_{1,1}''(\mathbb{S}^3) = 4$. Thus, Theorem \ref{thm:cesis17} yields
  $$ \lambda_1^\alpha(\mathbb{S}^3) \le \frac{1}{4 \rm{vol}(\mathbb{S}^3)} \int_{\mathbb{S}^3} |d^M\alpha|^2 d\mu_g = t^2, $$
  that is, the upper estimate of the first magnetic eigenvalue is sharp for this case.
\end{example}

\medskip

Finally, as we mention in the introduction, examples of closed Riemannian manifolds $(M^n,g)$ with non-trivial magnetic potential $\alpha \in \Omega^1(M)$ (that is, magnetic potential which cannot be gauged away) for which the full spectrum of the magnetic Laplacian $\Delta^\alpha$ can be explicitly given, are very scarce (see, for example, \cite{CS18, CS:22} for such computations).

\section{The magnetic Hodge Laplacian for differential forms}

In this section, we introduce the magnetic Hodge Laplacian for differential forms, prove a magnetic Bochner formula, and discuss its gauge invariance. Henceforth $(M^n,g)$ will denote an oriented $n$-dimensional Riemannian manifold and $\Omega^p(M)$ and $\Omega^p(M,\CC)$ will denote the spaces of real and complex differential $p$-forms for $0 \le p \le n$. The spaces of real and complex vector fields on $M$ are denoted by $\cX(M)$ and $\cX_\CC(M)$.
To simplify notation, we will often identify real and complex vector fields with real and complex 1-forms via the (complex-linear) musical isomorphisms. That is, $\Omega^1(M,\mathbb{C})\to \cX_\CC(M);\, \omega\mapsto \omega^\sharp$ given by $\omega(X)=\langle X,\overline{\omega^\sharp}\rangle$, where $\langle \cdot,\cdot\rangle$ stands for the Hermitian scalar product extended from the Riemannian metric $g$ to $\cX_\CC(M)$.

\subsection{The magnetic Hodge Laplacian}

Fix a smooth $1$-form $\alpha \in \Omega^1(M)$ (a magnetic potential) and consider the \emph{magnetic differential} on $\Omega^p(M,\mathbb{C})$, given by
$$d^\alpha:=d^M+i\alpha\wedge.$$
It is not difficult to check that the $L^2$-adjoint of $d^\alpha$ acting on complex differential forms (when $M$ is without boundary) w.r.t. the Hermitian inner product
$$
\int_M \langle \omega, \eta \rangle \, d\mu_g = \int_M *(\omega \wedge * \bar \eta)
\, d\mu_g
$$
is given by
$$\delta^\alpha:=\delta^M-i\alpha^\sharp\lrcorner,$$
where $\delta^M = (-1)^{n(p+1)+1} * d^M *$ is the formal adjoint of $d^M$ on $p$-forms (both extended complex linearly to complex differential forms) and the Hodge star operator is extended to a complex linear operator  $*: \Omega^p(M,\mathbb{C}) \to \Omega^{n-p}(M,\mathbb{C})$. Recall here that the interior product ``$\lrcorner$" is the pointwise adjoint of the wedge product ``$\wedge$". Both $d^\alpha$ and $\delta^\alpha$ are the differential and co-differential associated to the magnetic connection on differential forms $\nabla^\alpha_X:=\nabla^M_X+i\alpha(X)$ on $\Omega^p(M,\mathbb{C})$. That means we
have
\begin{equation}\label{eq:localddelta}
d^\alpha=\sum_{j=1}^n e_j^*\wedge \nabla^\alpha_{e_j}\quad\text{and}\quad \delta^\alpha=-\sum_{j=1}^n e_j\lrcorner \nabla^\alpha_{e_j},
\end{equation}
where $\{e_1,\ldots,e_n\}$ is a local orthonormal frame of $TM$. Now, we define the \emph{magnetic Hodge Laplacian} acting on $\Omega^p(M,\mathbb{C})$ as follows:
$$
\Delta^\alpha:=d^\alpha\delta^\alpha+\delta^\alpha d^\alpha.
$$
We first have the following observation:

\begin{lemma} \label{lem:dalphadeltaalpha}
On differential $p$-forms, we have $* d^\alpha = (-1)^{p+1} \delta^\alpha*$ and $* \delta^\alpha = (-1)^p d^\alpha* $.
\end{lemma}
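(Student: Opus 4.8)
The plan is to treat each of the two claimed identities by splitting the magnetic operators into their non-magnetic part and their purely algebraic (zeroth-order) magnetic part, namely $d^\alpha = d^M + i\,\alpha\wedge$ and $\delta^\alpha = \delta^M - i\,\alpha^\sharp\lrcorner$, and to handle the two pieces separately. For the non-magnetic pieces I would first record the classical Hodge-star relations $*\,d^M = (-1)^{p+1}\delta^M*$ and $*\,\delta^M = (-1)^p d^M*$ on $p$-forms. These follow directly from the given expression $\delta^M = (-1)^{n(p+1)+1}*d^M*$ together with $**=(-1)^{p(n-p)}\,\mathrm{id}$ on $p$-forms, after a short exponent computation modulo $2$ (using that $n(n+1)$ is even). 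With these in hand, the $d^M$/$\delta^M$ contributions on the two sides of each claimed identity already match, so everything reduces to the magnetic terms.

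The key remaining ingredient is the pair of pointwise algebraic identities
\begin{equation*}
*(\alpha\wedge\omega) = (-1)^p\,\alpha^\sharp\lrcorner(*\omega), \qquad *(\alpha^\sharp\lrcorner\,\omega) = (-1)^{p+1}\,\alpha\wedge(*\omega),
\end{equation*}
valid for every $p$-form $\omega$ and every real $1$-form $\alpha$. I would prove the first one pointwise in a local oriented orthonormal coframe $e^1,\dots,e^n$: since both sides are tensorial and $\RR$-linear in $\alpha$, it suffices to take $\alpha = e^1$ (so $\alpha^\sharp = e_1$) and test on basis $p$-forms $\omega = e^I$. The computation then splits into the two cases $1\in I$ and $1\notin I$; in the first case both sides vanish, and in the second case both sides equal a fixed multiple of $e^{(\{1\}\cup I)^c}$, and comparing the orientation signs (which differ precisely by moving the index $1$ past the $p$ indices of $I$) produces the factor $(-1)^p$. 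The second identity then follows from the first applied to the $(n-p)$-form $*\omega$, upon applying $*$ once more and simplifying the resulting power of $**$; alternatively it is proved by the same coframe argument.

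Finally I would assemble the pieces. For the first claim, $*\,d^\alpha\omega = *d^M\omega + i\,*(\alpha\wedge\omega) = (-1)^{p+1}\delta^M(*\omega) + i(-1)^p\,\alpha^\sharp\lrcorner(*\omega)$, while $(-1)^{p+1}\delta^\alpha(*\omega) = (-1)^{p+1}\delta^M(*\omega) - i(-1)^{p+1}\,\alpha^\sharp\lrcorner(*\omega)$; since $-(-1)^{p+1} = (-1)^p$, the two expressions coincide, giving $*d^\alpha = (-1)^{p+1}\delta^\alpha*$. The second claim is obtained in exactly the same way using the second algebraic identity. I expect no conceptual obstacle here; the only real hazard is the sign bookkeeping, since every step carries a $p$-dependent (and in the classical relations an $n$-dependent) sign, so the main care goes into verifying that the factors $(-1)^p$ coming from the algebraic identities combine correctly with the $\pm i$ in $d^\alpha,\delta^\alpha$ and with the classical Hodge signs.
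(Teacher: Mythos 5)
Your proposal is correct and follows essentially the same route as the paper: decompose $d^\alpha$ and $\delta^\alpha$ into their non-magnetic parts and the algebraic terms $i\,\alpha\wedge$ and $-i\,\alpha^\sharp\lrcorner$, then combine the classical identities $*d^M=(-1)^{p+1}\delta^M*$, $*\delta^M=(-1)^p d^M*$ with the pointwise relations $*(\alpha\wedge)=(-1)^p\alpha^\sharp\lrcorner\,*$ and $*(\alpha^\sharp\lrcorner)=(-1)^{p+1}\alpha\wedge*$. The only difference is that the paper simply cites these four facts while you also sketch their verification (coframe computation and the $**$ sign bookkeeping), which is a harmless elaboration.
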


\begin{proof}
The proof is  straightforward from the fact that $* d^M = (-1)^{p+1} \delta^M*$ and $*(\alpha\wedge)=(-1)^p\alpha^\sharp\lrcorner*$ on $p$-forms. Also, we have that $* \delta^M = (-1)^p d^M* $ and $* (\alpha^\sharp \lrcorner ) = (-1)^{p+1} \alpha \wedge * $.
\end{proof}

The following is an immediate consequence of Lemma \ref{lem:dalphadeltaalpha} above.

\begin{corollary} \label{cor:maglapstar}
The magnetic Hodge Laplacian $\Delta^\alpha$ commutes with the Hodge star operator.
\end{corollary}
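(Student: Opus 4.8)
The plan is to show that $\Delta^\alpha * = * \Delta^\alpha$ on $\Omega^p(M,\CC)$ by direct substitution of the two commutation relations from Lemma \ref{lem:dalphadeltaalpha}, keeping careful track of the degree-dependent signs. The key observation is that both halves of $\Delta^\alpha = d^\alpha \delta^\alpha + \delta^\alpha d^\alpha$ preserve the $p$-degree, so I can compute how $*$ moves past each composite factor and verify that the accumulated signs cancel.

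First I would compute $* \, d^\alpha \delta^\alpha$ acting on a $p$-form. Since $\delta^\alpha$ sends a $p$-form to a $(p-1)$-form, I apply the relation $* \delta^\alpha = (-1)^p d^\alpha *$ on $p$-forms to get $* d^\alpha \delta^\alpha = (-1)^p (* d^\alpha) (\delta^\alpha *)$; then, because $\delta^\alpha *$ lands in degree $n-p+1$ \dots more cleanly, I would slide the $*$ all the way to the right in two steps. Concretely, on $p$-forms $* \delta^\alpha = (-1)^p d^\alpha *$, and on the resulting $(n-p+1)$-forms the other relation $* d^\alpha = (-1)^{(n-p+1)+1}\delta^\alpha *$ applies. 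The neatest route is to treat each term symmetrically: I would show $* (d^\alpha \delta^\alpha) = (\delta^\alpha d^\alpha) *$ and $* (\delta^\alpha d^\alpha) = (d^\alpha \delta^\alpha) *$, so that the two commutations swap the two terms of the Laplacian and their sum is preserved. Adding the two identities then gives $* \Delta^\alpha = \Delta^\alpha *$ directly.

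To establish $* d^\alpha \delta^\alpha = \delta^\alpha d^\alpha *$, I apply Lemma \ref{lem:dalphadeltaalpha} twice: first $* d^\alpha = (-1)^{q+1} \delta^\alpha *$ where $q$ is the degree on which $d^\alpha$ acts, and then $* \delta^\alpha = (-1)^r d^\alpha *$ for the appropriate degree $r$. Tracking that $\delta^\alpha$ first lowers degree from $p$ to $p-1$ and $d^\alpha$ then raises it to $p$, the two sign factors combine as $(-1)^{p} \cdot (-1)^{(p-1)+1} = (-1)^{2p} = 1$, confirming the claimed identity; the companion identity for $\delta^\alpha d^\alpha$ follows by an identical sign count. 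I expect the only real obstacle here to be bookkeeping: ensuring I use the correct degree in each application of the lemma and that the signs genuinely cancel rather than silently relying on a miscount. Since both $d^\alpha \delta^\alpha$ and $\delta^\alpha d^\alpha$ map $p$-forms to $p$-forms, no dimension-dependent $n$ survives in the final sign, which is the reassuring consistency check I would use to confirm the computation.
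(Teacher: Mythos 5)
Your proof is correct and is essentially the paper's own argument: both rest on applying Lemma \ref{lem:dalphadeltaalpha} twice to each half of $\Delta^\alpha$, with the degree-tracked signs $(-1)^{p}\cdot(-1)^{p}$ and $(-1)^{p+1}\cdot(-1)^{p+1}$ cancelling so that conjugation by $*$ swaps $d^\alpha\delta^\alpha$ and $\delta^\alpha d^\alpha$ (the paper just slides $*$ in the opposite direction, computing $\Delta^\alpha *$ rather than $*\Delta^\alpha$). The abandoned false start in your second paragraph is immaterial but sloppy --- the identity $* d^\alpha \delta^\alpha = (-1)^p (* d^\alpha)(\delta^\alpha *)$ cannot hold even on degree grounds, and $\delta^\alpha *$ on $p$-forms lands in degree $n-p-1$, not $n-p+1$ --- whereas your final two-step computation supersedes it and is sound.
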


\begin{proof}
Indeed, on $p$-forms, we have
\begin{eqnarray*}
\Delta^\alpha*&=&(d^\alpha\delta^\alpha+\delta^\alpha d^\alpha) *\\
&=&(-1)^{p+1}d^\alpha*d^\alpha+(-1)^p\delta^\alpha*\delta^\alpha\\
&=&*(\delta^\alpha d^\alpha+d^\alpha\delta^\alpha)=*\Delta^\alpha.
\end{eqnarray*}
\end{proof}

The magnetic Laplacian $\Delta^\alpha$ has the same principal symbol as the Hodge Laplacian $\Delta^M$ (see Equation \eqref{eq:deltaalphaforms} in the next section), since it differs by lower order terms. Therefore, it is an elliptic, essentially self-adjoint operator acting on smooth complex forms on a closed oriented Riemannian manifold or acting on smooth complex forms with Dirichlet boundary condition on an oriented Riemannian manifold with boundary (see Subsection \ref{subsec:greensformula} below). Therefore, $\Delta^\alpha$ has a discrete spectrum consisting of nonnegative eigenvalues $( \lambda_{j,p}^\alpha(M) )_{j \in \NN}$, denoted in ascending order with multiplicities. Moreover, as for the usual Hodge Laplacian, its spectrum  on $p$-forms is the same as the one on $(n-p)$-forms and the first eigenvalue is characterized by
\begin{equation} \label{eq:minmax}
\lambda_{1,p}^\alpha(M) ={\rm inf} \left\{ \frac{\int_M (|d^\alpha \omega|^2 + |\delta^\alpha \omega|^2) d\mu_g}{\int_M |\omega|^2 d\mu_g} \right\},
\end{equation}
where $\omega$ runs over all smooth $p$-forms with $\omega\vert_{\partial M} = 0$, if $\partial M \neq \emptyset$.

We also note that the differential $d^\alpha$ does not satisfy the crucial property $d^\alpha \circ d^\alpha = 0$ to introduce cohomology groups. In fact, we have
\begin{equation}\label{eq:dalpha2}
     (d^\alpha)^2 = i d^M\alpha \wedge
\end{equation}
where $d^M\alpha \in \Omega^2(M)$ is the magnetic field. We
could, however, still define magnetic Betti numbers  
via
$$ b_j^\alpha(M) = \dim {\rm Ker}(\Delta^{\alpha}|_{\Omega^j(M,\mathbb{C})}). $$
Corollary \ref{cor:maglapstar} implies that $b_j^\alpha(M) = b_{n-j}^\alpha(M)$. 
Moreover, we have $b_0^\alpha(M) = b_n^\alpha(M) = 0$ for any magnetic potential $\alpha$ that cannot be gauged away, that is $\alpha\notin \mathfrak{B}_M$, by the diamagnetic inequality. In Theorem \ref{thm:shiforms}, we investigate the existence of closed Riemannian manifolds $(M^n,g)$ with a magnetic potential $\alpha$ that cannot be gauged away, for which some of the corresponding magnetic Betti numbers $b_k^\alpha(M)$, $1 \le k \le n-1$, are non-zero.

\subsection{A magnetic Bochner formula}

Recall that the Hodge Laplacian $\Delta^M:=d^M\delta^M+\delta^M d^M$ is related to
the Bochner Laplacian on $M$ via a curvature term  by the {\it Bochner-Weitzenb\"ock} formula. Namely, we have (see, e.g, \cite[Thm. 7.4.5]{Pet98} or \cite[p. 14]{Wu17})
\begin{equation} \label{eq:bochner}
\Delta^M=\nabla^*\nabla+\mathcal{B}^{[p]},
\end{equation}
where $\mathcal{B}^{[p]}$, called the \emph{Bochner operator}, is a symmetric endomorphism on $\Omega^p(M)$ given by $\mathcal{B}^{[p]}=\sum_{j,k=1}^n e_k^*\wedge e_j\lrcorner  R^M(e_j,e_k)$. Here $R^M$ is the curvature operator associated to the Levi-Civita connection $\nabla^M$ which is given by $R^M(X,Y)=[\nabla^M_X,\nabla^M_Y]-\nabla^M_{[X,Y]}$ for all $X,Y\in \cX(M)$ and $\{e_1,\ldots, e_n\}$ is a local orthonormal frame of $TM$. The Bochner Laplacian $\nabla^*\nabla$ is given by
$$\nabla^*\nabla=-\sum_{j=1}^n \nabla^M_{e_j}\nabla_{e_j}^M+\sum_{j=1}^n\nabla^M_{\nabla^M_{e_j}e_j}.$$

In the following, we derive a similar magnetic Bochner-Weitzenb\"ock formula for $\Delta^\alpha$, which will provide a relation between the Hodge Laplacians $\Delta^\alpha$ and $\Delta^M$. For this, we recall the following definition. Given a Euclidean vector space $V$ of dimension $n$ and an endomorphism $A: V \to V$, there exists a canonical extension $A^{[p]}$ of $A$ on the set of differential $p$-forms ($p\geq 1$) given by $A^{[p]}: \Lambda^p(V^*) \to \Lambda^p(V^*)$ via
\begin{equation}\label{eq:extension}
  (A^{[p]} \omega)(v_1,\dots,v_p) = \sum_{j=1}^p \omega(v_1,\dots,A v_j, \dots, v_p),
\end{equation}
for $v_1,\ldots,v_p \in V$. By convention, we take $A^{[0]}=0$. One can easily show from the definition that the endomorphism $A^{[p]}$ can be written in terms of $A$ as
 \begin{equation}\label{eq:extensionexpression}
 A^{[p]} = \sum_{j=1}^n e_j^*\wedge (A(e_j)\lrcorner),
 \end{equation}
where $\{e_1,\ldots,e_n\}$ is an orthonormal frame of $V$. If $A$ is a symmetric (resp. skew-symmetric) endomorphsim on $V$, then so is $A^{[p]}$ on $\Lambda^p(V^*)$. In this case, if we denote the eigenvalues of $A$ by $\eta_1\leq \ldots\leq \eta_n$, then we have the following estimates. For any $\omega\in \Lambda^p(V^*)$
\begin{equation}\label{eq:upperbounda}
\langle A^{[p]}\omega,\omega\rangle\geq \sigma_p |\omega|^2 \quad\text{and}\quad \langle A^{[p]}\omega,\omega\rangle\leq (\sigma_n-\sigma_{n-p}) |\omega|^2 \le p \Vert A \Vert \cdot |\omega |^2,
\end{equation}
where $\sigma_p:=\eta_1+\ldots+\eta_p$ are called the $p$-eigenvalues of $A^{[p]}$
and $\Vert A \Vert$ is the operator norm of $A$. In order to state the magnetic Bochner-Weitzenb\"ock formula, we introduce the following \emph{magnetic Bochner operator} on $\Omega^p(M,\CC)$:
$$ \mathcal{B}^{[p],\alpha}:=\sum_{j,k=1}^n e_k^*\wedge \left(e_j\lrcorner  R^\alpha(e_j,e_k)\right), $$
where as before $\{e_i\}_{i=1,\ldots,n}$ is a local orthonormal frame of $TM$. Here $R^\alpha$ is the curvature operator associated to the magnetic covariant derivative $\nabla^\alpha$, that is
$$R^\alpha(X,Y)Z = \nabla^\alpha_X \nabla^\alpha_Y Z - \nabla^\alpha_Y \nabla^\alpha_X Z - \nabla^\alpha_{[X,Y]} Z $$
for $X,Y,Z\in \cX_\mathbb{C}(M)$. Now, we express the magnetic Bochner operator in terms of the usual one by the following lemma.

\begin{lemma} \label{lem:magneticbochner} On the set of complex differential $p$-forms, the magnetic Bochner operator $\mathcal{B}^{[p],\alpha}$ is equal to
$$\mathcal{B}^{[p],\alpha}=\mathcal{B}^{[p]}-iA^{[p],\alpha},$$
where $A^{[p],\alpha}$ is the canonical extension to complex $p$-forms of the skew-symmetric endomorphism $A^\alpha$ on $TM$ given by $A^\alpha(X) = (X \lrcorner d^M\alpha)^\sharp$ for any vector field $X$ on $M$.
\end{lemma}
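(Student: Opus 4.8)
The plan is to reduce the lemma to a single pointwise identity relating the curvature operators $R^\alpha$ and $R^M$, and then to substitute this identity into the definition of $\mathcal{B}^{[p],\alpha}$. The guiding idea is that the magnetic connection $\nabla^\alpha_X = \nabla^M_X + i\alpha(X)$ is obtained from the Levi-Civita connection by adding a scalar term (equivalently, it is the tensor product of $\nabla^M$ with the connection $d^M + i\alpha$ on the trivial line bundle), so its curvature ought to differ from $R^M$ only by a scalar $2$-form. Concretely, the key claim I would establish first is the curvature identity
\[
R^\alpha(X,Y) = R^M(X,Y) + i\, d^M\alpha(X,Y)\,\mathrm{Id},
\]
valid on complex vector fields $X,Y,Z \in \cX_\mathbb{C}(M)$ and, by the same computation, on complex $p$-forms, the correction being multiplication by the scalar $i\, d^M\alpha(X,Y)$.

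To prove the curvature identity I would expand $R^\alpha(X,Y) = \nabla^\alpha_X\nabla^\alpha_Y - \nabla^\alpha_Y\nabla^\alpha_X - \nabla^\alpha_{[X,Y]}$ by inserting $\nabla^\alpha_X = \nabla^M_X + i\alpha(X)$. The purely Riemannian terms reassemble into $R^M(X,Y)$. The mixed terms $i\alpha(X)\nabla^M_Y$ and $i\alpha(Y)\nabla^M_X$ cancel pairwise, and so do the quadratic scalar terms $-\alpha(X)\alpha(Y)$ and $-\alpha(Y)\alpha(X)$, leaving only the scalar contribution $i\bigl(X(\alpha(Y)) - Y(\alpha(X)) - \alpha([X,Y])\bigr)$. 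Recognising this bracket as $d^M\alpha(X,Y)$ via the intrinsic formula for the exterior derivative of a $1$-form yields the identity. Since the scalar $i\alpha(X)$ acts in the same way on every tensor bundle associated to $TM$, the identical cancellation argument gives the same correction $i\, d^M\alpha(X,Y)$ for the induced action on $p$-forms.

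With the curvature identity in hand, substituting $R^\alpha(e_j,e_k)\omega = R^M(e_j,e_k)\omega + i\, d^M\alpha(e_j,e_k)\,\omega$ into the definition of the magnetic Bochner operator splits it as $\mathcal{B}^{[p],\alpha}\omega = \sum_{j,k} e_k^*\wedge\bigl(e_j\lrcorner R^M(e_j,e_k)\omega\bigr) + i\sum_{j,k} d^M\alpha(e_j,e_k)\, e_k^*\wedge(e_j\lrcorner \omega)$, the first sum being exactly $\mathcal{B}^{[p]}\omega$. It then remains to identify the second (correction) sum with $-iA^{[p],\alpha}\omega$. Using $\langle A^\alpha(e_j),e_k\rangle = d^M\alpha(e_j,e_k)$, that is $A^\alpha(e_j) = \sum_k d^M\alpha(e_j,e_k)\,e_k$, together with \eqref{eq:extensionexpression}, gives $A^{[p],\alpha} = \sum_{j,k} d^M\alpha(e_j,e_k)\, e_j^*\wedge(e_k\lrcorner)$. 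Relabelling $j \leftrightarrow k$ in the correction sum and invoking the antisymmetry $d^M\alpha(e_k,e_j) = -d^M\alpha(e_j,e_k)$ then turns it into $-iA^{[p],\alpha}$, which completes the proof.

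I expect the genuinely delicate point to be this final bookkeeping step rather than the curvature computation. The positions of $e_k^*\wedge$ and $e_j\lrcorner$ in $\mathcal{B}^{[p],\alpha}$ are reversed relative to the defining expression \eqref{eq:extensionexpression} for $A^{[p],\alpha}$, so matching the two requires simultaneously an index swap and the sign produced by the antisymmetry of $d^M\alpha$; one must check that these two effects combine to give precisely the factor $-i$ and not $+i$. The curvature step itself is routine once one trusts that the scalar correction $i\, d^M\alpha(X,Y)$ is independent of the bundle on which $\nabla^\alpha$ acts.
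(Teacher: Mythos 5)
Your proof is correct and follows essentially the same route as the paper: establish the curvature identity $R^\alpha(X,Y)\omega = R^M(X,Y)\omega + i\,d^M\alpha(X,Y)\,\omega$, substitute it into the definition of $\mathcal{B}^{[p],\alpha}$, and use the skew-symmetry of $d^M\alpha$ (via the index swap) to identify the correction term as $-iA^{[p],\alpha}$. The paper states these two steps without detail, so your write-up simply fills in the computations, including the sign bookkeeping, correctly.
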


\begin{proof} An easy computation shows that, for any $X,Y\in \cX(M)$ and
$\omega \in \Omega^p(M,\CC),$
$$R^\alpha(X,Y)\omega=R^M(X,Y)\omega+i(d^M\alpha)(X,Y)\omega.$$
The proof can then be deduced from the definition of $\mathcal{B}^{[p],\alpha}$ and the fact that $A^{\alpha}$ is skew-symmetric.
\end{proof}

We make the following observation. Using the identity on $p$-forms $*(X^\flat\wedge)=(-1)^pX\lrcorner*$ valid for any vector field $X$, one can easily show that $\mathcal{B}^{[p]}=(-1)^{p(n-p)}*\mathcal{B}^{[n-p]}*$ which gives that $\langle\mathcal{B}^{[p]}\cdot,\cdot\rangle=\langle\mathcal{B}^{[n-p]}*\cdot,*\cdot\rangle$ where $*$ is the Hodge star operator on $M$ and $\langle\cdot,\cdot\rangle$ is the pointwise Hermitian product on $\Omega^p(M,\mathbb{C})$. In the same way, and since the endomorphism $A^{\alpha}$ is skew-symmetric, one can also show that $A^{[p],\alpha}=(-1)^{p(n-p)}*A^{[n-p],\alpha}*$. Therefore, we deduce that $\mathcal{B}^{[p],\alpha}=(-1)^{p(n-p)}*\mathcal{B}^{[n-p],\alpha}*$ and, thus,
\begin{equation}\label{eq:bochneroperatorstar}
\langle\mathcal{B}^{[p],\alpha}\cdot,\cdot\rangle=\langle\mathcal{B}^{[n-p],\alpha}*\cdot,*\cdot\rangle
\end{equation}
on complex $p$-forms. Notice here that $iA^{[p],\alpha}$ is a symmetric endomorphism on $\Omega^p(M,\mathbb{C})$. Now we formulate the magnetic Bochner-Weitzenb\"ock formula.

\begin{theorem} [Magnetic Bochner-Weitzenb\"ock formula] \label{thm:magboch}
Let $(M^n,g)$ be a Riemannian manifold and $\alpha \in \Omega^1(M)$. Then we have
\begin{equation}\label{eq:bochnermagnetic}
\Delta^\alpha=(\nabla^\alpha)^*\nabla^\alpha+\mathcal{B}^{[p],\alpha},
\end{equation}
where $(\nabla^\alpha)^*\nabla^\alpha=-\sum_{j=1}^n\nabla^\alpha_{e_j}\nabla^\alpha_{e_j}+\sum_{j=1}^n\nabla^\alpha_{\nabla_{e_j}^M e_j}$. Moreover, we have
\begin{equation} \label{eq:deltaalphaforms}
\Delta^\alpha = \Delta^M-i A^{[p],\alpha}+ i (\delta^M \alpha) - 2 i \nabla^M_\alpha+ |\alpha|^2.
\end{equation}
\end{theorem}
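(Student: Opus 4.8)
The plan is to establish the Weitzenb\"ock identity \eqref{eq:bochnermagnetic} first by a purely formal (``universal'') computation, and then to read off \eqref{eq:deltaalphaforms} as a consequence. The guiding observation is that $\nabla^\alpha_X = \nabla^M_X + i\alpha(X)$ is a \emph{metric} connection on the Hermitian bundle $\Lambda^p T^*M \otimes \CC$ (the twist $i\alpha(X)$ is purely imaginary, so it cancels in $X\langle \omega,\eta\rangle$), and that $d^\alpha,\delta^\alpha$ are built from $\nabla^\alpha$ via \eqref{eq:localddelta} by exactly the same formulas that express $d^M,\delta^M$ through $\nabla^M$. Hence the standard derivation of the Bochner--Weitzenb\"ock formula should transfer verbatim with $\nabla^M$ replaced by $\nabla^\alpha$ and $R^M$ by $R^\alpha$, producing \eqref{eq:bochnermagnetic} with the curvature term $\mathcal{B}^{[p],\alpha}$ assembled from $R^\alpha$.

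Concretely, I would fix $p_0\in M$ and a $\nabla^M$-geodesic orthonormal frame $\{e_j\}$ (so that $\nabla^M_{e_j}e_k(p_0)=0$ and $[e_j,e_k](p_0)=0$), and compute $\Delta^\alpha = d^\alpha\delta^\alpha + \delta^\alpha d^\alpha$ at $p_0$ using \eqref{eq:localddelta}. The first point is that, because $i\alpha(e_j)$ is a scalar, $\nabla^\alpha_{e_j}$ commutes at $p_0$ with both $e_k\lrcorner$ and $e_k^*\wedge$; this lets me pull the two outer wedge/contraction factors past the inner covariant derivative. Applying the Clifford relation $e_j^*\wedge e_k\lrcorner + e_k\lrcorner e_j^*\wedge = \delta_{jk}$ then splits the result into a diagonal piece $-\sum_j \nabla^\alpha_{e_j}\nabla^\alpha_{e_j}$, which at $p_0$ equals $(\nabla^\alpha)^*\nabla^\alpha$, and an off-diagonal piece that regroups into $\sum_{j,k} e_k^*\wedge\big(e_j\lrcorner[\nabla^\alpha_{e_j},\nabla^\alpha_{e_k}]\big)$. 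Since $[e_j,e_k](p_0)=0$, the commutator is $R^\alpha(e_j,e_k)$, and by definition this piece is precisely $\mathcal{B}^{[p],\alpha}$, which yields \eqref{eq:bochnermagnetic}.

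To deduce \eqref{eq:deltaalphaforms} I would then expand the magnetic Bochner Laplacian in terms of the Riemannian one. Writing $\nabla^\alpha_{e_j}\nabla^\alpha_{e_j}=(\nabla^M_{e_j}+i\alpha(e_j))(\nabla^M_{e_j}+i\alpha(e_j))$ at $p_0$ and summing, the cross terms give $-2i\nabla^M_{\alpha^\sharp}$, the derivative of the scalar gives $+i(\delta^M\alpha)$ (using $\sum_j(\nabla^M_{e_j}\alpha)(e_j)=-\delta^M\alpha$), and the square gives $|\alpha|^2$, so that $(\nabla^\alpha)^*\nabla^\alpha = \nabla^*\nabla + i(\delta^M\alpha) - 2i\nabla^M_{\alpha^\sharp} + |\alpha|^2$ (here $\nabla^M_\alpha$ in \eqref{eq:deltaalphaforms} denotes $\nabla^M_{\alpha^\sharp}$). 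Substituting the classical formula \eqref{eq:bochner} in the form $\nabla^*\nabla = \Delta^M - \mathcal{B}^{[p]}$ together with Lemma \ref{lem:magneticbochner}, namely $\mathcal{B}^{[p],\alpha}=\mathcal{B}^{[p]}-iA^{[p],\alpha}$, into \eqref{eq:bochnermagnetic} makes the $\mathcal{B}^{[p]}$ terms cancel and leaves exactly \eqref{eq:deltaalphaforms}.

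The main obstacle is justifying rigorously that the classical Weitzenb\"ock derivation transfers to $\nabla^\alpha$, since $\nabla^\alpha$ is \emph{not} the connection induced on $\Lambda^p T^*M$ from a connection on $T^*M$: it is a scalar twist of $\nabla^M$ and so fails to be a derivation over $\wedge$. The point I would have to argue carefully is that the derivation uses only three ingredients, all of which survive the twist, namely metric compatibility of $\nabla^\alpha$, the Clifford relation, and the commutation of $\nabla^\alpha_{e_j}$ with $e_k\lrcorner$ and $e_k^*\wedge$ at the center of a $\nabla^M$-normal frame (valid because the twist is by a scalar and the $\nabla^M$-Christoffel symbols vanish at $p_0$). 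A secondary, purely bookkeeping, hurdle is matching indices and signs so that the regrouped curvature term agrees with the stated convention $\mathcal{B}^{[p],\alpha}=\sum_{j,k}e_k^*\wedge(e_j\lrcorner R^\alpha(e_j,e_k))$. Alternatively, one can sidestep the transfer issue altogether by expanding $d^\alpha\delta^\alpha+\delta^\alpha d^\alpha$ directly from $d^\alpha=d^M+i\alpha\wedge$ and $\delta^\alpha=\delta^M-i\alpha^\sharp\lrcorner$, which produces \eqref{eq:deltaalphaforms} after using Cartan's formula and the torsion-free identity $d^M\alpha(X,Y)=(\nabla^M_X\alpha)(Y)-(\nabla^M_Y\alpha)(X)$, and then obtaining \eqref{eq:bochnermagnetic} in reverse via the same substitution.
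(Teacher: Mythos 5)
Your proposal is correct and follows essentially the same route as the paper: compute $d^\alpha\delta^\alpha+\delta^\alpha d^\alpha$ at the center of a $\nabla^M$-normal frame using \eqref{eq:localddelta}, exploit that the scalar twist lets $\nabla^\alpha_{e_j}$ pass wedge and contraction there, apply the Clifford-type relation to split off $-\sum_j\nabla^\alpha_{e_j}\nabla^\alpha_{e_j}$ and regroup the remainder into $\sum_{j,k}e_k^*\wedge(e_j\lrcorner R^\alpha(e_j,e_k))=\mathcal{B}^{[p],\alpha}$. Your derivation of \eqref{eq:deltaalphaforms} — expanding $(\nabla^\alpha)^*\nabla^\alpha=\nabla^*\nabla+i(\delta^M\alpha)-2i\nabla^M_\alpha+|\alpha|^2$ and combining with \eqref{eq:bochner} and Lemma \ref{lem:magneticbochner} — is also exactly the paper's argument.
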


\begin{proof} The proof follows the same computations as for the Hodge Laplacian $\Delta^M$. For this, we use the expressions of $d^\alpha$ and $\delta^\alpha$ in \eqref{eq:localddelta} on an orthonormal frame $\{e_j\}_{j=1}^n$ on $TM$ chosen in a way that $\nabla^M e_j=0$ at some point $x\in M$. By the fact that, for all $X,Y\in \cX_{\mathbb{C}}(M)$, we have $\nabla^\alpha_X(Y\wedge\cdot)=(\nabla^M_X Y)\wedge\cdot+Y\wedge\nabla^\alpha_X\cdot$, which can be proven by a straightforward computation (the same relation holds for the interior product), we can write at $x \in M$:
\begin{eqnarray*}
\Delta^\alpha&=&d^\alpha\delta^\alpha+\delta^\alpha d^\alpha \\
&=&-\sum_{j,k=1}^n e_k^*\wedge \nabla^\alpha_{e_k}( e_j\lrcorner\nabla^\alpha_{e_j})-\sum_{j,k=1}^n e_j\lrcorner \nabla^\alpha_{e_j}( e_k^*\wedge\nabla^\alpha_{e_k})\\
&=&-\sum_{j,k=1}^n e_k^*\wedge ( e_j\lrcorner\nabla^\alpha_{e_k}\nabla^\alpha_{e_j})-\sum_{j,k=1}^n e_j\lrcorner ( e_k^*\wedge\nabla^\alpha_{e_j}\nabla^\alpha_{e_k}) \\
&=&-\sum_{j,k=1}^n e_k^*\wedge ( e_j\lrcorner\nabla^\alpha_{e_k}\nabla^\alpha_{e_j}) -\sum_{j=1}^n \nabla^\alpha_{e_j}\nabla^\alpha_{e_j}+ \sum_{j,k=1}^n e_k^*\wedge ( e_j\lrcorner\nabla^\alpha_{e_j}\nabla^\alpha_{e_k}) \\
&=&-\sum_{j=1}^n \nabla^\alpha_{e_j}\nabla^\alpha_{e_j}+\sum_{j,k=1}^n e_k^*\wedge (e_j\lrcorner R^\alpha(e_j,e_k)),
\end{eqnarray*}
where in the fourth equality we used the relation
$$
X \lrcorner ( \beta \wedge \cdot) = (X \lrcorner \beta) \wedge \cdot + (-1)^{\deg \beta}
\beta \wedge (X \lrcorner \cdot),
$$
for any differential form $\beta$. This shows that \eqref{eq:bochnermagnetic} holds. To obtain \eqref{eq:deltaalphaforms}, we just combine Lemma \ref{lem:magneticbochner} with the Bochner-Weitzenb\"ock formula \eqref{eq:bochner} and the fact that at $x \in M$
\begin{eqnarray*}
(\nabla^\alpha)^*\nabla^\alpha&=&-\sum_{j=1}^n\nabla^\alpha_{e_j}\nabla^\alpha_{e_j}\\
&=&-\sum_{j=1}^n\nabla^M_{e_j}(\nabla^M_{e_j}+i\alpha(e_j))-i\sum_{j=1}^n\alpha(e_j)(\nabla^M_{e_j}+i\alpha(e_j))\\
&=&\nabla^*\nabla+i\delta^M\alpha-2i\nabla^M_\alpha+|\alpha|^2.
\end{eqnarray*}
\end{proof}

\begin{remark}
Formula \eqref{eq:deltaalphaforms} is a generalization of the formula for the magnetic Laplacian for functions, given by
$$ \Delta^\alpha f = \delta^\alpha d ^\alpha f = \Delta^M f  + i (\delta^M \alpha) f - 2 i \alpha(f) + |\alpha|^2 f, $$
since $A^{[0],\alpha} = 0$.
\end{remark}

 Now, we will consider a particular case for the magnetic field $\alpha$. We will assume that it is a Killing 1-form, that is its corresponding vector field $\alpha^\sharp$ by the musical isomorphism is a Killing vector field. In this case, the standard Hodge Laplacian $\Delta^M$ commutes with $\mathcal{L}_\alpha$ since it commutes with all isometries. Indeed, we will show that, when $\alpha$ is of constant norm, the exterior differential $d^M$ and codifferential $\delta^M$ both commute with the magnetic Laplacian. Notice here that, in general, $d^\alpha$ and $\delta^\alpha$ do not commute with $\Delta^\alpha$ as a consequence of 
\eqref{eq:dalpha2} and even when $\alpha^\sharp$ is Killing.
We now show that Equation \eqref{eq:deltaalphaforms}
has the simpler expression \eqref{eq:relationkilling} in this case.
We also recall that for simplicity  $\alpha$ and $\alpha^\sharp$ are identified throughout the paper. 

\begin{proposition} \label{prop:deltaalphad} Let $(M^n,g)$ be a Riemannian manifold and let $\alpha$ be a Killing $1$-form, then
\begin{equation}\label{eq:relationkilling}
\Delta^\alpha=\Delta^M-2i\mathcal{L}_\alpha+|\alpha|^2,
\end{equation}
where $\mathcal{L}_\alpha$ is the Lie derivative in the direction of $\alpha$. In particular, $\mathcal{L}_\alpha\Delta^\alpha=\Delta^\alpha \mathcal{L}_\alpha$. Moreover, if the norm of $\alpha$ is constant, we have that $\Delta^\alpha d^M=d^M\Delta^\alpha$ and $\Delta^\alpha \delta^M=\delta^M\Delta^\alpha$ and, therefore, the magnetic Laplacian preserves the set of exact and co-exact forms.
\end{proposition}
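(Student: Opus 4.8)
The plan is to specialize the general identity \eqref{eq:deltaalphaforms} to the Killing case, where two of its terms simplify dramatically. First I would record the two structural consequences of $\alpha^\sharp$ being Killing: the endomorphism $B\colon Y \mapsto \nabla^M_Y \alpha^\sharp$ of $TM$ is skew-symmetric, and $\delta^M\alpha = -\dvg(\alpha^\sharp) = 0$. Skew-symmetry of $B$ is exactly the Killing equation $\langle\nabla^M_Y\alpha^\sharp,Z\rangle + \langle\nabla^M_Z\alpha^\sharp,Y\rangle = 0$, and $\delta^M\alpha=0$ is the standard fact that Killing fields are divergence-free.

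The key computation is to identify the skew-symmetric endomorphism $A^\alpha$ of Lemma \ref{lem:magneticbochner} with $B$ up to a factor. Writing $d^M\alpha(Y,Z) = \langle\nabla^M_Y\alpha^\sharp,Z\rangle - \langle\nabla^M_Z\alpha^\sharp,Y\rangle$ and using the Killing equation to turn the second term into the first, I get $d^M\alpha(Y,Z) = 2\langle\nabla^M_Y\alpha^\sharp,Z\rangle$, hence $A^\alpha(Y) = (Y\lrcorner d^M\alpha)^\sharp = 2\nabla^M_Y\alpha^\sharp = 2B(Y)$, so that $A^{[p],\alpha} = 2B^{[p]}$. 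Next I would invoke the standard Lie-derivative/covariant-derivative identity on $p$-forms, $\mathcal{L}_{\alpha^\sharp} = \nabla^M_{\alpha^\sharp} + B^{[p]}$, which follows from the torsion-freeness of $\nabla^M$ (for a $1$-form it reads $(\mathcal{L}_{\alpha^\sharp}\beta)(Y)=(\nabla^M_{\alpha^\sharp}\beta)(Y)+\beta(\nabla^M_Y\alpha^\sharp)$ and extends as a derivation). This lets me rewrite $B^{[p]} = \mathcal{L}_\alpha - \nabla^M_\alpha$. Substituting $\delta^M\alpha = 0$ and $A^{[p],\alpha} = 2(\mathcal{L}_\alpha - \nabla^M_\alpha)$ into \eqref{eq:deltaalphaforms}, the two occurrences of $\nabla^M_\alpha$ cancel and \eqref{eq:relationkilling} drops out.

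For the commutation statements I would read everything off \eqref{eq:relationkilling}. Since the flow of $\alpha^\sharp$ is isometric, $\mathcal{L}_\alpha$ commutes with $\Delta^M$; it trivially commutes with itself; and it commutes with multiplication by $|\alpha|^2$ because $\alpha^\sharp(|\alpha^\sharp|^2) = 2\langle\nabla^M_{\alpha^\sharp}\alpha^\sharp,\alpha^\sharp\rangle = 2\langle B\alpha^\sharp,\alpha^\sharp\rangle = 0$ by skew-symmetry of $B$. This yields $\mathcal{L}_\alpha\Delta^\alpha = \Delta^\alpha\mathcal{L}_\alpha$. For the final claim, assume $|\alpha|$ constant and check term by term that $d^M$ commutes with $\Delta^\alpha$: $d^M$ commutes with $\Delta^M$, with $\mathcal{L}_\alpha$ (exterior differentiation always commutes with Lie derivatives, by Cartan's formula), and with multiplication by $|\alpha|^2$ since $d^M(|\alpha|^2\omega) = d(|\alpha|^2)\wedge\omega + |\alpha|^2 d^M\omega = |\alpha|^2 d^M\omega$ when $|\alpha|^2$ is constant. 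The same three commutations hold for $\delta^M$: it commutes with $\Delta^M$, it commutes with $\mathcal{L}_\alpha$ because the isometric flow also preserves the Hodge star (and $\delta^M = \pm * d^M *$), and $\delta^M(|\alpha|^2\omega) = |\alpha|^2\delta^M\omega$ when $|\alpha|^2$ is constant. Hence $d^M$ and $\delta^M$ commute with $\Delta^\alpha$, so $\Delta^\alpha$ sends $d^M\eta \mapsto d^M(\Delta^\alpha\eta)$ and $\delta^M\eta \mapsto \delta^M(\Delta^\alpha\eta)$, preserving exact and co-exact forms. The one delicate point worth getting right is the factor of $2$ and the sign bookkeeping in identifying $A^\alpha$ with $2\nabla^M_\bullet\alpha^\sharp$ and in the Lie-derivative identity, since this is precisely where the Killing hypothesis does its work and where the cancellation of the $\nabla^M_\alpha$ terms hinges.
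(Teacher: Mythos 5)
Your proposal is correct and follows essentially the same route as the paper's proof: both hinge on the Killing identity $A^{\alpha}=2\nabla^{M}\alpha^{\sharp}$, the relation $\mathcal{L}_{\alpha}=\nabla^{M}_{\alpha}+T^{[p],\alpha}$ (which the paper cites from Savo's work and you rederive from torsion-freeness), substitution into \eqref{eq:deltaalphaforms} with $\delta^{M}\alpha=0$, and then term-by-term commutation arguments using $\alpha(|\alpha|^{2})=0$ and Cartan's formula. Your only deviation is cosmetic: you justify $[\mathcal{L}_{\alpha},*]=0$ by the isometric flow of $\alpha^{\sharp}$, whereas the paper deduces it from $A^{[p],\alpha}*=*A^{[n-p],\alpha}$ via \eqref{eq:liederivative}; both are valid.
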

\begin{proof}
The fact that $\alpha$ is Killing gives  $A^\alpha(X)=X\lrcorner d^M\alpha=2\nabla^M_X\alpha$ for any vector field $X\in TM$. Therefore, we get by \eqref{eq:extensionexpression} that
$$A^{[p],\alpha}=\sum_{j=1}^n e_j^*\wedge A^\alpha(e_j)\lrcorner=2\sum_{j=1}^n e_j^*\wedge \nabla^M_{e_j}\alpha\lrcorner=2T^{[p],\alpha},$$
where $T^{[p],X}$ is the canonical extension of the endomorphism $T^X=\nabla^M X$, for any $X$, given by the expression in \eqref{eq:extensionexpression}. Now, the identity  $\mathcal{L}_X=\nabla^M_X+T^{[p],X}$  valid on $p$-forms for any vector field $X$ on $TM$ \cite[Lem. 2.1]{S:09} allows us to deduce that
\begin{equation}\label{eq:liederivative}
2\mathcal{L}_\alpha=2\nabla^M_\alpha+A^{[p],\alpha}.
\end{equation}
Hence,  Equation \eqref{eq:deltaalphaforms} and the fact that $\delta^M \alpha=0$ since $\alpha$ is Killing gives the desired identity \eqref{eq:relationkilling}. 
In order to prove that $\mathcal{L}_\alpha$ commutes with $\Delta^\alpha$, we first use $\alpha(|\alpha|^2)=2g(\nabla^M_\alpha\alpha,\alpha)=0$ which is a consequence of the fact that $\alpha$ is Killing. Now, we compute, for any $p$-form $\omega$,
$$\mathcal{L}_\alpha(|\alpha|^2\cdot \omega)=\alpha(|\alpha|^2)\cdot \omega+|\alpha|^2\cdot \mathcal{L}_\alpha\omega=|\alpha|^2\cdot \mathcal{L}_\alpha\omega.$$
Thus, by the fact that $\mathcal{L}_\alpha$ commutes with the Laplacian $\Delta^M$, we get that $\mathcal{L}_\alpha\Delta^\alpha=\Delta^\alpha\mathcal{L}_\alpha$. Now we assume $\vert \alpha \vert$ is constant. It follows from Cartan's formula $\mathcal{L}_X \omega = X \lrcorner d^M \omega + d^M(X \lrcorner \omega)$ that $\mathcal{L}_X$ commutes with $d^M$ for any vector field $X$.
Since $d^M$ commutes with $\Delta^M$ and with $\mathcal{L}_\alpha$ as well as with multiplication by the constant $|\alpha|^2$, we deduce that $d^M$ commutes with $\Delta^\alpha$. That the codifferential  $\delta^M$ commutes with $\Delta^\alpha$ comes from the fact that $\delta^M$ commutes with $\Delta^M$ and with $\mathcal{L}_\alpha,$ which is a consequence of $\delta^M=\pm *d^M*$ and $\mathcal{L}_\alpha *=*\mathcal{L}_\alpha$ by Equation \eqref{eq:liederivative}. (Recall here that $A^{[p],\alpha}*=*A^{[n-p],\alpha}$). This finishes the proof.
\end{proof}

\begin{remark}
The relation \eqref{eq:liederivative} shows that for any complex differential forms $\omega$ and $\omega'$ on $M$, the following relation 
\begin{equation}\label{eq:adjointlie}
\langle\mathcal{L}_\alpha\omega,\omega'\rangle+\langle\omega,\mathcal{L}_\alpha\omega'\rangle=\alpha(\langle \omega,\omega'\rangle),
\end{equation}
holds pointwise when $\alpha$ is a Killing vector field (not necessarily of constant norm), since $A^{[p],\alpha}$ is skew-symmetric.
\end{remark}

When the magnetic potential $\alpha$ is Killing of constant norm on $(M^n,g)$, we have seen that the magnetic Laplacian $\Delta^{\alpha}$ preserves the set of exact and co-exact forms on $M$. In the following, we will assume $M$ to be compact and  will let $\lambda_{1,p}^{\alpha}(M)$ be the first non-negative eigenvalue of $\Delta^\alpha$ on differential $p$-forms  and $\lambda_{1,p}^{\alpha}(M)'$ (resp. $\lambda_{1,p}^{\alpha}(M)''$) be the first non-negative eigenvalue restricted to exact (resp. co-exact) $p$-forms. As in the standard case \cite{RS:11}, we can prove by  Hodge duality that $\lambda_{1,p}^{\alpha}( M)''=\lambda_{1,n-p}^{\alpha}(M)'$ and that 
$\lambda_{1,p}^{\alpha}(M)\leq {\rm min}(\lambda_{1,p}^{\alpha}( M)',\lambda_{1,p}^{\alpha}(M)'')$. Recall here that the magnetic Laplacian commutes with the Hodge star operator. However, we will see in the next proposition, that the relation  $\lambda_{1,p}^{\alpha}(M)={\rm min}(\lambda_{1,p}^{\alpha}( M)',\lambda_{1,p}^{\alpha}(M)'')$ that usually holds for the Laplacian $\Delta^M$ is not always true for $\Delta^\alpha$.

For the next proposition, we need the following well known result, which we present for completeness.

\begin{lemma}\label{harm}
Let $(M^n,g)$ be a compact manifold and let $X$ be a Killing vector field on $M$. For any harmonic form $\omega \in \Omega(M)$ we have
$$ \mathcal{L}_X \omega = 0. $$
\end{lemma}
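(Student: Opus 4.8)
The plan is to show that $\mathcal{L}_X \omega$ is simultaneously harmonic and exact, and then to invoke the elementary Hodge-theoretic fact that on a compact manifold the only form that is both harmonic and exact is zero. First I would recall that on a compact manifold a form $\omega$ is harmonic, i.e. $\Delta^M \omega = 0$, if and only if $d^M \omega = 0$ and $\delta^M \omega = 0$; this is immediate from $\langle \Delta^M \omega, \omega \rangle = \Vert d^M \omega \Vert_2^2 + \Vert \delta^M \omega \Vert_2^2$.

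The key step is to establish that $\mathcal{L}_X$ commutes with $\Delta^M$. The Lie derivative always commutes with $d^M$, which follows by differentiating Cartan's formula $\mathcal{L}_X = d^M \circ (X \lrcorner \cdot) + (X \lrcorner \cdot) \circ d^M$. Since $X$ is Killing, its flow acts by isometries, so $\mathcal{L}_X$ commutes with the Hodge star operator $*$, and hence with $\delta^M = \pm * d^M *$; equivalently, this can be read off from \eqref{eq:liederivative} together with the relation $A^{[p],\alpha} * = * A^{[n-p],\alpha}$ and the fact that $\nabla^M$ commutes with $*$. Consequently $\mathcal{L}_X$ commutes with $\Delta^M = d^M \delta^M + \delta^M d^M$, so $\mathcal{L}_X \omega$ is again harmonic whenever $\omega$ is.

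Finally, since $\omega$ is harmonic we have $d^M \omega = 0$, so Cartan's formula collapses to $\mathcal{L}_X \omega = d^M(X \lrcorner \omega)$, exhibiting $\mathcal{L}_X \omega$ as an \emph{exact} form. Setting $\eta := \mathcal{L}_X \omega = d^M \gamma$, which is also harmonic by the previous step, I would compute $\Vert \eta \Vert_2^2 = \langle d^M \gamma, \eta \rangle = \langle \gamma, \delta^M \eta \rangle = 0$, using $\delta^M \eta = 0$; hence $\eta = 0$. The only place where the Killing hypothesis is genuinely used is the commutation of $\mathcal{L}_X$ with $\delta^M$ (equivalently with $*$), so that is the step to state carefully; everything else is formal.
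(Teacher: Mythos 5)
Your proof is correct and follows essentially the same route as the paper: both arguments show that $\mathcal{L}_X\omega$ is exact (via Cartan's formula and $d^M\omega=0$) and harmonic (via the commutation of $\mathcal{L}_X$ with $d^M$ and, using the Killing hypothesis, with $\delta^M$), and then conclude that it vanishes. The only difference is cosmetic: where the paper invokes the Hodge decomposition to kill a form that is both exact and harmonic, you verify the same fact by an explicit integration by parts, $\Vert\eta\Vert_2^2=\langle\gamma,\delta^M\eta\rangle=0$.
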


\begin{proof}
  Let $\omega \in \Omega(M)$ be harmonic. Using Cartan's formula,
  we see that $\mathcal{L}_X \omega$ is exact. Moreover, since the Lie derivative of a Killing vector field commutes both with $d^M$ and $\delta^M$, the Lie derivative $\mathcal{L}_X\omega$ is both exact and harmonic. Therefore, by Hodge decomposition, ${\mathcal{L}}_X \omega = 0$.
\end{proof}

\begin{proposition}  Let $(M^n,g)$ be a compact Riemannian manifold and let $\alpha$ be a Killing $1$-form. The first non-negative eigenvalue $\lambda_{1,p}^{\alpha}(M)$ satisfies $\lambda_{1,p}^{\alpha}(M)=|\alpha|^2$ or $\lambda_{1,p}^{\alpha}(M)={\rm min}(\lambda_{1,p}^{\alpha}( M)',\lambda_{1,p}^{\alpha}(M)'')$ if $\alpha$ has constant norm. If $H^p(M)\neq 0$, then we get the estimate
$$\lambda^\alpha_{1,p}(M)\leq ||\alpha||^2_\infty.$$
\end{proposition}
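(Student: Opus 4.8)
The plan is to combine the Hodge decomposition of $\Omega^p(M,\CC)$ with the commutation properties of $\Delta^\alpha$ from Proposition \ref{prop:deltaalphad}. When $\alpha$ is Killing of constant norm, that proposition already tells us that $\Delta^\alpha$ commutes with $d^M$ and $\delta^M$, hence preserves the spaces of exact and of co-exact $p$-forms. The first step is to observe that $\Delta^\alpha$ also preserves the space of harmonic $p$-forms: if $\omega$ is harmonic, then $d^M(\Delta^\alpha\omega)=\Delta^\alpha(d^M\omega)=0$ and $\delta^M(\Delta^\alpha\omega)=\Delta^\alpha(\delta^M\omega)=0$, so $\Delta^\alpha\omega$ is again harmonic. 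Thus all three summands of the Hodge decomposition are $\Delta^\alpha$-invariant, and the full spectrum of $\Delta^\alpha$ on $p$-forms is the union of its spectra on the exact, co-exact, and harmonic parts.

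The key point is to identify how $\Delta^\alpha$ acts on harmonic forms. On a harmonic $p$-form $\omega$ we have $\Delta^M\omega=0$, and Lemma \ref{harm} gives $\mathcal{L}_\alpha\omega=0$ since $\alpha$ is Killing. Substituting these into the simplified formula \eqref{eq:relationkilling} yields $\Delta^\alpha\omega=|\alpha|^2\omega$, so every harmonic $p$-form is an eigenform with eigenvalue $|\alpha|^2$, which is a constant by hypothesis. Consequently the smallest eigenvalue of $\Delta^\alpha$ equals $\mathrm{min}(\lambda_{1,p}^{\alpha}(M)',\lambda_{1,p}^{\alpha}(M)'',|\alpha|^2)$, where the last entry is present precisely when $H^p(M)\neq 0$. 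This minimum is by definition either $|\alpha|^2$ or $\mathrm{min}(\lambda_{1,p}^{\alpha}(M)',\lambda_{1,p}^{\alpha}(M)'')$, which is exactly the claimed dichotomy (the second alternative also covering the case $H^p(M)=0$, where no harmonic forms occur).

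For the final estimate, assume only that $H^p(M)\neq 0$ and use a nonzero harmonic $p$-form $\omega$ as a test form in the variational characterization \eqref{eq:minmax}. Since $d^M\omega=\delta^M\omega=0$, we have $d^\alpha\omega=i\,\alpha\wedge\omega$ and $\delta^\alpha\omega=-i\,\alpha^\sharp\lrcorner\omega$, so that $|d^\alpha\omega|^2+|\delta^\alpha\omega|^2=|\alpha\wedge\omega|^2+|\alpha^\sharp\lrcorner\omega|^2$. The pointwise identity $|\alpha\wedge\omega|^2+|\alpha^\sharp\lrcorner\omega|^2=|\alpha|^2|\omega|^2$ follows from the fact that $\alpha^\sharp\lrcorner$ is the adjoint of $\alpha\wedge$ together with the anti-commutation relation $(\alpha^\sharp\lrcorner)(\alpha\wedge)+(\alpha\wedge)(\alpha^\sharp\lrcorner)=|\alpha|^2\,\mathrm{Id}$, itself a consequence of the Leibniz rule $X\lrcorner(\beta\wedge\cdot)=(X\lrcorner\beta)\wedge\cdot-\beta\wedge(X\lrcorner\cdot)$ for a $1$-form $\beta$. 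Hence the Rayleigh quotient of $\omega$ equals $\int_M|\alpha|^2|\omega|^2\,d\mu_g\big/\int_M|\omega|^2\,d\mu_g$, which is bounded above by $\|\alpha\|_\infty^2$, giving $\lambda_{1,p}^{\alpha}(M)\leq\|\alpha\|_\infty^2$.

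The main obstacle is the second step: the invariance of the harmonic subspace and the identification of its $\Delta^\alpha$-eigenvalue as $|\alpha|^2$. This hinges essentially on $\mathcal{L}_\alpha$ annihilating harmonic forms (Lemma \ref{harm}), so that formula \eqref{eq:relationkilling} collapses to multiplication by the constant $|\alpha|^2$; without the Killing hypothesis this collapse fails and the clean splitting of the spectrum is lost. By contrast, the computation in the last paragraph is routine once the pointwise norm identity is established, and it requires neither constant norm nor the full decomposition argument.
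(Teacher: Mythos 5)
Your proof is correct, and for the main dichotomy it follows essentially the same route as the paper: Hodge decomposition, the commutation of $\Delta^\alpha$ with $d^M$ and $\delta^M$ from Proposition \ref{prop:deltaalphad} (so that each Hodge summand is $\Delta^\alpha$-invariant), and then Lemma \ref{harm} together with formula \eqref{eq:relationkilling} to identify the harmonic summand as an eigenspace for the constant eigenvalue $|\alpha|^2$. The paper phrases this by decomposing a single eigenform attached to $\lambda_{1,p}^{\alpha}(M)$ and invoking uniqueness of the Hodge decomposition (nonzero harmonic component forces $\lambda_{1,p}^{\alpha}(M)=|\alpha|^2$; vanishing harmonic component forces $\lambda_{1,p}^{\alpha}(M)\geq {\rm min}(\lambda_{1,p}^{\alpha}(M)',\lambda_{1,p}^{\alpha}(M)'')$, with the reverse inequality already known), whereas you split the full spectrum and write $\lambda_{1,p}^{\alpha}(M)={\rm min}(\lambda_{1,p}^{\alpha}(M)',\lambda_{1,p}^{\alpha}(M)'',|\alpha|^2)$; these are the same argument in substance. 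Where you genuinely diverge is the final estimate: the paper again appeals to Lemma \ref{harm} and \eqref{eq:relationkilling} to get $\Delta^\alpha\omega=|\alpha|^2\omega$ for a harmonic form $\omega$, so its derivation of $\lambda_{1,p}^{\alpha}(M)\leq \Vert\alpha\Vert_\infty^2$ still leans on the Killing hypothesis, whereas you insert the harmonic form directly into the variational characterization \eqref{eq:minmax} and use the pointwise identity $|\alpha\wedge\omega|^2+|\alpha^\sharp\lrcorner\omega|^2=|\alpha|^2|\omega|^2$. Your route is more elementary and actually proves more: the bound $\lambda_{1,p}^{\alpha}(M)\leq\Vert\alpha\Vert_\infty^2$ then holds for \emph{any} magnetic potential $\alpha$ with $H^p(M)\neq 0$, Killing or not; this is the same mechanism that drives the proof of Theorem \ref{thm:CS}, and it is the natural generalization of the estimate for functions recalled in Section \ref{sec:maglapfunc}.
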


\begin{proof}
Let $\omega$ be a complex $p$-eigenform of the magnetic Hodge Laplacian associated to the first eigenvalue $\lambda_{1,p}^{\alpha}(M)$. By the Hodge decomposition, we write $$\omega=d^M\omega_0+\delta^M\omega_1+\omega_2,$$
where $\omega_0\in \Omega^{p-1}(M,\mathbb{C}), \omega_1\in \Omega^{p+1}(M,\mathbb{C})$ and $\omega_2\in \Omega^{p}(M,\mathbb{C})$ is harmonic. 
From the equation $\Delta^\alpha\omega=\lambda_{1,p}^{\alpha}(M)\omega$, by uniqueness of the decomposition and the fact that both $d^M$ and $\delta^M$ commute with $\Delta^\alpha$, we obtain the relation
$\Delta^\alpha\omega_2=\lambda_{1,p}^{\alpha}(M)\omega_2$. Now, if $\omega_2$ does not vanish, then by the fact that $\alpha$ is Killing and $\omega_2$ is harmonic, we have by Lemma \ref{harm} that $\mathcal{L}_\alpha\omega_2=0$. Thus, by Equation \eqref{eq:relationkilling}, we get that $\Delta^\alpha\omega_2=|\alpha|^2\omega_2$ and, therefore, $\lambda_{1,p}^{\alpha}(M)=|\alpha|^2$. If $\omega_2$ vanishes, then we have $\omega=d^M\omega_0+\delta^M\omega_1$ and, hence, ${\rm min}(\lambda_{1,p}^{\alpha}( M)',\lambda_{1,p}^{\alpha}(M)'')\leq \lambda_{1,p}^{\alpha}(M)$. When $H^p(M)\neq 0$ then there is a non-vanishing $p$-harmonic form $\omega$ on $M$ and thus, as before, $\Delta^\alpha\omega=|\alpha|^2\omega$. Thus, by the min-max principle we deduce the required estimate. This finishes the proof.
\end{proof}

\begin{example}
As in the previous examples, consider the manifold $M=\mathbb{S}^3$ equipped with the standard metric of curvatue $1$. Let $Y_2$ be the unit Killing vector field as in Appendix \ref{sec:dudvalphaeig}. It follows that the $1$-forms $d^Mu$, $d^Mv$ and $\alpha = t Y_2$ are all simultaneous eigenforms of
  the operators $\Delta^\alpha$ such that
  \begin{eqnarray*}
  \Delta^\alpha d^Mu &=& (3+2t+t^2)d^Mu, \\
  \Delta^\alpha d^Mv &=& (3-2t+t^2)d^Mv, \\
  \Delta^\alpha \alpha &=& (4+t^2) \alpha.
  \end{eqnarray*}
  Moreover, $d^Mu, d^Mv$ are exact eigenforms associated to the smallest eigenvalue $\lambda_{1,1}'(M) =3$ and $\alpha$ is a co-exact eigenform associated to the smallest eigenvalue $\lambda_{1,1}''(M) = 4$ (see \cite{Paq79}). Therefore, we have for small $t > 0$,
  $$ \lambda_{1,1}^{\alpha}(M) = {\rm min}(\lambda_{1,1}^{\alpha}( M)',\lambda_{1,1}^{\alpha}(M)'') = 3-2t+t^2, $$
  since $H^1(M) = 0$. On the other hand, we get by Equation \eqref{eq:specmagS3} that for small $t>0$, $\lambda_{1,0}^\alpha(M) = t^2=| \alpha|^2.$ However, we have that $${\rm min}(\lambda_{1,0}^{\alpha}( M)',\lambda_{1,0}^{\alpha}(M)'')=\lambda_{1,0}^{\alpha}(M)''=3-2t+t^2.$$
\end{example}

\subsection{Gauge invariance of the magnetic Hodge Laplacian} \label{subsec:gaugeinv}

Another consequence of the magnetic Bochner-Weitzenb\"ock formula \eqref{eq:deltaalphaforms} is the following result.

\begin{corollary} \label{cor:gaugeinv}
 Let $(M^n,g)$ be a Riemannian manifold and let $\alpha$ be a differential $1$-form on $M$. For any $\alpha_\tau=\frac{d^M\tau}{i\tau}\in  \mathfrak{B}_M$ for some $\tau\in C^\infty(M,\mathbb{S}^1)$, the magnetic Laplacians $\Delta^\alpha$ and $\Delta^{\alpha + \alpha_\tau}$ on $p$-forms are unitarily equivalent, meaning that
$$ \bar \tau \Delta^\alpha \tau = \Delta^{\alpha+\alpha_\tau}. $$
In particular, $\Delta^\alpha$ and $\Delta^{\alpha+\alpha_\tau}$ have the same spectrum on a closed oriented Riemannian manifold.
\end{corollary}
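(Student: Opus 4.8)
The plan is to deduce everything from the gauge covariance of the magnetic connection $\nabla^\alpha$ itself. Write $U_\tau$ for the operator of multiplication by $\tau$ on $\Omega^p(M,\CC)$. Since $\tau\in C^\infty(M,\mathbb{S}^1)$ we have $\bar\tau\tau=1$ pointwise, so $U_\tau$ is a pointwise unitary bundle map with $U_\tau^{-1}=U_{\bar\tau}=U_\tau^{*}$; note also that $\alpha_\tau=d^M\tau/(i\tau)$ is a real closed $1$-form (locally $\tau=e^{i\theta}$ gives $\alpha_\tau=d^M\theta$). The central step is the transformation law
\[
U_{\bar\tau}\,\nabla^\alpha_X\,U_\tau=\nabla^{\alpha+\alpha_\tau}_X\qquad(X\in\cX(M)),
\]
which I would verify by a one-line Leibniz computation: $U_{\bar\tau}\nabla^\alpha_X(U_\tau\omega)=\bar\tau\big((X\tau)\,\omega+\tau\,\nabla^M_X\omega\big)+i\alpha(X)\omega$, and since $X\tau=d^M\tau(X)=i\tau\,\alpha_\tau(X)$ the scalar $\bar\tau X\tau$ equals $i\alpha_\tau(X)$, producing exactly $\nabla^M_X\omega+i(\alpha+\alpha_\tau)(X)\omega$.

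Granting this, the local expressions \eqref{eq:localddelta} build $d^\alpha$ and $\delta^\alpha$ from $\nabla^\alpha$ using only the (zeroth-order) wedge and interior products, which commute with the scalar operator $U_\tau$; hence the intertwining descends to
\[
U_{\bar\tau}\,d^\alpha\,U_\tau=d^{\alpha+\alpha_\tau},\qquad U_{\bar\tau}\,\delta^\alpha\,U_\tau=\delta^{\alpha+\alpha_\tau}.
\]
(The first identity is in any case immediate from $d^\alpha=d^M+i\alpha\wedge$ and the Leibniz rule, and the second then follows by taking $L^2$-adjoints, using $U_\tau^{*}=U_{\bar\tau}$ and $\delta^\beta=(d^\beta)^{*}$ for the real $1$-form $\beta=\alpha+\alpha_\tau$.) Composing these two relations and cancelling the inner factors via $U_\tau U_{\bar\tau}=\mathrm{Id}$ then gives
\[
\Delta^{\alpha+\alpha_\tau}=U_{\bar\tau}\big(d^\alpha\delta^\alpha+\delta^\alpha d^\alpha\big)U_\tau=\bar\tau\,\Delta^\alpha\,\tau,
\]
which is the asserted identity.

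The same conclusion can be read directly off the magnetic Bochner-Weitzenb\"ock formula \eqref{eq:bochnermagnetic}, matching the framing of the corollary: because $\alpha_\tau$ is closed we have $d^M(\alpha+\alpha_\tau)=d^M\alpha$, so Lemma \ref{lem:magneticbochner} gives $\mathcal{B}^{[p],\alpha}=\mathcal{B}^{[p],\alpha+\alpha_\tau}$, a zeroth-order endomorphism that commutes with $U_\tau$, while $(\nabla^\alpha)^{*}\nabla^\alpha$ transforms correctly since $\nabla^\alpha$ does and $U_\tau$ is unitary. Finally, unitarity of $U_\tau$ on $L^2\Omega^p(M,\CC)$ turns $\Delta^{\alpha+\alpha_\tau}=U_{\bar\tau}\Delta^\alpha U_\tau$ into a unitary equivalence, which on a closed oriented manifold preserves the discrete spectrum, yielding the last claim. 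I expect no genuine obstacle here: the only delicate point is keeping track of the conjugation convention in the Hermitian musical isomorphism if one insists on computing $U_{\bar\tau}\delta^\alpha U_\tau$ by hand, and the connection-level argument above sidesteps this entirely.
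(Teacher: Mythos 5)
Your proof is correct, but it takes a genuinely different route from the paper. The paper proves the identity $\bar\tau\Delta^\alpha\tau=\Delta^{\alpha+\alpha_\tau}$ by brute-force expansion: it starts from the magnetic Bochner--Weitzenb\"ock expression \eqref{eq:deltaalphaforms}, applies the identity $\Delta^M(f\omega)=f\Delta^M\omega+(\Delta^M f)\omega-2\nabla^M_{d^Mf}\omega$ with $f=\tau$, computes $\Delta^M\tau=i\tau\delta^M\alpha_\tau+\tau|\alpha_\tau|^2$ from $d^M\tau=i\tau\alpha_\tau$, and then regroups all terms into $\Delta^{\alpha+\alpha_\tau}$, using closedness of $\alpha_\tau$ only through the equality $A^{\alpha}=A^{\alpha+\alpha_\tau}$ of the zeroth-order curvature terms. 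You instead establish gauge covariance at the level of the magnetic connection, $U_{\bar\tau}\nabla^\alpha_X U_\tau=\nabla^{\alpha+\alpha_\tau}_X$, via a one-line Leibniz computation, and then push it through the tensorial (hence $U_\tau$-commuting) wedge and interior products in the local expressions \eqref{eq:localddelta} to get $U_{\bar\tau}d^\alpha U_\tau=d^{\alpha+\alpha_\tau}$ and $U_{\bar\tau}\delta^\alpha U_\tau=\delta^{\alpha+\alpha_\tau}$, from which the Laplacian identity follows by composition. Your argument is more structural: it needs no Bochner formula, no computation of $\Delta^M\tau$, and it yields the stronger statement that $d^\alpha$ and $\delta^\alpha$ are separately intertwined, which the paper's term-by-term cancellation never makes explicit. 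What the paper's approach buys in exchange is a concrete check that the formula \eqref{eq:deltaalphaforms} behaves consistently under gauge transformations, term by term, which serves as a sanity check on that formula and keeps the proof within the toolkit the section has just built. One small caveat on your parenthetical alternative: deducing the $\delta^\alpha$-intertwining by taking $L^2$-adjoints tacitly uses that $\delta^\beta$ is the formal adjoint of $d^\beta$ for real $\beta$ (fine, since $\alpha_\tau$ is real, but on a noncompact $M$ this should be read as a formal-adjoint statement on compactly supported forms); your primary connection-level derivation is purely local and avoids this entirely, so the main argument is unaffected.
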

\begin{proof}
The proof relies mainly on the following identity.
For any $f\in C^\infty(M,\mathbb{C})$ and $\omega\in \Omega^p(M,\CC)$, we have
$$\Delta^M(f\omega)=f\Delta^M\omega+(\Delta^M f)\omega-2\nabla^M_{d^Mf}\omega.$$
Hence, for $f=\tau\in C^\infty(M,\mathbb{S}^1)$, we use Equation \eqref{eq:deltaalphaforms} to compute
\begin{eqnarray}\label{eq:unit}
\bar\tau\Delta^\alpha(\tau\omega)&=&\bar\tau\left(\Delta^M(\tau\omega)-i A^{[p],\alpha}(\tau\omega)+ i (\delta^M \alpha)(\tau\omega) - 2 i \nabla^M_\alpha(\tau\omega)+ |\alpha|^2\tau\omega\right)\nonumber\\
&=&\Delta^M\omega+\bar\tau(\Delta^M\tau)\omega-2\bar\tau\nabla^M_{d^M\tau}\omega-i A^{[p],\alpha}\omega+ i (\delta^M \alpha)\omega\nonumber\\
&&- 2 i \bar\tau\alpha(\tau)\omega-2i\nabla^M_\alpha\omega+ |\alpha|^2\omega.
\end{eqnarray}
Taking the divergence of $d^M\tau=i\tau\alpha_\tau$, we get that $$\Delta^M\tau=\delta^M(i\tau\alpha_\tau)=i\tau\delta^M\alpha_\tau+\tau|\alpha_\tau|^2.$$
Hence, Equation \eqref{eq:unit} reduces to
\begin{eqnarray*}
\bar\tau\Delta^\alpha(\tau\omega)&=&\Delta^M\omega+i(\delta^M\alpha_\tau)\omega+|\alpha_\tau|^2\omega-2i\nabla^M_{\alpha_\tau}\omega-i A^{[p],\alpha}\omega+ i (\delta^M \alpha)\omega\nonumber\\
&&+ 2\langle\alpha,\alpha_\tau\rangle\omega-2i\nabla^M_\alpha\omega+ |\alpha|^2\omega\\
&=&\Delta^M\omega-i A^{[p],\alpha+\alpha_\tau}\omega+i\delta^M(\alpha_\tau+\alpha)\omega-2i\nabla^M_{\alpha+\alpha_\tau}\omega+|\alpha+\alpha_\tau|^2\omega\\
&=&\Delta^{\alpha+\alpha_\tau}\omega.
\end{eqnarray*}
In the second equality, we used the fact that $A^\alpha=A^{\alpha+\alpha_\tau}$ since $\alpha_\tau$ is a closed form. This allows us to deduce the result.
\end{proof}
The gauge invariance of the magnetic Laplacian allows us to state a Shikegawa type result for differential forms. 

\begin{theorem} \label{thm:shiforms} Let $(M^n,g)$ be a compact Riemannian manifold and let $\alpha$ be a one-form on $M$. Assume that $M$ carries a non-zero parallel $p$-form $\omega_0$ on $M$. Then we have the following: 
 \begin{itemize}
     \item[(a)] If $\alpha \in \mathfrak{B}_M$, then $\lambda_{1,p}^\alpha(M) = 0$ and there exists an eigenform $\omega$ of $\Delta^\alpha$ associated with the eigenvalue $\lambda_{1,p}^\alpha(M)$ such that     $f:=\langle \omega,\omega_0\rangle$ is nowhere vanishing.
     \item[(b)] Conversely, assume that $\alpha$ is Killing. If $\lambda_{1,p}^\alpha(M) = 0$ and there exists an eigenform $\omega$ of $\Delta^\alpha$ associated with the eigenvalue $\lambda_{1,p}^\alpha(M)$ such that $f:=\langle \omega,\omega_0\rangle$ is not vanishing, then $\alpha \in \mathfrak{B}_M$ and, in this case, it is a parallel form. 
 \end{itemize}
\end{theorem}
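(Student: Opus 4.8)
The plan is to prove the two implications separately, using gauge invariance (Corollary \ref{cor:gaugeinv}) for (a), and the Killing-form expression \eqref{eq:relationkilling} together with Shigekawa's scalar result (Theorem \ref{thm:shi}) for (b). Throughout I would exploit that a parallel form $\omega_0$ is automatically harmonic, with $\nabla^M\omega_0=0$, $\Delta^M\omega_0=0$ and constant nonzero norm $|\omega_0|$, and I would keep careful track of the conjugate-linearity of the Hermitian pairing $\langle\cdot,\cdot\rangle$ (linear in the first slot, conjugate-linear in the second).

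For part (a), since $\alpha\in\mathfrak{B}_M$ I may write $\alpha=\alpha_\tau=d^M\tau/(i\tau)$ for some $\tau\in C^\infty(M,\mathbb{S}^1)$. Applying Corollary \ref{cor:gaugeinv} with base potential $0$ gives $\bar\tau\,\Delta^M\tau=\Delta^{\alpha_\tau}=\Delta^\alpha$, that is, $\Delta^\alpha\eta=\bar\tau\,\Delta^M(\tau\eta)$ for every form $\eta$. I would test this on $\eta=\bar\tau\omega_0$: since $|\tau|=1$ we have $\tau\eta=\omega_0$, and as $\Delta^M\omega_0=0$ this yields $\Delta^\alpha(\bar\tau\omega_0)=0$. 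Hence $\lambda_{1,p}^\alpha(M)=0$ with eigenform $\omega=\bar\tau\omega_0$, and $f=\langle\omega,\omega_0\rangle=\bar\tau|\omega_0|^2$ is nowhere vanishing because $|\omega_0|$ is a nonzero constant and $|\bar\tau|=1$.

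For part (b), the strategy is to show that the scalar function $f=\langle\omega,\omega_0\rangle$ is magnetic-harmonic, i.e. $\Delta^\alpha f=0$ for the magnetic Laplacian on \emph{functions}, and then invoke Theorem \ref{thm:shi}. Starting from $\Delta^\alpha\omega=0$ and pairing pointwise with $\omega_0$, I would use \eqref{eq:relationkilling} to write $0=\langle\Delta^M\omega,\omega_0\rangle-2i\langle\mathcal{L}_\alpha\omega,\omega_0\rangle+|\alpha|^2 f$. The first term I would reduce, in a normal frame using $\nabla^M\omega_0=0$, to $\langle\nabla^*\nabla\omega,\omega_0\rangle$, and then to $\langle\Delta^M\omega,\omega_0\rangle$ after noting that the Bochner term drops out since $\mathcal{B}^{[p]}$ is symmetric and $\mathcal{B}^{[p]}\omega_0=(\Delta^M-\nabla^*\nabla)\omega_0=0$ by \eqref{eq:bochner}; this identifies $\langle\Delta^M\omega,\omega_0\rangle=\Delta^M f$. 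For the Lie-derivative term I would apply \eqref{eq:adjointlie} with $\omega'=\omega_0$, together with $\mathcal{L}_\alpha\omega_0=0$ (Lemma \ref{harm}, as $\omega_0$ is harmonic and $\alpha$ is Killing), to get $\langle\mathcal{L}_\alpha\omega,\omega_0\rangle=\alpha(f)$. Since $\alpha$ Killing gives $\delta^M\alpha=0$, the resulting identity $\Delta^M f-2i\alpha(f)+|\alpha|^2 f=0$ is precisely $\Delta^\alpha f=0$ by \eqref{eq:DalphaD}.

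With $f$ nowhere vanishing this forces $\lambda_1^\alpha(M)=0$ for the scalar magnetic Laplacian (its Rayleigh quotient at $f$ is zero), so Theorem \ref{thm:shi} yields $\alpha\in\mathfrak{B}_M$ and in particular $d^M\alpha=0$. Finally, for a Killing $1$-form one has $X\lrcorner d^M\alpha=2\nabla^M_X\alpha$ as in Proposition \ref{prop:deltaalphad}, so $d^M\alpha=0$ gives $\nabla^M\alpha=0$, i.e. $\alpha$ is parallel. The main obstacle I anticipate is the reduction $\langle\Delta^M\omega,\omega_0\rangle=\Delta^M f$: it is essential that $\omega_0$ be parallel rather than merely harmonic, both to commute $\nabla^M$ past the contraction and to annihilate the Bochner contribution, and one must handle the Hermitian conjugations consistently throughout.
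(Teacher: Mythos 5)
Your proposal is correct and takes essentially the same route as the paper: part (a) rests on the gauge-invariance Corollary \ref{cor:gaugeinv} with the zero eigenform $\bar\tau\omega_0$ (the paper just verifies $d^\alpha(\bar\tau\omega_0)=\delta^\alpha(\bar\tau\omega_0)=0$ directly instead of invoking the unitary-equivalence identity), and part (b) derives $\Delta^\alpha f=0$ from the Bochner formula, \eqref{eq:relationkilling}, \eqref{eq:adjointlie} and Lemma \ref{harm}, then applies Theorem \ref{thm:shi} and the Killing-plus-closed argument to conclude $\alpha$ is parallel, exactly as the paper does. Note only that the hypothesis in (b) is that $f$ is not identically zero (not nowhere vanishing), which is all your Rayleigh-quotient step actually needs.
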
 
\begin{proof}
 We first prove $(a)$. Since $\alpha=\frac{d^M\tau}{i\tau}\in  \mathfrak{B}_M$ for some $\tau\in C^\infty(M,\mathbb{S}^1)$, we deduce from Corollary \ref{cor:gaugeinv}  that the magnetic Laplacian has the same spectrum as the Hodge Laplacian $\Delta^M$. Hence the first eigenvalue $\lambda_{1,p}^\alpha(M)$ is equal to $0$ due to the existence of a parallel form $\omega_0$ which gives that $d^M \omega_0 = \delta^M \omega_0 = 0$. Moreover, one can easily check that the form $\omega:=\overline{\tau}\omega_0$ satisfies 
\begin{eqnarray*}
d^\alpha\omega&=&d^M(\overline{\tau}\omega_0)+i\alpha\wedge \overline{\tau}\omega_0\\
&=&d^M\overline{\tau}\wedge \omega_0+i\overline{\tau}\alpha\wedge \omega_0\\
&=&-i\overline{\tau}\alpha\wedge\omega_0+i\overline{\tau}\alpha\wedge \omega_0\\
&=&0.
\end{eqnarray*}
In the same way, we prove that $\delta^\alpha\omega=0$. Therefore, we have $\Delta^\alpha \omega = 0$. Hence the function $f=\langle\omega,\omega_0\rangle=\overline{\tau}|\omega_0|^2$ is nowhere zero since $\overline{\tau}\in \mathbb{S}^1$ and the parallel form $\omega_0$ is of constant norm. 

Now, we prove $(b)$. For this, we assume that $\alpha$ is Killing and we compute the Laplacian of the function $f$. We choose a local orthonormal frame $\{e_i\}$ of $TM$ such that $\nabla^M e_i|_x=0$ at some point $x$. Since the form $\omega_0$ is parallel, we write 
\begin{eqnarray*}
\Delta^M f(x)&=&-\sum_{i=1}^n (e_i(e_i(f)))(x)
=-\sum_{i=1}^n\langle\nabla^M_{e_i}\nabla^M_{e_i}\omega,\omega_0\rangle_x\\
&=&\langle\nabla^*\nabla\omega,\omega_0\rangle_x\\
&\stackrel{\eqref{eq:bochner}}{=}&\langle\Delta^M\omega-\mathcal{B}^{[p]}\omega,\omega_0\rangle_x\\
&\stackrel{\eqref{eq:relationkilling}}{=}&\langle\Delta^\alpha\omega+2i\mathcal{L}_\alpha\omega-|\alpha|^2\omega,\omega_0\rangle_x-\langle\omega,\mathcal{B}^{[p]}\omega_0\rangle_x\\
&\stackrel{\eqref{eq:adjointlie}}{=}&-2i\langle\omega,\mathcal{L}_\alpha\omega_0\rangle_x+2i\alpha(\langle\omega,\omega_0\rangle_x)-|\alpha(x)|^2 f(x)\\
&=&2i\alpha(f)(x) -|\alpha(x)|^2f(x).
\end{eqnarray*}
In this computation, we used the fact that $\mathcal{B}^{[p]}\omega_0=0$ since $\omega_0$ is parallel, and also that $\mathcal{L}_\alpha\omega_0=0$ by Lemma \ref{harm}. Therefore, Equation \eqref{eq:DalphaD} and ${\rm div} \alpha^\sharp =0$ (since $\alpha$ is Killing) allows us to deduce that $\Delta^\alpha f=0$ and, therefore $\lambda_1^\alpha(M)=0$. Now, the classical Shikegawa's result (Theorem \ref{thm:shi}) allows us to get that $\alpha \in \mathfrak{B}_M$ which is also equivalent to the fact that $d^M\alpha=0$ and $\int_c\alpha\in 2\pi\mathbb{Z}$ for all closed curves $c$ in $M$. Now, the condition $d^M\alpha=0$ means that $\nabla^M\alpha$ is a symmetric two-tensor which is also skew-symmetric by the fact that $\alpha$ is Killing. Hence, the form $\alpha$ is parallel.  
\end{proof}

\begin{remark}
We know from Lemma \ref{harm} that, on a compact manifold $(M^n,g)$, for any harmonic form $\omega$ and a Killing one-form $\alpha$, we have that $\mathcal{L}_\alpha\omega=0$. However, there are $\Delta^\alpha$-harmonic forms for which this fact no longer holds.
Indeed, assume that $M$ carries a Killing one-form $\alpha$ which is also in $\mathfrak{B}_M$, that is  $\alpha=\frac{d^M\tau}{i\tau}$ (for instance, such forms exist on the flat torus) and hence parallel by the same arguments as in the above proof. Assume also that a non-zero parallel $p$-form $\omega_0$ exists on $M$. We have seen from the proof of Theorem \ref{thm:shiforms} that $\omega=\overline{\tau}\omega_0$ is a $\Delta^\alpha$-harmonic form. Now, we compute 
$$\mathcal{L}_\alpha\omega=\alpha(\overline\tau)\omega_0+\overline{\tau}\mathcal{L}_\alpha\omega_0=-i\overline{\tau}|\alpha|^2\omega_0\neq 0,$$
since $\alpha$ is parallel and, hence, is of constant norm. 
\end{remark}

We illustrate Theorem \ref{thm:shiforms} with two examples.

\begin{examples}~
\begin{itemize}
\item[(a)] The flat torus $\mathbb{T}^n$ is trivialized by parallel $p$-forms for any $p$. Hence, one can always find, for any non-trivial differential form $\omega$, a parallel form $\omega_0$ such that $f=\langle\omega,\omega_0\rangle$ is not vanishing. Let $\alpha$ be any Killing one-form, we get 
\begin{equation}\label{eq:shitorus}
\lambda_{1,p}^\alpha(\mathbb{T}^n)=0 \Longleftrightarrow \alpha\in \mathcal{B}_{\mathbb{T}^n}.
\end{equation}
\item[(b)] Let us consider the product manifold $M=\mathbb{S}^1\times\mathbb{S}^3$ with the product metric. For $A\in \mathbb{R}$, we let $\alpha=A \omega_0$ be the one-form on $M$, where $\omega_0:=d\theta$ is the parallel unit one-form on $\mathbb{S}^1$. It is not difficult to check that $\alpha\in \mathcal{B}_{\mathbb{S}^1\times\mathbb{S}^3}$ if and only if $A\in \mathbb{Z}$. We show   
\begin{equation*}
\lambda_{1,1}^\alpha(\mathbb{S}^1\times\mathbb{S}^3)=0 \Longleftrightarrow A\in \mathbb{Z}.
\end{equation*}
When $A \in \mathbb{Z}$, the spectrum of $\Delta^\alpha$ is the same as the spectrum of $\Delta^M$, and hence $\lambda_{1,1}^\alpha(M)=0$ due to the existence of a parallel one-form. For the converse, assume that $\lambda_{1,1}^\alpha(\mathbb{S}^1\times\mathbb{S}^3)=0$ and that $A\notin\mathbb{Z}$. Hence, $\alpha\notin \mathcal{B}_{\mathbb{S}^1\times\mathbb{S}^3}$ and by Theorem \ref{thm:shiforms}, we obtain that $f=\langle\omega,\omega_0\rangle=0$ for any eigenform $\omega$ associated to $\lambda_{1,1}^\alpha(M)$. Therefore, if we consider an orthonormal frame on $\{\xi,e_1,e_2\}$ on $T\mathbb{S}^3$ such that $\xi$ is the unit Killing vector field that defines the Hopf fibration with $\nabla^{\mathbb{S}^3}_{e_1}\xi=e_2$ and $\nabla^{\mathbb{S}^3}_{e_2}\xi=-e_1$ (since the complex structure on $\mathbb{S}^2$ is given by $J(X) = \nabla_X^{\mathbb{S}^3} \xi$), we write 
$$\omega=f_0\xi+f_1 e_1+f_2 e_2$$
where $f_0,f_1,f_2$ are smooth functions on $\mathbb{S}^1\times\mathbb{S}^3$. Now, the condition $d^\alpha\omega=0$ allows us to get that $\frac{\partial f_k}{\partial\theta}=-iAf_k$ for $k=0,1,2$, which gives that $f_k=g_k e^{-iA\theta}$ with functions $g_k$ which are constant on $\mathbb{S}^1$. However, the functions $f_k$ are only periodic functions on $\mathbb{S}^1$ when $A\in \mathbb{Z}$, which is a contradiction.  
\end{itemize}
\end{examples}

\section{Eigenvalue estimates for the magnetic Hodge Laplacian on closed manifolds}

In this section, we establish several eigenvalue estimates for the magnetic Hodge Laplacian on a closed oriented Riemannian manifold $(M^n,g)$. In particular, we show that the diamagnetic inequality cannot hold in general.

\subsection{A magnetic Gallot-Meyer estimate}

The aim of this subsection is to derive a lower bound for the first eigenvalue of the magnetic Hodge Laplacian on $p$-forms that is analogous to that of Gallot-Meyer. We begin with the following lemma similar to \cite[Lem. 6.8]{GM:75}, relating the magnetic connection to the magnetic differential and co-differential.

\begin{lemma}\label{lem:gallotmeyer}
Let $(M^n,g)$ be a Riemannian manifold and let $\alpha$ be a magnetic potential. For any complex differential $p$-form $\omega$ with $p\geq 1$, we have
\begin{equation}\label{eq:twistor}
|\nabla^\alpha\omega|^2\geq \frac{1}{p+1}|d^\alpha\omega|^2+\frac{1}{n-p+1}|\delta^\alpha\omega|^2.
\end{equation}
\end{lemma}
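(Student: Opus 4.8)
The plan is to follow the classical Gallot--Meyer argument \cite[Lem.~6.8]{GM:75} essentially verbatim, the point being that replacing $\nabla^M$ by $\nabla^\alpha = \nabla^M + i\alpha(\cdot)$ only alters the connection by a zeroth-order term and hence leaves untouched the pointwise representation-theoretic decomposition on which the estimate rests. Fix $x\in M$, a local orthonormal frame $\{e_1,\dots,e_n\}$, and its dual coframe $\{e_1^*,\dots,e_n^*\}$. Then $\nabla^\alpha\omega = \sum_{j=1}^n e_j^*\otimes\nabla^\alpha_{e_j}\omega$ is an element of $T_x^*M\otimes\Lambda^p T_x^*M$ (complexified) and $|\nabla^\alpha\omega|^2 = \sum_{j=1}^n|\nabla^\alpha_{e_j}\omega|^2$ for the induced Hermitian inner product. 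By \eqref{eq:localddelta}, the maps taking $\nabla^\alpha\omega$ to $d^\alpha\omega$ and to $\delta^\alpha\omega$ are, respectively, the antisymmetrization $a(\beta\otimes\gamma) = \beta\wedge\gamma$ and (up to sign) the contraction $c(\beta\otimes\gamma) = \beta^\sharp\lrcorner\gamma$, so that $d^\alpha\omega = a(\nabla^\alpha\omega)$ and $\delta^\alpha\omega = -c(\nabla^\alpha\omega)$. Crucially, $a$ and $c$ do not involve $\alpha$, so the whole matter reduces to linear algebra on $T_x^*M\otimes\Lambda^p T_x^*M$.

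The key computation is to determine $aa^*$ and $cc^*$ for the Hermitian adjoints. One checks directly that $a^*\mu = \sum_{j}e_j^*\otimes(e_j\lrcorner\mu)$ for $\mu\in\Lambda^{p+1}$ and $c^*\eta = \sum_{j}e_j^*\otimes(e_j^*\wedge\eta)$ for $\eta\in\Lambda^{p-1}$. Using the identities $\sum_j e_j^*\wedge(e_j\lrcorner\,\cdot\,) = (\deg)\,\mathrm{Id}$ on forms together with $e_j\lrcorner(e_j^*\wedge\eta) = \eta - e_j^*\wedge(e_j\lrcorner\eta)$, this yields
\[
aa^* = (p+1)\,\mathrm{Id} \ \text{ on } \Lambda^{p+1}, \qquad cc^* = (n-p+1)\,\mathrm{Id} \ \text{ on } \Lambda^{p-1}.
\]
Consequently $P_a := \tfrac{1}{p+1}a^*a$ and $P_c := \tfrac{1}{n-p+1}c^*c$ are the orthogonal projections of $T_x^*M\otimes\Lambda^p T_x^*M$ onto $\mathrm{im}(a^*)$ and $\mathrm{im}(c^*)$; applying them to $\nabla^\alpha\omega$ gives $\langle P_a\nabla^\alpha\omega,\nabla^\alpha\omega\rangle = \tfrac{1}{p+1}|d^\alpha\omega|^2$ and $\langle P_c\nabla^\alpha\omega,\nabla^\alpha\omega\rangle = \tfrac{1}{n-p+1}|\delta^\alpha\omega|^2$.

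Finally I would record that the two projections are mutually orthogonal, which is immediate from $ac^*\eta = \sum_j e_j^*\wedge(e_j^*\wedge\eta) = 0$, whence $\mathrm{im}(a^*)\perp\mathrm{im}(c^*)$ and $P_a + P_c$ is again an orthogonal projection. Since $|\nabla^\alpha\omega|^2 \ge |(P_a+P_c)\nabla^\alpha\omega|^2 = |P_a\nabla^\alpha\omega|^2 + |P_c\nabla^\alpha\omega|^2$, the claimed inequality \eqref{eq:twistor} follows. There is no real obstacle here; the only point demanding a line of care is that the maps $a$, $c$ and their adjoints are defined over $\RR$ and extended complex-linearly while the inner product is taken to be Hermitian, so that the identities $aa^* = (p+1)\,\mathrm{Id}$, $cc^* = (n-p+1)\,\mathrm{Id}$ and the orthogonality $ac^*=0$ persist verbatim for complex forms. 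I expect the verification of $aa^*$ and $cc^*$ to be the computational heart of the proof, everything else being formal.
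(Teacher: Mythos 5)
Your proof is correct and is essentially the paper's own argument in dual form: the paper defines the magnetic twistor operator $P^\alpha_X\omega := \nabla^\alpha_X\omega-\frac{1}{p+1}X\lrcorner d^\alpha\omega+\frac{1}{n-p+1}X\wedge\delta^\alpha\omega$ and deduces \eqref{eq:twistor} from $0\le |P^\alpha\omega|^2$, and this operator is precisely $(\mathrm{Id}-P_a-P_c)\nabla^\alpha\omega$ in your notation, so your Pythagorean inequality for the mutually orthogonal projections $P_a$, $P_c$ is the same pointwise orthogonal decomposition of $T_x^*M\otimes\Lambda^pT_x^*M$. The only cosmetic difference is that the paper expands the norm of the twistor part directly, using $\sum_j|e_j\lrcorner\beta|^2=p|\beta|^2$ and $\sum_j|e_j^*\wedge\beta|^2=(n-p)|\beta|^2$, whereas you obtain the same constants from $aa^*=(p+1)\,\mathrm{Id}$, $cc^*=(n-p+1)\,\mathrm{Id}$ and the orthogonality $ac^*=0$.
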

\begin{proof}
The proof relies on defining the magnetic twistor form as in the usual case: For any  complex $p$-form $\omega$ and vector field $X\in \cX_\CC(M)$, we define
$$P^\alpha_X\omega:=\nabla_X^\alpha\omega-\frac{1}{p+1}X\lrcorner d^\alpha\omega+\frac{1}{n-p+1}X\wedge\delta^\alpha\omega.$$
Using Equation \eqref{eq:localddelta}, the norm of $P^\alpha$ is equal to
$$|P^\alpha\omega|^2:= \sum_{j=1}^n |P^\alpha_{e_j}\omega|^2 =  |\nabla^\alpha\omega|^2-\frac{1}{p+1}|d^\alpha\omega|^2-\frac{1}{n-p+1}|\delta^\alpha\omega|^2\geq 0.$$
Here we use the fact that any complex $p$-form $\beta$ on $M$ can be written as $\beta=\frac{1}{p}\sum_{j=1}^n e_j^*\wedge (e_j\lrcorner\beta)$, and therefore, $\sum_{j=1}^n |e_j\lrcorner\beta|^2=p|\beta|^2$ and $\sum_{j=1}^n |e_j^*\wedge\beta|^2=(n-p)|\beta|^2$.
\end{proof}

Applying Inequality \eqref{eq:twistor} to the $1$-form $\omega:=d^\alpha f$, where $f$ is a smooth complex-valued function, we get that
$$|{\rm Hess}^\alpha f|^2=|\nabla^\alpha d^\alpha f|^2\geq \frac{1}{2}|(d^\alpha)^2 f|^2+\frac{1}{n}|\Delta^\alpha f|^2\geq \frac{1}{n}|\Delta^\alpha f|^2.$$
If the equality is attained, then $(d^\alpha)^2f=0$ which, by \eqref{eq:dalpha2}, is equivalent to $d^M \alpha=0$. Therefore if equality occurs in \eqref{eq:Lich} (that is, if $\lambda_1^\alpha(M) = a_{-}(C,A,n)$), then from \cite[p. 1147]{{ELMP:16}}, we should have equality in the above inequality which means that necessarily   $d^M \alpha=0$. This explains why sharpness of the upper bound for $\lambda_1^\alpha(M)$ in \eqref{eq:Lich} is lost. The next result now reads as a ``magnetic version'' of the Gallot-Meyer estimate \cite[Thm. 6.13]{GM:75}.

\begin{theorem} \label{thm:gm}
Let $(M^n,g)$ be a closed oriented Riemannian manifold, and let $\alpha$ be a smooth $1$-form on $M$. Assume that $\mathcal{B}^{[p],\alpha}\geq K$ for some $K>0$ and $p\geq 1$. Then, we have
$$\lambda^\alpha_{1,p}(M)\geq \frac{C}{C-1}K,$$
where $C={\rm max}(p+1,n-p+1)$.
\end{theorem}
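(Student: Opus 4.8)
The plan is to mimic the classical Gallot-Meyer argument, replacing the Hodge Laplacian and Levi-Civita connection throughout by their magnetic analogues. The starting point is the magnetic Bochner-Weitzenb\"ock formula \eqref{eq:bochnermagnetic}, namely $\Delta^\alpha = (\nabla^\alpha)^*\nabla^\alpha + \mathcal{B}^{[p],\alpha}$. Let $\omega$ be a $p$-eigenform of $\Delta^\alpha$ associated with the first eigenvalue $\lambda_{1,p}^\alpha(M)$, so that $\Delta^\alpha \omega = \lambda_{1,p}^\alpha(M)\,\omega$. First I would pair the Bochner formula with $\omega$ and integrate over the closed manifold $M$. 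Since the operator $(\nabla^\alpha)^*\nabla^\alpha$ is the formal adjoint composition for the magnetic connection, integration by parts (valid because $M$ is closed, so there are no boundary terms) yields
\begin{equation*}
\lambda_{1,p}^\alpha(M)\int_M |\omega|^2\, d\mu_g = \int_M |\nabla^\alpha \omega|^2\, d\mu_g + \int_M \langle \mathcal{B}^{[p],\alpha}\omega, \omega\rangle\, d\mu_g.
\end{equation*}
One should check that $\int_M \langle (\nabla^\alpha)^*\nabla^\alpha\omega,\omega\rangle\,d\mu_g = \int_M |\nabla^\alpha\omega|^2\,d\mu_g$; this is the magnetic version of the standard identity and follows since $\nabla^\alpha$ is a metric connection for the Hermitian inner product.

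Next I would invoke the pointwise twistor inequality from Lemma \ref{lem:gallotmeyer}, which gives
\begin{equation*}
|\nabla^\alpha\omega|^2 \geq \frac{1}{p+1}|d^\alpha\omega|^2 + \frac{1}{n-p+1}|\delta^\alpha\omega|^2 \geq \frac{1}{C}\left(|d^\alpha\omega|^2 + |\delta^\alpha\omega|^2\right),
\end{equation*}
where $C = \max(p+1, n-p+1)$, since replacing both coefficients by the smaller one $1/C$ only decreases the right-hand side. Integrating this and using the curvature hypothesis $\mathcal{B}^{[p],\alpha}\geq K$ (so that $\int_M\langle\mathcal{B}^{[p],\alpha}\omega,\omega\rangle\,d\mu_g \geq K\int_M|\omega|^2\,d\mu_g$) gives
\begin{equation*}
\lambda_{1,p}^\alpha(M)\int_M|\omega|^2\,d\mu_g \geq \frac{1}{C}\int_M\left(|d^\alpha\omega|^2+|\delta^\alpha\omega|^2\right)d\mu_g + K\int_M|\omega|^2\,d\mu_g.
\end{equation*}

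The final step is to express $\int_M(|d^\alpha\omega|^2+|\delta^\alpha\omega|^2)\,d\mu_g$ back in terms of $\lambda_{1,p}^\alpha(M)$. Since $\omega$ is an eigenform, integrating $\langle\Delta^\alpha\omega,\omega\rangle = \langle(d^\alpha\delta^\alpha+\delta^\alpha d^\alpha)\omega,\omega\rangle$ and integrating by parts gives $\int_M(|d^\alpha\omega|^2+|\delta^\alpha\omega|^2)\,d\mu_g = \lambda_{1,p}^\alpha(M)\int_M|\omega|^2\,d\mu_g$. Substituting this into the previous display and dividing through by $\int_M|\omega|^2\,d\mu_g$ yields $\lambda_{1,p}^\alpha(M) \geq \frac{1}{C}\lambda_{1,p}^\alpha(M) + K$, which rearranges to $\lambda_{1,p}^\alpha(M)\left(1 - \frac{1}{C}\right) \geq K$, i.e. $\lambda_{1,p}^\alpha(M)\geq \frac{C}{C-1}K$, as claimed. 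The only genuine subtlety, and the place I would be most careful, is verifying that all the integration-by-parts identities carry over verbatim to the magnetic setting; this rests on $\nabla^\alpha$ being compatible with the Hermitian metric and on $\delta^\alpha$ being the genuine $L^2$-adjoint of $d^\alpha$ on a closed manifold, both of which are established earlier in the paper. Everything else is the same algebra as in the classical Gallot-Meyer proof.
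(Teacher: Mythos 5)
Your proposal is correct and follows essentially the same route as the paper's own proof: integrate the magnetic Bochner--Weitzenb\"ock formula \eqref{eq:bochnermagnetic} against the first eigenform, apply the twistor inequality of Lemma \ref{lem:gallotmeyer} with $\tfrac{1}{C}=\min\bigl(\tfrac{1}{p+1},\tfrac{1}{n-p+1}\bigr)$, bound the curvature term by $K$, and close the loop using $\int_M(|d^\alpha\omega|^2+|\delta^\alpha\omega|^2)\,d\mu_g=\lambda^\alpha_{1,p}(M)\int_M|\omega|^2\,d\mu_g$. The integration-by-parts identities you flag as the ``subtlety'' are indeed justified earlier in the paper ($\delta^\alpha$ is the $L^2$-adjoint of $d^\alpha$ on a closed manifold, and $(\nabla^\alpha)^*\nabla^\alpha$ pairs to $|\nabla^\alpha\omega|^2$), so nothing is missing.
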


\begin{proof}
Let $\omega$ be a $p$-eigenform of $\Delta^\alpha$ associated to the first eigenvalue $\lambda^\alpha_{1,p}(M)$. We apply the magnetic Bochner formula to $\omega$, integrate it over $M$ and use inequality \eqref{eq:twistor} to obtain
\begin{eqnarray*}
 \lambda^\alpha_{1,p}(M)\int_M|\omega|^2d\mu_g&=&\int_M|\nabla^\alpha\omega|^2 d\mu_g+\int_M\langle \mathcal{B}^{[p],\alpha}\omega,\omega\rangle d\mu_g\\
 &\geq&\frac{1}{C}\int_M (|d^\alpha\omega|^2+|\delta^\alpha\omega|^2) d\mu_g+K\int_M|\omega|^2 d\mu_g\\
 &=& \left(\frac{\lambda^\alpha_{1,p}(M)}{C}+K\right)\int_M|\omega|^2 d\mu_g,
 \end{eqnarray*}
from which we deduce the desired inequality.
\end{proof}

\begin{remark} In view of Equality \eqref{eq:bochneroperatorstar} and since the Hodge star operator commutes with the magnetic Laplacian $\Delta^\alpha$ by Corollary \ref{cor:maglapstar}, it is enough to consider $p\leq \frac{n}{2}$ in the above estimate.
\end{remark}

\begin{example}
In order to check whether the condition $\mathcal{B}^{[p],\alpha}\geq K$  required in the previous theorem can be satisfied for some $K>0$, we will employ 
the example of the round sphere $\mathbb{S}^n$ for some odd $n=2m+1$ where the magnetic field $\alpha$ is given by $\alpha=t\xi$, for $t>0$, and $\xi$ is the unit Killing vector field on $\mathbb{S}^n$ that defines the Hopf fibration. Indeed, since on the round sphere $\mathcal{B}^{[p]}=p(n-p)$, we get that $\mathcal{B}^{[p],\alpha}=p(n-p)-tiA^{[p],\xi}$. Now, as $A^\xi X=X\lrcorner d^M\xi=2\nabla^M_X\xi$ for any vector field $X$, we can always find an orthonormal basis of $T\mathbb{S}^n$ such that the matrix of $A^\xi$ consists of the eigenvalue $0$ and block matrices of type $\begin{pmatrix}0&\pm 2\\ \mp 2&0\end{pmatrix}$. The eigenvalue $0$ corresponds to the eigenvector $\xi$ and the block matrices come from the fact that $\nabla^M\xi$ is the complex structure on $\xi^\perp$. Hence, in this basis, the eigenvalues of the symmetric matrix $iA^\xi$ are $-2,0,2$ with multiplicities $\frac{n-1}{2}, 1,\frac{n-1}{2}$ respectively. An easy computation shows that the $p$-eigenvalues of the matrix $iA^\xi$ are equal to
\begin{equation*}
\sigma_p = \left\{
\begin{matrix}
	-2p, & \text{if $p \le \frac{n-1}{2}$,}\\
	-2(n-p), & \text{if $p \ge \frac{n+1}{2}.$}
\end{matrix}\right.
\end{equation*}
Recall here that $n$ is odd. Hence the second inequality in \eqref{eq:upperbounda}  allows us to deduce that
\begin{equation*}
iA^{[p],\xi} \leq \left\{
\begin{matrix}
	2p, & \text{if $p \le \frac{n-1}{2}$,}\\
	2(n-p), & \text{if $p \ge \frac{n+1}{2}.$}
\end{matrix}\right.
\end{equation*}
Thus, for $t>0$, we deduce that
\begin{equation*}
\mathcal{B}^{[p],\alpha} \geq K=\left\{
\begin{matrix}
  p(n-p-2t), & \text{if $p \le \frac{n-1}{2}$,}\\
(p-2t)(n-p), & \text{if $p \ge \frac{n+1}{2}.$}
\end{matrix}\right.
\end{equation*}
Clearly, for any parameter $t\leq \frac{n-p}{2}$ or $\frac{p}{2}$, the number $K$ is positive. Hence, Theorem \ref{thm:gm} yields the following estimates for the first eigenvalue of the magnetic Laplacian $\Delta^\alpha$ on $\mathbb{S}^n$ with $\alpha=t\xi$,
\begin{equation*}
\lambda_{1,p}^\alpha(\mathbb{S}^n) \geq \left\{
\begin{matrix}
	\frac{n-p+1}{n-p}p(n-p-2t), & \text{if $p \le \frac{n-1}{2}$,}\\
	\frac{p+1}{p}(p-2t)(n-p), & \text{if $p \ge \frac{n+1}{2}.$}
\end{matrix}\right.
\end{equation*}
\end{example}

\subsection{A differential form analogue of a Colbois-El Soufi-Ilias-Savo estimate} \label{subsec:CESIS}

In 
\cite[Thm. 2]{CESIS-17}, the authors give an upper bound for the first Neumann eigenvalue of $\Delta^\alpha$ defined on complex functions in terms of some distance function of harmonic $1$-forms to a specific lattice and the norm of the magnetic field $d^M\alpha$ for Riemannian manifolds with boundary. In the following, we prove a similar result in the setting of differential forms for closed oriented Riemannian manifolds $(M^n,g)$. Before we state the result, let us first introduce some relevant notation: We denote by $m=b_1(M)$ the first Betti number and let $c_1,\dots,c_m$ be a basis of $H_1(M,\mathbb{Z})$ and $A_1,\dots,A_m \in H^1(M)$ be its dual basis, that is
$$ \frac{1}{2\pi}\int_{c_i} A_j = \delta_{ij}. $$
Let $\mathfrak{L}_{\mathbb{Z}}$ be the lattice
$$ \mathfrak{L}_{\mathbb{Z}} = \mathbb{Z} A_1 \oplus \mathbb{Z} A_2 \oplus \ldots \oplus \mathbb{Z} A_m. $$
If $H^1(M) = 0$ we set $\mathfrak{L}_\ZZ = 0$.  Note that, by Hodge Theory, we can think of $\mathfrak{L}_{\mathbb{Z}}$ as a discrete subset of all real harmonic $1$-forms.
We now introduce the following distance functions for any real $1$-form $\beta \in \Omega^1(M)$:
\begin{eqnarray*}
  d_2(\beta,\mathfrak{L}_{\mathbb{Z}}) &=& \sqrt{\inf_{\eta \in \mathfrak{L}_{\mathbb{Z}}} \Vert \beta - \eta \Vert_2^2}, \\
  d_\infty(\beta,\mathfrak{L}_{\mathbb{Z}}) &=& \sqrt{\inf_{\eta \in \mathfrak{L}_{\mathbb{Z}}} \Vert \beta - \eta \Vert_\infty^2}.
\end{eqnarray*}

When $\mathfrak{L}_\ZZ = 0$, the above distances reduce to $||\beta||_2$ or $||\beta||_\infty$. Now, we state the main result of this section.
\begin{theorem}\label{thm:CS}
  Let $(M^n,g)$ be a closed Riemannian manifold and $\alpha \in \Omega^1(M)$ be a magnetic potential of the form $\alpha = \delta^M \psi + h$ with $h$ a harmonic $1$-form and $\psi$ a $2$-form. Then we have the following eigenvalue estimate for the magnetic Hodge Laplacian on complex $p$-forms:
  \begin{equation} \label{eq:cesis}
  \lambda_{1,p}^\alpha(M) \le \lambda_{1,p}(M) + \min\left\{ d_\infty(\alpha,\mathfrak{L}_{\mathbb{Z}})^2,\frac{\Vert \omega_0 \Vert_\infty^2}{\Vert \omega_0 \Vert_{2}^2} d_2(\alpha,\mathfrak{L}_{\mathbb{Z}})^2 \right\}
  \end{equation}
  with
  \begin{equation} \label{eq:cesis2}
  d_2(\alpha,\mathfrak{L}_{\mathbb{Z}})^2 \le  d_2(h,\mathfrak{L}_\mathbb{Z})^2 + \frac{\Vert d^M\alpha \Vert_2^2}{\lambda''_{1,1}(M)},
  \end{equation}
 where $\omega_0$ is a real eigenform of the Hodge Laplacian $\Delta^M$ associated to the first eigenvalue $\lambda_{1,p}(M)$, and $\lambda''_{1,1}(M)$ denotes the first eigenvalue of the Hodge Laplacian on co-exact $1$-forms.
\end{theorem}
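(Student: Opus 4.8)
The plan is to combine the variational characterization \eqref{eq:minmax} of $\lambda_{1,p}^\alpha(M)$ with a gauge reduction. First I would invoke gauge invariance: by Corollary \ref{cor:gaugeinv} together with Theorem \ref{thm:shi}, every integer harmonic form $\eta \in \mathfrak{L}_{\mathbb{Z}}$ lies in $\mathfrak{B}_M$, so that $\Delta^\alpha$ and $\Delta^{\alpha-\eta}$ are unitarily equivalent and hence $\lambda_{1,p}^\alpha(M) = \lambda_{1,p}^{\alpha-\eta}(M)$ for every $\eta \in \mathfrak{L}_{\mathbb{Z}}$. The point to verify is precisely condition (b) of Theorem \ref{thm:shi} for $\eta$: since $\eta$ is harmonic we have $d^M\eta = 0$, and the normalization $\frac{1}{2\pi}\int_{c_i}A_j = \delta_{ij}$ together with the fact that $c_1,\dots,c_m$ generate $H_1(M,\mathbb{Z})$ modulo torsion forces $\int_c \eta \in 2\pi\mathbb{Z}$ for every closed curve $c$ (a torsion class contributes nothing to the period of a closed form, since a multiple of it bounds). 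This gauge reduction is the conceptual heart of the argument and the step I expect to require the most care.

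Next I would estimate $\lambda_{1,p}^{\beta}(M)$ for $\beta := \alpha - \eta$ by feeding the real first Hodge eigenform $\omega_0$ into the Rayleigh quotient \eqref{eq:minmax}. Writing $d^\beta\omega_0 = d^M\omega_0 + i\,\beta\wedge\omega_0$ and $\delta^\beta\omega_0 = \delta^M\omega_0 - i\,\beta^\sharp\lrcorner\omega_0$, the crucial observation is that, because $\omega_0$ and $\beta$ are both real, the mixed terms $2\,\mathrm{Re}\,\langle d^M\omega_0, i\,\beta\wedge\omega_0\rangle$ and $2\,\mathrm{Re}\,\langle \delta^M\omega_0, i\,\beta^\sharp\lrcorner\omega_0\rangle$ vanish identically (the Hermitian products of real forms are real, and are multiplied by $\pm i$). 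Combining this with the pointwise identity $|\beta\wedge\omega_0|^2 + |\beta^\sharp\lrcorner\omega_0|^2 = |\beta|^2\,|\omega_0|^2$ (adjointness of $\beta\wedge$ and $\beta^\sharp\lrcorner$) and using that $\omega_0$ is a $\Delta^M$-eigenform, I obtain
$$ \lambda_{1,p}^\beta(M) \le \frac{\int_M(|d^\beta\omega_0|^2 + |\delta^\beta\omega_0|^2)\,d\mu_g}{\int_M|\omega_0|^2\,d\mu_g} = \lambda_{1,p}(M) + \frac{\int_M |\beta|^2\,|\omega_0|^2\,d\mu_g}{\int_M|\omega_0|^2\,d\mu_g}. $$

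I would then bound the perturbation term in two complementary ways: pulling out $\Vert\beta\Vert_\infty^2$ gives the summand $\Vert\beta\Vert_\infty^2$, while pulling out $\Vert\omega_0\Vert_\infty^2$ gives $\frac{\Vert\omega_0\Vert_\infty^2}{\Vert\omega_0\Vert_2^2}\Vert\beta\Vert_2^2$. Since $\lambda_{1,p}^\alpha(M) = \lambda_{1,p}^{\alpha-\eta}(M)$ holds for all $\eta \in \mathfrak{L}_{\mathbb{Z}}$, taking the infimum over $\eta$ turns $\Vert\alpha-\eta\Vert_\infty^2$ and $\Vert\alpha-\eta\Vert_2^2$ into $d_\infty(\alpha,\mathfrak{L}_{\mathbb{Z}})^2$ and $d_2(\alpha,\mathfrak{L}_{\mathbb{Z}})^2$ respectively, which yields the minimum appearing in \eqref{eq:cesis}.

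Finally, for \eqref{eq:cesis2} I would exploit the decomposition $\alpha = \delta^M\psi + h$. Since shifting by $\eta \in \mathfrak{L}_{\mathbb{Z}}$ only alters the harmonic part, $L^2$-orthogonality of the co-exact and harmonic components gives $d_2(\alpha,\mathfrak{L}_{\mathbb{Z}})^2 = \Vert\delta^M\psi\Vert_2^2 + d_2(h,\mathfrak{L}_{\mathbb{Z}})^2$. It then remains to bound $\Vert\delta^M\psi\Vert_2^2$: the form $\gamma := \delta^M\psi$ is co-exact, hence co-closed, so $\Delta^M\gamma = \delta^M d^M\gamma$ and the variational characterization of $\lambda''_{1,1}(M)$ on co-exact $1$-forms gives $\lambda''_{1,1}(M)\,\Vert\gamma\Vert_2^2 \le \Vert d^M\gamma\Vert_2^2$. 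Since $d^M\alpha = d^M\delta^M\psi = d^M\gamma$ (as $d^M h = 0$), this reads $\Vert\delta^M\psi\Vert_2^2 \le \Vert d^M\alpha\Vert_2^2/\lambda''_{1,1}(M)$, which completes \eqref{eq:cesis2}.
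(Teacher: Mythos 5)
Your proof is correct, and its computational core coincides with the paper's: feeding the real first eigenform $\omega_0$ into the Rayleigh quotient \eqref{eq:minmax}, killing the cross terms because Hermitian products of real forms are real (so multiplication by $\pm i$ makes them purely imaginary), using the pointwise identity $|\beta\wedge\omega_0|^2+|\beta^\sharp\lrcorner\omega_0|^2=|\beta|^2|\omega_0|^2$, bounding the error term in the two complementary ways, and proving \eqref{eq:cesis2} via $L^2$-orthogonality of the co-exact and harmonic parts together with the variational characterization of $\lambda''_{1,1}(M)$. Where you genuinely diverge is the treatment of the lattice element $\eta$. The paper never invokes gauge invariance: it constructs the explicit multiplier $u(x)=e^{i\int_{x_0}^x\eta}$ (well defined exactly because all periods of $\eta$ lie in $2\pi\ZZ$) and plugs the twisted test form $u\,\omega_0$ into the Rayleigh quotient of $\Delta^\alpha$ itself, using $d^Mu=iu\eta$. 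Your route instead verifies Shigekawa's condition (b) for $\eta$ and uses the implication (b)$\,\Rightarrow\,$(a) of Theorem \ref{thm:shi} to get $\eta\in\mathfrak{B}_M$, then cites Corollary \ref{cor:gaugeinv} to replace $\alpha$ by $\alpha-\eta$. The two are logically equivalent --- the paper's $u$ is precisely the gauge function $\tau$ with $\alpha_\tau=\eta$ --- but your packaging makes the role of gauge invariance explicit and, incidentally, is more careful on one point the paper glosses over: your torsion argument (a multiple of a torsion class bounds, so a closed form has zero period on it) is exactly what is needed to justify the paper's terse claim that path integrals of $\eta$ ``agree up to a multiple of $2\pi$''. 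What the paper's version buys in exchange is self-containedness: it needs neither Corollary \ref{cor:gaugeinv} (whose proof runs through the magnetic Weitzenb\"ock formula) nor Theorem \ref{thm:shi}, only the elementary construction of $u$.
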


\begin{proof} The proof mainly follows the same lines as in  \cite{CESIS-17}. Firstly, we choose $\omega_0$ to be a real $p$-form. Let $\eta \in \mathfrak{L}_{\mathbb{Z}}$, that is
  $$ \eta = n_1 A_1 + n_2 A_2 + \ldots + n_m A_m \in \mathfrak{L}_{\mathbb{Z}}, $$
for some integers $n_1,\ldots, n_m \in \ZZ$.  We fix $x_0 \in M$ and define
  $$ u(x) = e^{i \int_{x_0}^x \eta}. $$
  The r.h.s. 
  is well defined and independent of the path from $x_0$ to $x$ chosen, since $\int_{x_0}^x \eta$ coincides for any pair of homotopic curves from $x_0$ and $x$ and agrees up to a multiple of $2\pi$ for any arbitrary pair of paths from $x_0$ to $x$ as $\eta \in \mathfrak{L}_{\mathbb{Z}}$. Then we have
  $d^M u = i u \eta$. Therefore, for the $p$-form $\omega:= u \omega_0$, we compute
  $$ d^\alpha \omega = d^M \omega + i \alpha \wedge \omega = (d^Mu) \wedge \omega_0 + u d^M\omega_0 + i u \alpha \wedge \omega_0 = u d^M \omega_0 + iu (\eta + \alpha) \wedge \omega_0. $$
  Similarly,
  $$ \delta^\alpha \omega = \delta^M \omega - i \alpha^\sharp \lrcorner \omega = u \delta^M \omega_0 - (d^Mu)^\sharp \lrcorner \omega_0 - i u \alpha^\sharp \lrcorner \omega_0 = u\delta^M \omega_0 - iu (\eta + \alpha)^\sharp \lrcorner \omega_0. $$
  Now we take the norms and use orthogonality of its real and imaginary parts to obtain
  $$ |d^\alpha \omega|^2 = |d^M\omega_0|^2 + |(\eta+\alpha) \wedge \omega_0|^2, $$
  and similarly
  $$|\delta^\alpha \omega|^2 = |\delta^M \omega_0|^2 +| (\eta+\alpha)^\sharp \lrcorner \omega_0|^2. $$
  Using the fact that $|X \wedge \omega |^2 + |X^\sharp\lrcorner \omega|^2 = |X|^2 \cdot |\omega|^2$ for any vector field $X$, we add the above two equations and choose $\omega_0$ to be an eigenform of the Hodge Laplacian to estimate
  \begin{eqnarray*}
   \lambda_{1,p}^\alpha(M) &\le& \frac{\int_M (|d^\alpha \omega|^2 +| \delta^\alpha \omega|^2)d\mu_g}{\int_M |\omega|^2d\mu_g} \\
   &=& \frac{\int_M (|d^M\omega_0|^2+|\delta^M \omega_0|^2)d\mu_g}{\int_M |\omega_0|^2d\mu_g} + \frac{\int_M |\eta + \alpha|^2 |\omega_0|^2d\mu_g}{\int_M |\omega_0|^2d\mu_g} \\
   &=& \lambda_{1,p}(M) + \frac{\int_M |\eta + \alpha|^2 |\omega_0|^2d\mu_g}{\int_M |\omega_0|^2d\mu_g}
  \end{eqnarray*}
  with
  $$ \frac{\int_M |\eta + \alpha|^2 |\omega_0|^2d\mu_g}{\int_M |\omega_0|^2d\mu_g} \le \min\left\{ \Vert \eta + \alpha \Vert_\infty^2, \frac{\Vert \omega_0 \Vert_\infty^2}{\Vert \omega_0 \Vert_2^2} \Vert \eta+\alpha \Vert_2^2
  \right\}.
  $$
  Since $\eta \in \mathfrak{L}_{\mathbb{Z}}$ was arbitrary, this proves Inequality \eqref{eq:cesis}.

  For the proof of Inequality \eqref{eq:cesis2}, recall that we have
  $\alpha = \delta^M \psi + h$. Since harmonic $1$-forms are $L^2$-orthogonal to the forms in $\delta^M(\Omega^2(M))$, we have
  $$ d_2(\alpha,\mathfrak{L}_{\mathbb{Z}})^2 = \inf_{\eta \in {\mathfrak{L}_{\mathbb{Z}}}} \int_M |\eta + \alpha|^2 d\mu_g 
  = \int_M |\delta^M \psi|^2 d\mu_g +  d_2(h,\mathfrak{L}_{\mathbb{Z}})^2. 
  $$
  Since $\delta^M \psi$ is co-exact, we have
  $$ \frac{\int_M |d^M \delta^M \psi|^2 d\mu_g }{\int_M |\delta^M\psi|^2 d\mu_g} \ge \lambda''_{1,1}(M),  $$
  and therefore, 
 \begin{eqnarray*}
  d_2(\alpha,\mathfrak{L}_{\mathbb{Z}})^2 &\le& \frac{\int_M |d^M \delta^M \psi|^2d\mu_g}{\lambda_{1,1}''(M)} +d_2(h,\mathfrak{L}_{\mathbb{Z}})^2 \\
  &=& \frac{\Vert d^M \alpha\Vert_2^2}{\lambda_{1,1}''(M)} + d_2(h,\mathfrak{L}_{\mathbb{Z}})^2.
  \end{eqnarray*}
  This finishes the proof of the theorem.
\end{proof}

\begin{remark}
  The factor $\frac{\Vert \omega_0 \Vert_\infty^2}{\Vert \omega_0 \Vert_2^2}$ requires knowledge of the $p$-eigenform
  of the smallest eigenvalue. Under certain curvature conditions, it can be estimated from above as explained in \cite{Li80}.
\end{remark}

\subsection{The diamagnetic inequality does not hold for the magnetic Hodge Laplacian}
\label{subsec:diamagineq}

A natural question is whether the diamagnetic inequality also holds for the magnetic Hodge Laplacian. That is, whether the inequality $$\lambda_{1,p}^\alpha(M) \geq \lambda_{1,p}(M)$$ holds or not for some $p\geq 1$. 
An example where the diamagnetic inequality holds is the flat $n$-dimensional torus $M=\mathbb{T}^n$. Clearly, the first eigenvalue $\lambda_{1,p}(M)=0$ for any $p$ due to the existence of a parallel $p$-form. Hence the inequality $\lambda_{1,p}^\alpha(M)\geq 0=\lambda_{1,p}(M)$ is satisfied. However, according to \eqref{eq:shitorus}, the first eigenvalue $\lambda_{1,p}^\alpha(M)$ can be positive.
In this subsection, we provide an example to show that the diamagnetic inequality does not hold in general. While this inequality is true for $p=0$, we provide a counterexample for $p=1$. We also give an explicit characterisation which determines whether this inequality holds for $\Delta^{t\xi}$ where $\xi$ is a Killing vector field. We start with the following estimate:

\begin{theorem} \label{thm:eigtaylor}
  Let $(M^n,g)$ be a closed oriented Riemannian manifold and $\xi \in \Omega^1(M)$. Then, for any $t \in \RR$, we have, for $\alpha=t\xi$,
  \begin{equation} \label{eq:eigvalcomp}
  \lambda^{\alpha}_{1,p}(M) \le \lambda_{1,p}(M) + \frac{2t}{ \Vert\omega\Vert_2^2} {\rm Im} \left( \int_M \langle \mathcal{L}_{\xi} \omega, \omega \rangle d\mu_g \right) + t^2 \Vert \xi \Vert_\infty^2,
  \end{equation}
  where $\omega \in \Omega^p(M,\CC)$ is an eigenform of the Hodge Laplacian $\Delta^M$ (linearly extended to complex $p$-forms) associated with the eigenvalue $\lambda_{1,p}(M)$, and $\mathcal{L}_X$ is the Lie derivative
 in the direction of the vector field $X \in \cX(M)$. In particular, if ${\rm Im} \left( \int_M \langle \mathcal{L}_{\xi} \omega, \omega \rangle d\mu_g\right)$ is negative for some complex eigenform $\omega$, then we get for small positive $t$ that
  $$ \lambda^{\alpha}_{1,p}(M) < \lambda_{1,p}(M),$$
which means that the diamagnetic inequality does not hold.
\end{theorem}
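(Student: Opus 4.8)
The plan is to bound $\lambda^{\alpha}_{1,p}(M)$ from above via the variational characterization \eqref{eq:minmax}, using as test form a complex eigenform $\omega$ of the ordinary Hodge Laplacian $\Delta^M$ associated with $\lambda_{1,p}(M)$ (such $\omega$ exists since $\Delta^M$ extends complex-linearly). Writing $\alpha = t\xi$, I would expand $d^{\alpha}\omega = d^M\omega + it\,\xi\wedge\omega$ and $\delta^{\alpha}\omega = \delta^M\omega - it\,\xi^\sharp\lrcorner\omega$, insert these into the numerator $\int_M(|d^{\alpha}\omega|^2 + |\delta^{\alpha}\omega|^2)\,d\mu_g$ of the Rayleigh quotient, and organize the outcome by powers of $t$.

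First I would treat the quadratic term. Adding the two squared norms produces $t^2\int_M(|\xi\wedge\omega|^2 + |\xi^\sharp\lrcorner\omega|^2)\,d\mu_g$, and the pointwise identity $|X\wedge\omega|^2 + |X^\sharp\lrcorner\omega|^2 = |X|^2|\omega|^2$ (already exploited in the proof of Theorem \ref{thm:CS}) collapses this to $t^2\int_M|\xi|^2|\omega|^2\,d\mu_g \le t^2\Vert\xi\Vert_\infty^2\,\Vert\omega\Vert_2^2$, which is the last term of \eqref{eq:eigvalcomp}.

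The crucial step is the linear-in-$t$ cross term, and here careful bookkeeping of real and imaginary parts is the main obstacle. Since the factor $i$ enters both $d^{\alpha}$ and $\delta^{\alpha}$, each cross term has the shape $2\,{\rm Re}\langle\,\cdot\,,\,i(\cdot)\,\rangle = 2\,{\rm Im}\langle\,\cdot\,,\,\cdot\,\rangle$. After integrating and applying the global adjunction $\int_M\langle d^M\omega,\beta\rangle = \int_M\langle\omega,\delta^M\beta\rangle$ (extended complex-linearly) together with the pointwise adjointness of $\xi\wedge$ and $\xi^\sharp\lrcorner$, the cross term becomes $2t\,{\rm Im}\int_M\big(\langle d^M\omega,\xi\wedge\omega\rangle + \langle \xi^\sharp\lrcorner\omega,\delta^M\omega\rangle\big)\,d\mu_g$. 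I would then recognize, via Cartan's formula $\mathcal{L}_\xi\omega = \xi^\sharp\lrcorner\,d^M\omega + d^M(\xi^\sharp\lrcorner\omega)$ and the same two adjunctions, that $\int_M\langle\mathcal{L}_\xi\omega,\omega\rangle\,d\mu_g = \int_M\big(\langle d^M\omega,\xi\wedge\omega\rangle + \langle\xi^\sharp\lrcorner\omega,\delta^M\omega\rangle\big)\,d\mu_g$, so the cross term equals $2t\,{\rm Im}\int_M\langle\mathcal{L}_\xi\omega,\omega\rangle\,d\mu_g$. Collecting the three contributions, dividing by $\Vert\omega\Vert_2^2$, and using $\int_M(|d^M\omega|^2 + |\delta^M\omega|^2)\,d\mu_g = \lambda_{1,p}(M)\,\Vert\omega\Vert_2^2$ yields exactly \eqref{eq:eigvalcomp}.

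Finally, for the concluding assertion I would write the right-hand side of \eqref{eq:eigvalcomp} as $\lambda_{1,p}(M) + at + bt^2$ with $a = \frac{2}{\Vert\omega\Vert_2^2}{\rm Im}\int_M\langle\mathcal{L}_\xi\omega,\omega\rangle\,d\mu_g$ and $b = \Vert\xi\Vert_\infty^2 \ge 0$. If $a<0$ for some complex eigenform $\omega$, then for all sufficiently small $t>0$ the linear term dominates, so $at + bt^2 < 0$ and hence $\lambda^{\alpha}_{1,p}(M) < \lambda_{1,p}(M)$, contradicting the diamagnetic inequality. Apart from the sign tracking in the cross term, every step is routine.
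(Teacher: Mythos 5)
Your proposal is correct and follows essentially the same route as the paper's proof: test the variational characterization \eqref{eq:minmax} with a complex $\Delta^M$-eigenform, expand in powers of $t$, identify the linear cross term with $2t\,\mathrm{Im}\int_M\langle\mathcal{L}_\xi\omega,\omega\rangle\,d\mu_g$ via Cartan's formula and the adjointness of $d^M/\delta^M$ and $\xi\wedge/\xi^\sharp\lrcorner$, and bound the quadratic term using $|\xi\wedge\omega|^2+|\xi^\sharp\lrcorner\omega|^2=|\xi|^2|\omega|^2$. The only difference is cosmetic bookkeeping (you keep the cross term as $\langle d^M\omega,\xi\wedge\omega\rangle+\langle\xi^\sharp\lrcorner\omega,\delta^M\omega\rangle$ while the paper moves all factors onto the first slot and uses $\mathrm{Re}(iz)=-\mathrm{Im}(z)$), and your sign tracking is consistent.
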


\begin{proof} Let $\omega$ be any $p$-form  in $\Omega^p(M,\CC)$. By the characterization of the first eigenvalue, we have for $\alpha=t\xi$
  $$\lambda^{\alpha}_{1,p}(M) \le \frac{\int_M (|d^{\alpha} \omega|^2 + |\delta^{\alpha} \omega|^2)d\mu_g}{\int_M |\omega|^2 d\mu_g} = \frac{\int_M (|d^M\omega + i t \xi \wedge \omega|^2 +|\delta^M \omega - i t \xi \lrcorner \omega|^2) d\mu_g}{\int_M |\omega|^2 d\mu_g}.$$
Now, we compute
$$ \int_M |d^M \omega + i t \xi \wedge \omega|^2 d\mu_g = \Vert d^M \omega \Vert_2^2 + 2t {\rm{Re}} \left( \int_M \langle d^M\omega, i \xi \wedge \omega \rangle d\mu_g\right) + t^2 \Vert \xi \wedge \omega \Vert_2^2 $$
and
$$ \int_M |\delta^M \omega - i t \xi \lrcorner \omega|^2 d\mu_g = \Vert \delta^M \omega \Vert_2^2 - 2t
{\rm{Re}} \left( \int_M \langle \delta^M \omega, i \xi \lrcorner \omega \rangle d\mu_g\right) + t^2 \Vert \xi \lrcorner \omega \Vert_2^2. $$
Adding both equations and using the Cartan formula $\mathcal{L}_X \omega=X \lrcorner d^M\omega + d^M(X \lrcorner \omega)$ for any vector field $X$ yields
  \begin{multline*}
     \int_M \left(|d^M \omega + i t \xi \wedge \omega|^2 +|\delta^M \omega - i t \xi \lrcorner \omega|^2\right)d\mu_g =\Vert d^M\omega\Vert_2^2+ \Vert \delta^M\omega\Vert_2^2\\-2t{\rm{Re}}\left(
     \int_M \left(\langle i \xi \lrcorner d^M\omega, \omega\rangle + \langle i d^M (\xi \lrcorner \omega), \omega \rangle\right) d\mu_g\right) + t^2 \int_M |\xi|^2 \cdot |\omega|^2 d\mu_g \\ = \Vert d^M\omega\Vert_2^2+ \Vert \delta^M\omega\Vert_2^2+ 2t {\rm{Im}} \left( \int_M
     \langle \mathcal{L}_\xi \omega, \omega \rangle d\mu_g\right) + t^2 \int_M |\xi|^2 \cdot |\omega|^2 d\mu_g.
  \end{multline*}
Choosing $\omega\in \Omega^p(M,\mathbb{C})$ to be an eigenform of $\Delta^M$ with respect to the eigenvalue $\lambda_{1,p}(M)$, we conclude that
  $$ \lambda_{1,p}^{\alpha}(M) \le \lambda_{1,p}(M) + \frac{2t}{\Vert \omega \Vert_2^2} {\rm{Im}} \left( \int_M \langle \mathcal{L}_\xi \omega, \omega \rangle d\mu_g\right) + t^2 \Vert \xi \Vert_\infty^2. $$
 This finishes the proof of the stated inequality. The last part is a direct consequence of the fact that when ${\rm Im} \left( \int_M \langle \mathcal{L}_{\xi} \omega, \omega \rangle d\mu_g\right)<0$  one can then always find positive small enough $t$  so that the r.h.s of the above inequality is strictly less than $ \lambda_{1,p}(M)$.
\end{proof}

\begin{remark}
  Note that the real and imaginary parts of a complex eigenform of $\Delta^M$ are both also eigenforms of $\Delta^M$ associated with the same eigenvalue. Therefore, in order to have
  $$  {\rm{Im}} \left( \int_M \langle \mathcal{L}_\xi \omega, \omega \rangle d\mu_g\right) \neq 0, $$
  the eigenspace $E_{\rm{min}}$ of $\Delta^M$ associated with the smallest eigenvalue $\lambda_{1,p}(M)$ needs to be at least $2$-dimensional. Of course, this higher dimensionality does not necessarily imply that this term is non-zero.
\end{remark}

In order to interpret the condition $ {\rm{Im}} \left( \int_M \langle \mathcal{L}_\xi \omega, \omega \rangle d\mu_g\right)<0$ in Theorem \ref{thm:eigtaylor}, we will consider the case when the vector field $\xi$ is Killing. 

\begin{corollary}\label{cor:diam}
    Let $(M^n,g)$ be a closed oriented Riemannian manifold, $\xi$ be a Killing vector field on $M$ and 
    $V:={\rm Ker}(\mathcal{L}_\xi|_{\Omega^p(M,\mathbb{C})})$ for some fixed $p$. If the eigenspace ${\rm Ker}(\Delta^M-\lambda_{1,p}(M){\rm Id})$ associated with the first eigenvalue $\lambda_{1,p}(M)$ is not included in $V$, 
    then the diamagnetic inequality does not hold. If the eigenspace ${\rm Ker}(\Delta^M-\lambda_{1,p}(M){\rm Id})$ is included in $V$, 
    then the diamagnetic inequality holds (at least for magnetic potentials $t \xi$ with small $|t| > 0$).
    \end{corollary}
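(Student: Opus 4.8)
The plan is to read off the sign of the first non-trivial coefficient in the expansion of $\lambda^{t\xi}_{1,p}(M)$ near $t=0$ from the data of the operator $i\mathcal{L}_\xi$ restricted to the eigenspace $E:={\rm Ker}(\Delta^M-\lambda_{1,p}(M){\rm Id})$, using Theorem \ref{thm:eigtaylor} for the upper bound and perturbation theory for the matching lower bound. First I would record two structural facts. Since $\xi$ is Killing it is divergence-free, so integrating the pointwise identity \eqref{eq:adjointlie} over the closed manifold $M$ gives $\int_M\langle\mathcal{L}_\xi\omega,\omega'\rangle d\mu_g=-\int_M\langle\omega,\mathcal{L}_\xi\omega'\rangle d\mu_g$; hence $\mathcal{L}_\xi$ is $L^2$-skew-adjoint, $i\mathcal{L}_\xi$ is $L^2$-self-adjoint, the number $\int_M\langle\mathcal{L}_\xi\omega,\omega\rangle d\mu_g$ is purely imaginary, and ${\rm Im}\left(\int_M\langle\mathcal{L}_\xi\omega,\omega\rangle d\mu_g\right)=-\langle i\mathcal{L}_\xi\omega,\omega\rangle_{L^2}$. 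Moreover $\mathcal{L}_\xi$ commutes with $\Delta^M$, so it preserves $E$, and $i\mathcal{L}_\xi|_E$ is a self-adjoint endomorphism of the finite-dimensional space $E$; because $\mathcal{L}_\xi$ maps real forms to real forms it is the complexification of a real skew-symmetric operator on the real eigenspace, so its nonzero eigenvalues occur in conjugate pairs $\pm i\mu$ with $\mu>0$, equivalently $i\mathcal{L}_\xi|_E$ has trace zero.

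For the first assertion ($E\not\subseteq V$, where $V={\rm Ker}(\mathcal{L}_\xi)$), the operator $i\mathcal{L}_\xi|_E$ is nonzero with trace zero, so it has a strictly positive eigenvalue; choosing a corresponding eigenform $\omega\in E$ gives $\langle i\mathcal{L}_\xi\omega,\omega\rangle_{L^2}>0$, that is ${\rm Im}\left(\int_M\langle\mathcal{L}_\xi\omega,\omega\rangle d\mu_g\right)<0$. The concluding part of Theorem \ref{thm:eigtaylor} then yields $\lambda^{t\xi}_{1,p}(M)<\lambda_{1,p}(M)$ for small $t>0$, so the diamagnetic inequality fails. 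This direction uses only the min-max upper bound and is immediate.

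The main work is the second assertion ($E\subseteq V$, i.e. $\mathcal{L}_\xi\omega=0$ for every $\omega\in E$), where a lower bound is needed that Theorem \ref{thm:eigtaylor} does not supply. Here I would use that, by \eqref{eq:relationkilling}, $\Delta^{t\xi}=\Delta^M-2it\mathcal{L}_\xi+t^2|\xi|^2$ is a quadratic, hence real-analytic, self-adjoint family in $t$ on the fixed Sobolev domain; by the Rellich--Kato theorem its eigenvalues near $\lambda_{1,p}(M)$ organise into analytic branches $\lambda_j(t)=\lambda_{1,p}(M)+c_j^{(1)}t+c_j^{(2)}t^2+\cdots$. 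The first-order coefficients $c_j^{(1)}$ are the eigenvalues of the compression of $-2i\mathcal{L}_\xi$ to $E$, which vanish. The point is that $E\subseteq V$ gives the stronger fact that $-2i\mathcal{L}_\xi$ annihilates $E$ outright, not merely that its compression vanishes, so the second-order coupling term (which sees $\mathcal{L}_\xi$ mapping $E$ into its orthogonal complement through the reduced resolvent) drops out entirely, leaving the $c_j^{(2)}$ equal to the eigenvalues of the compression of the multiplication operator $|\xi|^2$ to $E$, namely the numbers $\int_M|\xi|^2|\omega|^2 d\mu_g/\|\omega\|_2^2$ for the relevant eigenforms. If $\xi\not\equiv0$ its zero set has measure zero, so $\int_M|\xi|^2|\omega|^2 d\mu_g>0$ for every nonzero $\omega\in E$, whence every $c_j^{(2)}$ is strictly positive; the branches issuing from higher eigenvalues of $\Delta^M$ remain bounded away from $\lambda_{1,p}(M)$ for small $t$ by continuity, so $\lambda^{t\xi}_{1,p}(M)$ coincides with the smallest of the branches through $\lambda_{1,p}(M)$ and therefore satisfies $\lambda^{t\xi}_{1,p}(M)=\lambda_{1,p}(M)+c_{\min}t^2+o(t^2)>\lambda_{1,p}(M)$ for $0<|t|$ small, establishing the diamagnetic inequality. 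The case $\xi\equiv0$ is trivial.

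The step I expect to be the main obstacle is making the second-order analysis rigorous: verifying that $\Delta^{t\xi}$ is a self-adjoint analytic family of type (A), and then using $\mathcal{L}_\xi|_E=0$ together with $\mathcal{L}_\xi(E)\subseteq E$ to justify that the resolvent coupling term genuinely vanishes, so that the second-order coefficient reduces to the manifestly positive quantity $\int_M|\xi|^2|\omega|^2 d\mu_g$. Everything else — the skew-adjointness of $\mathcal{L}_\xi$, the $\pm$ pairing of its spectrum, and the first-order conclusion of Case 1 — is elementary once the structural facts of the first paragraph are in place.
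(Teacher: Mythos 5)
Your proposal is correct. The first half coincides with the paper's own argument: $\mathcal{L}_\xi$ preserves the first eigenspace $E={\rm Ker}(\Delta^M-\lambda_{1,p}(M){\rm Id})$, is $L^2$-skew-adjoint by \eqref{eq:adjointlie}, its eigenvalues on $E$ come in pairs $0,\pm i\beta$, and an eigenform with $\beta<0$ feeds into the last clause of Theorem \ref{thm:eigtaylor}; so for that part there is nothing to compare. The second half is where you take a genuinely different route. The paper never expands the eigenvalue branches in $t$: it exploits the \emph{global} invariant subspace $V={\rm Ker}(\mathcal{L}_\xi|_{\Omega^p(M,\mathbb{C})})$. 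By Proposition \ref{prop:deltaalphad} the self-adjoint operator $\Delta^{t\xi}$ preserves $V$ and $V^\perp$; on $V$ the identity \eqref{eq:relationkilling} collapses to $\Delta^{t\xi}=\Delta^M+t^2|\xi|^2$, so the min-max principle gives $\lambda_{1,p}^{t\xi}(V)\geq\lambda_{1,p}^{0}(V)=\lambda_{1,p}(M)$ for \emph{all} $t$ (the last equality because $E\subseteq V$); continuity of the eigenvalue maps and discreteness of the spectrum then force $\lambda_{1,p}^{t\xi}(M)=\lambda_{1,p}^{t\xi}(V)$ for small $|t|$, which is the claim. Your argument is instead local at $t=0$: second-order degenerate perturbation theory, with the key observation that $E\subseteq V$ makes $\mathcal{L}_\xi$ annihilate $E$ outright, so both the first-order compression and the reduced-resolvent coupling term vanish, leaving the quadratic coefficients equal to the eigenvalues of the compression of $|\xi|^2$ to $E$. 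Both proofs rest on \cite[Thm. VII.3.9]{Ka95}, but they buy different things: the paper's subspace argument is softer — no second-order formula is needed, and the lower bound on $V$ holds for every $t$, not only small $t$ — whereas yours is sharper, yielding the strict inequality $\lambda_{1,p}^{t\xi}(M)>\lambda_{1,p}(M)$ for small $t\neq 0$ together with the quadratic rate $c_{\min}t^2$, where $c_{\min}>0$ is the smallest eigenvalue of the compressed operator. Two details to nail down if you write this up: positivity of $\int_M|\xi|^2|\omega|^2\,d\mu_g$ for $0\neq\omega\in E$ uses that the zero set of a nonzero Killing field has measure zero, which requires $M$ connected (or, alternatively, unique continuation for Hodge eigenforms); and the type-(A) analyticity you flag follows because $\mathcal{L}_\xi$ and multiplication by $|\xi|^2$ are $\Delta^M$-bounded with relative bound zero on the fixed domain of $\Delta^M$.
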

  \begin{proof} 
The Laplacian $\Delta^M$ commutes with $\mathcal{L}_\xi$ and, thus, the Lie derivative $\mathcal{L}_\xi$ preserves the eigenspace ${\rm Ker}(\Delta^M-\lambda_{1,p}(M){\rm Id})$ which is of finite dimension. Relation \eqref{eq:adjointlie} says that the formal $L^2$-adjoint of $\mathcal{L}_\xi$ is equal to $-\mathcal{L}_\xi$. Hence, the matrix of $\mathcal{L}_\xi$ is skew-symmetric and, thus, the eigenvalues are of the form $0$ and $\pm i\beta$. Since by assumption the eigenspace ${\rm Ker}(\Delta^M-\lambda_{1,p}(M){\rm Id})$ is not in the kernel of $\mathcal{L}_\xi$, we can always find an eigenform $\omega$ of $\Delta^M$ such that $\mathcal{L}_\xi\omega =i\beta\omega$ for some $\beta$ with $\beta<0$ (if $\beta>0$, we choose its conjuguate $\overline\omega$). Hence, for such an eigenform, we deduce that $ {\rm{Im}} \left( \int_M \langle \mathcal{L}_\xi \omega, \omega \rangle d\mu_g\right)=\beta\int_M|\omega|^2 d\mu_g<0$. Therefore by Theorem \ref{thm:eigtaylor} the diamagnetic inequality does not hold for small positive $t$.  

To prove the second statement, we have from Relation \eqref{eq:relationkilling} that  $\Delta^\alpha=\Delta^M-2it\mathcal{L}_\xi+t^2|\xi|^2$ holds for $\alpha=t\xi$. Also, we know from Proposition \ref{prop:deltaalphad} that $\mathcal{L}_\xi\Delta^\alpha=\Delta^\alpha\mathcal{L}_\xi$. Therefore, the operator $\Delta^\alpha$ preserves the space $V$ as well as its orthogonal complement, by the fact that it is a self-adjoint operator. 
As $\Delta^M$ is perturbed analytically, the family $(\Delta^{t\xi})_t$ is an analytic family of self-adjoint operators with compact resolvent and therefore the Hilbert basis of $p$-eigenforms of $\Delta^M$ and their corresponding eigenvalues can be extended analytically in the perturbation parameter $t$ to a Hilbert basis of $p$-eigenforms of $\Delta^{t \xi}$ and their corresponding eigenvalues (see \cite[Thm. VII.3.9]{Ka95}). Since by assumption ${\rm Ker}(\Delta^M-\lambda_{1,p}(M){\rm Id})\subset V$ and the fact that the spectrum is discrete (with finite dimensional eigenspaces), we deduce that $\lambda_{1,p}^{0}(V)=\lambda_{1,p}(M)$, where $\lambda_{1,p}^{0}(V)$ denotes the lowest eigenvalue of $\Delta^M$ on $p$-forms, restricted to the invariant subspace $V$. Therefore, by choosing the analytic perturbation $\omega_t$ of any basis element $\omega$ in the $\Delta^M$-eigenspace corresponding to the eigenvalue $\lambda_{1,p}(M)$ and using the fact that $\omega_t$ is of unit $L^2$-norm, we obtain the estimate 
$$\int_M \langle \Delta^{t\xi}\omega_t,\omega_t\rangle d\mu_g=\int_M \langle \Delta^M\omega_t,\omega_t\rangle d\mu_g+t^2\int_M|\xi|^2|\omega_t|^2 d\mu_g \ge \lambda_{1,p}(M). $$
In the last inequality, we use the min-max principle for $\Delta^M$.
This implies that $\lambda^{t\xi}_{1,p}(V) \ge \lambda_{1,p}(M)$ for all $t$. The continuity of the maps $t\mapsto \lambda_{j,p}^{t\xi}(V)$ and $t\mapsto \lambda_{j,p}^{t\xi}(M)$ along with the fact that the eigenvalues are discrete and $\lambda_{1,p}^{0}(V)=\lambda_{1,p}(M)$ imply that $\lambda_{1,p}^{t\xi}(V)=\lambda_{1,p}^{t\xi}(M)$ for small $t$.  Hence, we deduce that $\lambda_{1,p}^{t\xi}(M)\geq \lambda_{1,p}(M)$ for small $|t|$.

\end{proof}

\color{black}

Below we consider the $3$-dimensional round sphere and show that the diamagnetic inequality is not satisfied for a suitable choice of magnetic potential. For more details on the computation, we refer to Appendix \ref{sec:berger}.
\begin{corollary} \label{cor:notdiamagineq}
  Let $(M=\mathbb{S}^3,g)$ be the $3$-dimensional unit sphere (centered at the origin) equipped with the canonical Riemannian metric $g$ of curvature $1$. Let $\xi=Y_2$ be the unit Killing vector field on $\mathbb{S}^3$ as in Appendix \ref{sec:berger}. Then, for small $t > 0$, we have, for $\alpha=tY_2$,
  $$ \lambda_{1,1}^{\alpha}(M) < \lambda_{1,1}(M),$$
 which means that the diamagnetic inequality does not hold in general for differential $1$-forms.
\end{corollary}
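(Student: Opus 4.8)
The plan is to recognize this corollary as a direct instance of Corollary \ref{cor:diam} applied to the Killing vector field $\xi = Y_2$ with $p=1$, so the bulk of the work is to pin down the first eigenvalue of the Hodge Laplacian on $1$-forms together with its eigenspace, and then to show that $\mathcal{L}_{Y_2}$ acts nontrivially on that eigenspace.

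First I would record the relevant spectral data. Since $H^1(\mathbb{S}^3)=0$, the first eigenvalue $\lambda_{1,1}(\mathbb{S}^3)$ is the smallest eigenvalue on exact $1$-forms, namely $\lambda_{1,1}'(\mathbb{S}^3)=3$ (the smallest nonzero Laplace eigenvalue on functions being $3$, while the co-exact value is $\lambda_{1,1}''(\mathbb{S}^3)=4$ by \cite{Paq79}). The associated eigenspace ${\rm Ker}(\Delta^M-3\,{\rm Id})$ on $1$-forms contains the exact forms $d^M u$ and $d^M v$ from Appendix \ref{sec:berger}.

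Next I would use the Killing relation \eqref{eq:relationkilling}, which for $\alpha=tY_2$ and $\vert Y_2\vert=1$ reads $\Delta^{tY_2}=\Delta^M-2it\mathcal{L}_{Y_2}+t^2$, together with the spectral identities $\Delta^{tY_2}d^M u=(3+2t+t^2)d^M u$ and $\Delta^{tY_2}d^M v=(3-2t+t^2)d^M v$ computed in Appendix \ref{sec:berger}. Comparing both sides and using that $\Delta^M d^M u=3\,d^M u$ and $\Delta^M d^M v=3\,d^M v$ forces $\mathcal{L}_{Y_2}d^M u=i\,d^M u$ and $\mathcal{L}_{Y_2}d^M v=-i\,d^M v$. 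In particular, the eigenspace ${\rm Ker}(\Delta^M-3\,{\rm Id})$ is not contained in $V={\rm Ker}(\mathcal{L}_{Y_2}\vert_{\Omega^1(M,\mathbb{C})})$.

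Finally, Corollary \ref{cor:diam} then immediately yields that the diamagnetic inequality fails for small $t>0$. Equivalently, and more explicitly, one may instead apply Theorem \ref{thm:eigtaylor} with the test eigenform $\omega=d^M v$: since $\mathcal{L}_{Y_2}d^M v=-i\,d^M v$, relation \eqref{eq:adjointlie} gives ${\rm Im}\left(\int_M\langle\mathcal{L}_{Y_2}d^M v,d^M v\rangle\,d\mu_g\right)=-\Vert d^M v\Vert_2^2<0$, so the linear-in-$t$ term in \eqref{eq:eigvalcomp} is strictly negative and dominates the quadratic term for small $t>0$, giving $\lambda_{1,1}^{tY_2}(\mathbb{S}^3)<\lambda_{1,1}(\mathbb{S}^3)=3$. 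There is essentially no obstacle at the level of the corollary itself; the genuine content lies in the explicit spectral computation on $\mathbb{S}^3$ carried out in Appendix \ref{sec:berger}, which supplies the eigenvalues $3\pm 2t+t^2$ and hence the nonvanishing imaginary part that breaks the inequality.
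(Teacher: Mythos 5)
Your proposal is correct and follows essentially the same route as the paper: both invoke Corollary \ref{cor:diam} with the first-eigenvalue eigenform $\omega=d^M v$ (eigenvalue $\lambda_{1,1}(\mathbb{S}^3)=3$) and the key fact $\mathcal{L}_{Y_2}d^M v=-i\,d^M v$. The only cosmetic difference is that the paper obtains this fact directly from the identity $Y_2(v)=-iv$ in \eqref{eq:Y2phi}, whereas you back-solve it from the magnetic eigenvalues $3\pm 2t+t^2$ of Appendix \ref{sec:dudvalphaeig} via \eqref{eq:relationkilling} — which is fine, since those eigenvalues rest on the same identity.
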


 \begin{proof} 
 Hence, from Corollary \ref{cor:diam}, we just need to find a $1$-eigenform of the Laplacian $\Delta^M$ which is not in the kernel of $\mathcal{L}_\xi$. 
 For this, we use the computations done in Appendix \ref{sec:dudvalphaeig}. Let $(a,b), (z_1,z_2) \in \mathbb{C}^2 \backslash (0,0)$ and set
  $$ v(z_1,z_2) = b\bar z_1 - a \bar z_2. $$
Recall that $\Delta^M v = 3 v$ and that $3$ is the smallest eigenvalue of $\Delta^M$ associated to the $1$-form $\omega:=d^M v$. Hence, we compute
$$\mathcal{L}_{\xi} d^Mv=d^M(\mathcal{L}_{\xi}v)=-id^M v.$$
In the last equality, we use the following consequence of the identity \eqref{eq:Y2phi}:
$ \mathcal{L}_{\xi}v = Y_2(v)= - i v.$
Hence the result follows from Corollary \ref{cor:diam}. 
\end{proof}
 
\section{The magnetic Hodge Laplacian on manifolds with boundary}

\subsection{A magnetic Green's formula for differential forms}
\label{subsec:greensformula}

Let $(M^n,g)$ be a compact oriented Riemannian manifold with smooth boundary $\partial M$ and let $\alpha \in \Omega^1(M)$. We denote by $\nu$ the unit inward normal vector field to $\partial M$ and by $\iota:\partial M\to M$ the canonical injection. For any pair of complex differential forms $\omega_1$ and $\omega_2$, the magnetic Stokes formula
$$\int_M \langle d^\alpha\omega_1,\omega_2\rangle d\mu_g=\int_M\langle\omega_1,\delta^\alpha\omega_2\rangle d\mu_g-\int_{\partial M}\langle \iota^*\omega_1,\nu\lrcorner\omega_2\rangle d\mu_g$$
holds. Here $\iota^*$ is the pull-back of differential forms on $M$ to the boundary. Indeed, it can be deduced from the usual Stokes formula and the expression of $d^\alpha$ and $\delta^\alpha$.
As a consequence, we get
\begin{eqnarray}\label{eq:productlaplacian}
\int_M\langle\Delta^\alpha\omega_1,\omega_2\rangle d\mu_g&=&\int_M \langle d^\alpha\delta^\alpha\omega_1+\delta^\alpha d^\alpha\omega_1,\omega_2\rangle d\mu_g\nonumber\\
&=&\int_M\langle\delta^\alpha\omega_1, \delta^\alpha\omega_2\rangle d\mu_g-\int_{\partial M}\langle\iota^*(\delta^\alpha\omega_1),\nu\lrcorner\omega_2\rangle d\mu_g\nonumber\\&&+\int_M\langle d^\alpha\omega_1,d^\alpha\omega_2\rangle d\mu_g+\int_{\partial M}\langle\nu\lrcorner d^\alpha\omega_1,\iota^*\omega_2\rangle d\mu_g\\
&=&\int_M\langle\omega_1,\Delta^\alpha\omega_2\rangle d\mu_g+\int_{\partial M}\langle\nu\lrcorner\omega_1,\iota^*(\delta^\alpha\omega_2)\rangle d\mu_g\nonumber\\
&&-\int_{\partial M}\langle\iota^*(\delta^\alpha\omega_1),\nu\lrcorner\omega_2\rangle d\mu_g-\int_{\partial M}\langle\iota^*\omega_1,\nu\lrcorner d^\alpha\omega_2\rangle d\mu_g\nonumber\\&&+\int_{\partial M}\langle\nu\lrcorner d^\alpha\omega_1,\iota^*\omega_2\rangle d\mu_g. \nonumber
\end{eqnarray}

Hence, we deduce that the magnetic Laplacian on smooth differential forms with Dirichlet boundary condition is self-adjoint and, being elliptic, it has a discrete spectrum that consists of real nonnegative eigenvalues.

\subsection{A magnetic Reilly formula}

In the following, we establish a Reilly formula for the magnetic Hodge Laplacian on a compact oriented Riemannian manifold $(M^n,g)$ with smooth boundary $\partial M$ as in \cite[Thm. 3]{RS:11}. (Note that the dimension of the manifold in \cite{RS:11} is $n+1$ in contrast to our setting).
 \begin{theorem} \label{thm:reilly}
Let $(M^n,g)$ be a compact oriented Riemannian manifold with smooth boundary $\partial M$ and let $\alpha \in \Omega^1(M)$.  Then we have for any $\omega \in \Omega^p(M,\mathbb{C})$, $p \ge 1$, the magnetic Reilly formula
\begin{multline*}
    \int_M (|d^\alpha \omega|^2 +|\delta^\alpha \omega|^2)d\mu_g = \int_M |\nabla^\alpha \omega|^2 d\mu_g+ \int_M\langle \mathcal{B}^{[p],\alpha}\omega,\omega \rangle d\mu_g\\ + 2  {\rm{Re}}\left(  \int_{\partial M} \langle d^{\alpha^T}(\nu \lrcorner \omega), \iota^*\omega \rangle d\mu_g\right) + \int_{\partial M} \langle \mathrm{II}^{[p]} \iota^* \omega,\iota^* \omega \rangle d\mu_g\\+ \int_{\partial M} \langle \mathrm{II}^{[n-p]} \iota^* (* \omega),\iota^* (* \omega) \rangle d\mu_g
\end{multline*}
where 
$\alpha^T=\iota^*\alpha \in \Omega^1(\partial M)$ is the tangential component of $\alpha$,
${\rm{II}} = -\nabla^M \nu$ is the Weingarten tensor of the boundary and
$d^{\alpha^T}:= d^{\partial M}+ i \alpha^T\wedge$. Here ${\rm II}^{[p]}$ is the extension of ${\rm II}$ as defined in  \eqref{eq:extension}.
\end{theorem}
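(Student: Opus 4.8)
The plan is to derive the magnetic Reilly formula by combining the already-established magnetic Bochner--Weitzenb\"ock formula \eqref{eq:bochnermagnetic} with the magnetic Green's formula \eqref{eq:productlaplacian}, and then to identify the resulting boundary integral by a local computation in a frame adapted to $\partial M$, following the strategy of Raulot--Savo \cite{RS:11} while keeping track of the magnetic terms.

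First I would record the two relevant integration-by-parts identities for a fixed $\omega \in \Omega^p(M,\CC)$. Since the magnetic connection $\nabla^\alpha_X = \nabla^M_X + i\alpha(X)$ is metric with respect to the Hermitian product (the imaginary terms cancel because $\alpha$ is real), a pointwise computation in a frame with $\nabla^M e_j = 0$ at the base point gives $\langle (\nabla^\alpha)^*\nabla^\alpha\omega,\omega\rangle = |\nabla^\alpha\omega|^2 - \sum_j e_j\langle\nabla^\alpha_{e_j}\omega,\omega\rangle$, and the divergence theorem (with $\nu$ the inward normal) converts the last sum into a boundary integral, yielding
$$\int_M \langle (\nabla^\alpha)^*\nabla^\alpha\omega,\omega\rangle\, d\mu_g = \int_M |\nabla^\alpha\omega|^2\, d\mu_g + \int_{\partial M}\langle \nabla^\alpha_\nu\omega,\omega\rangle\, d\mu_g.$$
On the other hand, setting $\omega_1 = \omega_2 = \omega$ in \eqref{eq:productlaplacian} expresses $\int_M\langle\Delta^\alpha\omega,\omega\rangle$ through $\int_M(|d^\alpha\omega|^2+|\delta^\alpha\omega|^2)$ and the boundary terms $\langle\iota^*(\delta^\alpha\omega),\nu\lrcorner\omega\rangle$ and $\langle\nu\lrcorner d^\alpha\omega,\iota^*\omega\rangle$. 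Equating the two evaluations of $\int_M\langle\Delta^\alpha\omega,\omega\rangle$ via \eqref{eq:bochnermagnetic} reduces the theorem to showing that the boundary integrand
$$B := \langle\nabla^\alpha_\nu\omega,\omega\rangle + \langle\iota^*(\delta^\alpha\omega),\nu\lrcorner\omega\rangle - \langle\nu\lrcorner d^\alpha\omega,\iota^*\omega\rangle$$
integrates over $\partial M$ to the three stated boundary terms.

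The heart of the proof is then the pointwise boundary identity for $B$. I would work in a local orthonormal frame $e_1,\dots,e_{n-1},e_n=\nu$ with $e_1,\dots,e_{n-1}$ tangent to $\partial M$ and geodesic at the base point, and with $\nu$ extended so that $\nabla^M_\nu\nu=0$, so that $\nabla^M_{e_i}\nu = -\mathrm{II}(e_i)$ for $i\le n-1$. Using the local expressions \eqref{eq:localddelta} together with the decomposition $\omega = \iota^*\omega + \nu^\flat\wedge(\nu\lrcorner\omega)$ along $\partial M$, I would expand $\nu\lrcorner d^\alpha\omega$ and $\iota^*(\delta^\alpha\omega)$ in terms of tangential magnetic derivatives of $\iota^*\omega$ and $\nu\lrcorner\omega$. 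Commuting $\nu\lrcorner$ and $\nu^\flat\wedge$ through $\nabla^\alpha_{e_i}$ is where the Weingarten map enters, via $\nabla^\alpha_{e_i}(\nu\lrcorner\omega) = (\nabla^M_{e_i}\nu)\lrcorner\omega + \nu\lrcorner\nabla^\alpha_{e_i}\omega$; collecting these contributions produces $\mathrm{II}^{[p]}\iota^*\omega$ and, after applying the Hodge-star relations of Lemma \ref{lem:dalphadeltaalpha} and Corollary \ref{cor:maglapstar} to the normal part, the term $\mathrm{II}^{[n-p]}\iota^*(*\omega)$. The tangential magnetic derivatives of $\nu\lrcorner\omega$ reassemble into $d^{\alpha^T}(\nu\lrcorner\omega)$ precisely because the normal component $\alpha(\nu)$ cancels between $\langle\nabla^\alpha_\nu\omega,\omega\rangle$ and the two co/differential boundary terms, leaving only the tangential part $\alpha^T = \iota^*\alpha$; the Hermitian (rather than symmetric bilinear) pairing is what forces the cross term to appear as $2\,\mathrm{Re}(\cdots)$.

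The main obstacle I anticipate is exactly this boundary bookkeeping: controlling the signs induced by the inward-normal convention, verifying that the normal magnetic component $i\alpha(\nu)$ cancels cleanly so that only $d^{\alpha^T}$ survives, and correctly recasting the contribution of the normal part $\nu\lrcorner\omega$ into the dual term involving $\mathrm{II}^{[n-p]}$ and $\iota^*(*\omega)$ by Hodge duality. The bulk terms and the cancellation of the rough-Laplacian boundary integral against the Hodge-Laplacian boundary integrals are routine once the framing is fixed; the magnetic modifications contribute only first-order terms that, by the metricity of $\nabla^\alpha$ and the reality of $\alpha$, organize themselves into $d^{\alpha^T}$ and do not affect the curvature term beyond the already-known identity $\mathcal{B}^{[p],\alpha} = \mathcal{B}^{[p]} - iA^{[p],\alpha}$ of Lemma \ref{lem:magneticbochner}.
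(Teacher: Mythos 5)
Your proposal follows essentially the same route as the paper: integrate the magnetic Bochner--Weitzenb\"ock formula \eqref{eq:bochnermagnetic}, compare it with the magnetic Green's formula \eqref{eq:productlaplacian}, and identify the resulting boundary integrand via the magnetic analogues of the Raulot--Savo boundary identities, with the $2\,\mathrm{Re}(\cdots)$ term arising from adjointness of $d^{\alpha^T}$ and $\delta^{\alpha^T}$ on the closed boundary $\partial M$. The only slight inaccuracy is your attribution of the $\mathrm{II}^{[n-p]}\iota^*(*\omega)$ term to Lemma \ref{lem:dalphadeltaalpha} and Corollary \ref{cor:maglapstar}: what is actually needed there is the algebraic boundary identity $*_{\partial M}\mathrm{II}^{[p-1]}+\mathrm{II}^{[n-p]}*_{\partial M}=(n-1)H\,*_{\partial M}$ from \cite{RS:11}, applied to $\nu\lrcorner\omega$ together with $\iota^*(*\omega)=\pm *_{\partial M}(\nu\lrcorner\omega)$, which also absorbs the mean-curvature terms that your sketch does not mention explicitly.
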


\begin{proof}
The proof follows the same lines as in \cite[Thm. 3]{RS:11}. Indeed, we just need to integrate the magnetic Bochner-Weitzenb\"ock formula \eqref{eq:bochnermagnetic} over the manifold $M$. From Equation \eqref{eq:productlaplacian}, we have that
\begin{eqnarray*}
\int_M\langle\Delta^\alpha\omega,\omega\rangle d\mu_g
&=&\int_M|\delta^\alpha\omega|^2 d\mu_g-\int_{\partial M}\langle\iota^*(\delta^\alpha\omega),\nu\lrcorner\omega\rangle d\mu_g+\int_M|d^\alpha\omega|^2 d\mu_g\\&&+\int_{\partial M}\langle\nu\lrcorner d^\alpha\omega,\iota^*\omega\rangle d\mu_g.
\end{eqnarray*}
Notice here that $\int_M\langle\Delta^\alpha\omega,\omega\rangle d\mu_g$ is not necessarily real. Now from \cite[Lemma 18]{RS:11} and the expression of $\delta^\alpha$, one can easily deduce the following
$$\delta^{\alpha^T}(\iota^*\omega)=\iota^*(\delta^\alpha\omega)+\nu\lrcorner\nabla^\alpha_\nu\omega+{\rm{II}}^{[p-1]}(\nu\lrcorner\omega)-(n-1)H\nu\lrcorner\omega$$
where the mean curvature $H:= \frac{1}{n-1} {\rm{tr}} (\rm{II})$ of $\partial M \subset M$. Also, using the expression of $d^\alpha$, we have that,
$$d^{\alpha^T}(\nu\lrcorner\omega)=-\nu\lrcorner d^\alpha\omega+\iota^*(\nabla^\alpha_\nu\omega)-{\rm{II}}^{[p]}(\iota^*\omega).$$
Therefore, we arrive at
\begin{eqnarray*}
\int_M\langle\Delta^\alpha\omega,\omega\rangle d\mu_g&=&\int_M(|d^\alpha\omega|^2+|\delta^\alpha\omega|^2) d\mu_g\\
&-&\int_{\partial M} \langle \delta^{\alpha^T}(\iota^*\omega)-\nu\lrcorner\nabla^\alpha_\nu\omega-{\rm{II}}^{[p-1]}(\nu\lrcorner\omega)+(n-1)H\nu\lrcorner\omega,\nu\lrcorner\omega\rangle d\mu_g\\
&+&\int_{\partial M}\langle -d^{\alpha^T}(\nu\lrcorner\omega)+\iota^*(\nabla^\alpha_\nu\omega)-{\rm{II}}^{[p]}(\iota^*\omega),\iota^*\omega\rangle d\mu_g\\
&=&\int_M(|d^\alpha\omega|^2+|\delta^\alpha\omega|^2) d\mu_g-2 {\rm{Re}}\left( \int_{\partial M} \langle d^{\alpha^T}(\nu\lrcorner\omega),\iota^*\omega\rangle d\mu_g \right)\\&+&\int_{\partial M}\langle\nabla^\alpha_\nu\omega,\omega\rangle d\mu_g+\int_{\partial M}\langle{\rm{II}}^{[p-1]}\nu\lrcorner\omega,\nu\lrcorner\omega\rangle d\mu_g\\&-&\int_{\partial M}(n-1)H|\nu\lrcorner\omega|^2 d\mu_g-\int_{\partial M}\langle{\rm{II}}^{[p]}\iota^*\omega,\iota^*\omega\rangle d\mu_g.
\end{eqnarray*}
In the above equality, we use the fact that $\nabla^\alpha_\nu\omega=\iota^*(\nabla^\alpha_\nu\omega)+\nu\wedge(\nu\lrcorner\nabla^\alpha_\nu\omega)$ at any point on the boundary. Now since the identity $*_{\partial M}{\rm{II}}^{[p-1]}+{\rm{II}}^{[n-p]}*_{\partial M}=(n-1)H*_{\partial M}$ holds on $(p-1)$-forms on $\partial M$ \cite{RS:11}, we apply it to the form $\nu\lrcorner\omega$ and take the Hermitian product with $*_{\partial M}(\nu\lrcorner\omega)$. This leads to the following
$$\langle {\rm{II}}^{[p-1]}\nu\lrcorner\omega,\nu\lrcorner\omega\rangle+\langle {\rm{II}}^{[n-p]}\iota^*(*\omega),\iota^*(*\omega)\rangle=(n-1)H|\nu\lrcorner\omega|^2,$$
where we also use that $\iota^*(*\omega)=\pm *_{\partial M}(\nu\lrcorner\omega)$. Hence, after taking the real part, the above equation reduces to
\begin{eqnarray}\label{eq:intlapla}
{\rm{Re}}\left(\int_M\langle\Delta^\alpha\omega,\omega\rangle d\mu_g\right) =\int_M(|d^\alpha\omega|^2+|\delta^\alpha\omega|^2) d\mu_g-2 {\rm{Re}}\left( \int_{\partial M} \langle d^{\alpha^T}(\nu\lrcorner\omega),\iota^*\omega\rangle d\mu_g\nonumber \right)\\+{\rm Re}\left(\int_{\partial M} \langle\nabla^\alpha_\nu\omega,\omega\rangle d\mu_g\right) -\int_{\partial M}\langle{\rm{II}}^{[n-p]}\iota^*(*\omega),\iota^*(*\omega)\rangle d\mu_g\nonumber\\-\int_{\partial M}\langle{\rm{II}}^{[p]}\iota^*\omega,\iota^*\omega\rangle d\mu_g. \nonumber\\
\end{eqnarray}
Now, taking the Hermitian product of \eqref{eq:bochnermagnetic} with $\omega$, integrating over $M$ and taking the real part yields
\begin{eqnarray}\label{eq:nablastar}
{\rm{Re}}\left(\int_M\langle\Delta^\alpha\omega,\omega\rangle  d\mu_g\right) ={\rm{Re}}\left(\int_M\langle(\nabla^\alpha)^*\nabla^\alpha\omega,\omega\rangle d\mu_g\right) +\int_M\langle\mathcal{B}^{[p],\alpha}\omega,\omega\rangle d\mu_g\nonumber\\=\frac{1}{2}\int_M\Delta^M(|\omega|^2)d\mu_g+\int_M|\nabla^\alpha\omega|^2d\mu_g+\int_M\langle\mathcal{B}^{[p],\alpha}\omega,\omega\rangle d\mu_g\nonumber\nonumber\\
=\frac{1}{2}\int_{\partial M}\frac{\partial}{\partial\nu}(|\omega|^2)d\mu_g+\int_M|\nabla^\alpha\omega|^2d\mu_g+\int_M\langle\mathcal{B}^{[p],\alpha}\omega,\omega\rangle d\mu_g\nonumber\nonumber\\
={\rm{Re}}\left(\int_{\partial M}\langle\nabla^\alpha_\nu\omega,\omega\rangle d\mu_g\right) +\int_M|\nabla^\alpha\omega|^2d\mu_g+\int_M\langle\mathcal{B}^{[p],\alpha}\omega,\omega\rangle d\mu_g\nonumber.\\
\end{eqnarray}
The second equality is obtained by taking the real part of the pointwise identity $\langle(\nabla^\alpha)^*\nabla^\alpha\omega,\omega\rangle=-\sum_{i=1}^ne_i(\langle\nabla^\alpha_{e_i}\omega,\omega\rangle)+|\nabla^\alpha\omega|^2$ valid at any point such that $\nabla^M e_i=0$ and then using that ${\rm Re}(\langle\nabla^\alpha_X\omega,\omega\rangle)=\frac{1}{2}X(|\omega|^2)$ for any real vector field $X$. Comparing Equation \eqref{eq:intlapla} with Equation \eqref{eq:nablastar} yields the desired magnetic Reilly formula.
\end{proof}

Note that when $p=1$, by taking $\omega=d^\alpha f$ for any smooth complex-valued function $f$ and using the fact that $\mathcal{B}^{[1],\alpha}={\rm Ric}^M+iA^\alpha$ (here $A^{[1],\alpha}=-A^\alpha$ since $A^\alpha$ is skew-symmetric), the Reilly formula in Theorem \ref{thm:reilly} reduces to the one stated in \cite[Cor. 4.2]{ELMP:16} for manifolds without boundary and to \cite[Thm. 1.2]{HK:18} for manifolds with boundary.

\section{Eigenvalue estimates for the magnetic Hodge Laplacian on manifolds with boundary}

\subsection{A magnetic Raulot-Savo estimate}

In the following, we will estimate the first eigenvalue of the magnetic Laplacian on the boundary of an oriented Riemannian manifold in terms of the so-called $p$-curvatures as in \cite[Thm. 1]{RS:11}. We mainly follow and refer to \cite{RS:11} for further details. We consider a Riemannnian manifold $(M^n,g)$ with smooth boundary $\partial M$, and denote by $\eta_1(x)\leq\ldots\leq\eta_{n-1}(x)$ the eigenvalues of the Weingarten tensor ${\rm{II}} = -\nabla^M \nu$ at any point $x\in \partial M$. Here, as before, $\nu$ is the inward unit normal vector field to the boundary. For any $p\in\{1,\ldots,n-1\}$, the $p$-curvatures $\sigma_p(x)$ are defined as $\sigma_p(x):=\eta_1(x)+\ldots\eta_p(x)$ and we set $$\sigma_p(\partial M)=\mathop{\rm inf}\limits_{x\in \partial M}(\sigma_p(x)).$$
From Inequality \eqref{eq:upperbounda}, we have the following estimates
\begin{equation}\label{eq:inequweingarten}
\langle {\rm II}^{[p]}\omega,\omega\rangle\geq \sigma_p(\partial M) |\omega|^2 \quad\text{and}\quad \langle {\rm II}^{[p]}\omega,\omega\rangle\leq (\sigma_{n-1}(\partial M)-\sigma_{n-1-p}(\partial M)) |\omega|^2,
\end{equation}
for any $\omega\in \Omega^p(\partial M)$. Recall here that ${\rm II}^{[p]}$ is the canonical extension of $ {\rm II}$ to differential $p$-forms as in Equation \eqref{eq:extension}. Also, it is not difficult to check the following inequality $\frac{\sigma_p(x)}{p}\leq \frac{\sigma_q(x)}{q}$, for $p\leq q$, at any point $x$ on the boundary with equality if and only if $\eta_1(x)=\eta_2(x)=\ldots=\eta_q(x)$.

On manifolds with boundary, there are two notions of cohomology groups. We briefly recall them: The absolute cohomology group $H_A^p(M)$ which is defined  as the set of harmonic forms on $M$ satisfying the absolute boundary conditions, that is for any $p\in\{1,\ldots,n\}$,
$$H_A^p(M):=\{\omega\in \Omega^p(M,\mathbb{C})|\,\, d^M\omega=\delta^M\omega=0\,\, \text{on}\,\, M\,\, \text{and}\,\, \nu\lrcorner\omega=0\,\, \text{on}\,\, \partial M\}.$$
By Poincar\'e duality, the absolute cohomology group $H_A^p(M)$ is isomorphic to the relative cohomology group $H_R^{n-p}(M)$ which is defined as
$$H_R^p(M):=\{\omega\in \Omega^p(M,\mathbb{C})|\,\, d^M\omega=\delta^M\omega=0\,\, \text{on}\,\, M\,\, \text{and}\,\, \iota^*\omega=0\,\, \text{on}\,\, \partial M\}.$$
In \cite[Thm. 4]{RS:11}, the authors provide geometric obstructions to the vanishing of these cohomologies using the Reilly formula. Namely, these conditions are related to the Bochner operator on $M$ and to the $p$-curvatures of the boundary. Following the same idea, we will use the magnetic Reilly formula to deduce a similar vanishing result on the absolute cohomology groups by requiring a condition on the magnetic Bochner operator $\mathcal{B}^{[p],\alpha}$. We have the following result.
\begin{proposition}
Let $(M^n,g)$ be a compact Riemannian manifold with smooth boundary and let $\alpha$ be a differential $1$-form on $M$. Assume that $\mathcal{B}^{[p],\alpha}\geq |\alpha|^2$ and that $\sigma_p(\partial M)>0$. Then, $H_A^p(M)=0$.
\end{proposition}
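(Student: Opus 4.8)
The plan is to feed an arbitrary element of $H_A^p(M)$ into the magnetic Reilly formula of Theorem \ref{thm:reilly} and argue that the hypotheses force it to vanish. Take $\omega \in H_A^p(M)$, so that by definition $d^M\omega = \delta^M\omega = 0$ on $M$ and $\nu\lrcorner\omega = 0$ on $\partial M$. First I would rewrite the left-hand side: from $d^\alpha = d^M + i\alpha\wedge$ and $\delta^\alpha = \delta^M - i\alpha^\sharp\lrcorner$ we get $d^\alpha\omega = i\alpha\wedge\omega$ and $\delta^\alpha\omega = -i\alpha^\sharp\lrcorner\omega$, and the pointwise identity $|\alpha\wedge\omega|^2 + |\alpha^\sharp\lrcorner\omega|^2 = |\alpha|^2|\omega|^2$ (used already in the proof of Theorem \ref{thm:CS}) shows that $\int_M(|d^\alpha\omega|^2 + |\delta^\alpha\omega|^2)\,d\mu_g = \int_M |\alpha|^2|\omega|^2\,d\mu_g$.

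Next I would inspect the boundary terms of Theorem \ref{thm:reilly}. The condition $\nu\lrcorner\omega = 0$ makes $d^{\alpha^T}(\nu\lrcorner\omega) = 0$, killing the term $2\,{\rm Re}\int_{\partial M}\langle d^{\alpha^T}(\nu\lrcorner\omega),\iota^*\omega\rangle\,d\mu_g$, and since $\iota^*(*\omega) = \pm *_{\partial M}(\nu\lrcorner\omega) = 0$ it also kills the $\mathrm{II}^{[n-p]}$ term. The formula thus collapses to
$$ \int_M |\alpha|^2|\omega|^2\,d\mu_g = \int_M |\nabla^\alpha\omega|^2\,d\mu_g + \int_M \langle\mathcal{B}^{[p],\alpha}\omega,\omega\rangle\,d\mu_g + \int_{\partial M}\langle\mathrm{II}^{[p]}\iota^*\omega,\iota^*\omega\rangle\,d\mu_g, $$
and after rearranging,
$$ \int_M\!\big(\langle\mathcal{B}^{[p],\alpha}\omega,\omega\rangle - |\alpha|^2|\omega|^2\big)\,d\mu_g + \int_M|\nabla^\alpha\omega|^2\,d\mu_g + \int_{\partial M}\langle\mathrm{II}^{[p]}\iota^*\omega,\iota^*\omega\rangle\,d\mu_g = 0. $$
Here $\mathcal{B}^{[p],\alpha}$ is Hermitian (since $iA^{[p],\alpha}$ is symmetric), so its term is real and, by the hypothesis $\mathcal{B}^{[p],\alpha}\geq|\alpha|^2$, nonnegative; the middle term is manifestly nonnegative; and by \eqref{eq:inequweingarten} together with $\sigma_p(\partial M)>0$ the boundary integrand obeys $\langle\mathrm{II}^{[p]}\iota^*\omega,\iota^*\omega\rangle \geq \sigma_p(\partial M)|\iota^*\omega|^2 \geq 0$.

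Since three nonnegative quantities sum to zero, each vanishes. The boundary term with $\sigma_p(\partial M)>0$ gives $\iota^*\omega = 0$ on $\partial M$, which together with the absolute boundary condition $\nu\lrcorner\omega = 0$ forces both the tangential and normal parts of $\omega$ to vanish, so $\omega = 0$ on $\partial M$; the bulk term gives $\nabla^\alpha\omega = 0$, i.e. $\nabla^M_X\omega = -i\alpha(X)\omega$ for all $X$. The last step upgrades this to $\omega\equiv 0$: because $\alpha$ is real, $X(|\omega|^2) = 2\,{\rm Re}\langle\nabla^M_X\omega,\omega\rangle = 2\,{\rm Re}\big(-i\alpha(X)|\omega|^2\big) = 0$, so $|\omega|$ is locally constant and, vanishing on the (nonempty) boundary, is identically zero on each component meeting $\partial M$. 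I expect this final unique-continuation step to be the only delicate point — making precise that a magnetic-parallel form has constant pointwise norm, and ensuring connectedness so that the boundary vanishing propagates into the interior; the rest is a direct substitution into the Reilly formula followed by a sign count.
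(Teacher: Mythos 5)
Your proposal is correct, and up to the sign count it coincides with the paper's argument: both feed $\omega \in H_A^p(M)$ into the magnetic Reilly formula, use $|d^\alpha\omega|^2+|\delta^\alpha\omega|^2=|\alpha|^2|\omega|^2$, observe that the two boundary terms involving $\nu\lrcorner\omega$ drop out, and then invoke the three positivity hypotheses to force equality everywhere. The difference lies in the last step. The paper only extracts the boundary information: equality gives $\iota^*\omega=0$, hence $\omega=0$ on $\partial M$, and then it concludes by citing Ann\'e's Dirichlet principle for harmonic forms (the reference \cite{Ann:89}), i.e.\ a unique continuation theorem: a form with $d^M\omega=\delta^M\omega=0$ vanishing on the boundary vanishes identically. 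You instead exploit the interior information that the paper discards, namely $\nabla^\alpha\omega=0$, and note that since $\alpha$ is real this forces $X(|\omega|^2)=2\,\mathrm{Re}\bigl(-i\alpha(X)|\omega|^2\bigr)=0$, so $|\omega|$ is locally constant and must vanish on any component meeting the boundary. Your route is more elementary and self-contained --- it replaces a genuine unique continuation theorem by a one-line computation made possible by the equality case of the Reilly formula --- whereas the paper's route has the (minor) advantage of needing only the boundary vanishing, and so is the argument one would reach for if the bulk term had not been available. Both arguments, as you correctly flag, implicitly require $M$ to be connected (or every component to meet $\partial M$); this hypothesis is tacit in the paper as well, and is genuinely needed, since a closed component could carry nonzero harmonic $p$-forms compatible with all the stated hypotheses.
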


\begin{proof} Let $\omega\in \Omega^p(M,\mathbb{C})$ be an element in $H_A^p(M)$. Applying the magnetic Reilly formula to $\omega$ and using the fact that $|d^\alpha\omega|^2+|\delta^\alpha\omega|^2=|\alpha|^2|\omega|^2$  yields the following:
$$
\int_M |\alpha|^2|\omega|^2 d\mu_g=\int_M |\nabla^\alpha \omega|^2 d\mu_g+ \int_M\langle \mathcal{B}^{[p],\alpha}\omega,\omega \rangle d\mu_g + \int_{\partial M} \langle \mathrm{II}^{[p]} \iota^* \omega,\iota^* \omega \rangle d\mu_g.$$
Now, the fact that $|\nabla^\alpha\omega|^2\geq 0$, the condition on $\mathcal{B}^{[p],\alpha}$ and Inequality \eqref{eq:inequweingarten} allow us to deduce that
\begin{eqnarray*}
\int_M |\alpha|^2|\omega|^2 d\mu_g&\geq&\int_M|\alpha|^2|\omega|^2 d\mu_g + \sigma_p(\partial  M)\int_{\partial M} |\iota^*\omega|^2  d\mu_g\\
&=&\int_M|\alpha|^2|\omega|^2 d\mu_g + \sigma_p(\partial  M)\int_{\partial M} |\omega|^2  d\mu_g\\
&\geq&\int_M|\alpha|^2|\omega|^2 d\mu_g.
\end{eqnarray*}
In the last inequality, we used that $\sigma_p(\partial  M)>0$. Hence, we have equality in the above inequalities and, thus, $\omega=0$ on $\partial M$. Now, since $\omega$ is harmonic, this leads to $\omega=0$ on $M$ by \cite{Ann:89}.
\end{proof}

In the following, we will consider a magnetic $1$-form $\alpha$ on $M$ such that its tangential part $\alpha^T=\iota^*\alpha$ is Killing of constant norm on $\partial M$. In this case, the exterior differential $d^{\partial M}$ and codifferential $\delta^{\partial M}$ commute with $\Delta^{\alpha^T}$ as we have seen in Proposition \ref{prop:deltaalphad}. Hence, as in \cite[Thm. 5]{RS:11}, we will estimate the first eigenvalue $\lambda_{1,p}^{\alpha^T}(\partial M)'$ of the magnetic Laplacian $\Delta^{\alpha^T}$ restricted to exact forms in terms of the $p$-curvatures.

\begin{theorem} \label{thm:rsestimate}
Let $(M^n,g)$ be a compact Riemannian manifold with smooth boundary $\partial M$ and let $\alpha$ be a differential $1$-form on $M$ such that $\alpha^T$ is a Killing form on $\partial M$ of constant norm. Assume that $\mathcal{B}^{[p],\alpha}\geq |\alpha|^2$  and that the $p$-curvatures $\sigma_p(\partial M)>0$ for some $1\leq p\leq \frac{n}{2}$. Then the first eigenvalue $\lambda_{1,p}^{\alpha^T}(\partial M)'$ satisfies the inequality
$$\lambda_{1,p}^{\alpha^T}(\partial M)'\geq \sigma_p(\partial M)\sigma_{n-p}(\partial M).$$
\end{theorem}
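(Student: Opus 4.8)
The plan is to test the magnetic Reilly formula of Theorem~\ref{thm:reilly} on a carefully chosen extension of a boundary eigenform, and then to combine the resulting inequality with the eigenvalue equation through a Cauchy--Schwarz/arithmetic--geometric mean argument, as in \cite{RS:11}. First I would fix an exact $p$-eigenform $\phi$ on $\partial M$ realising $\lambda:=\lambda_{1,p}^{\alpha^T}(\partial M)'$. Since $\alpha^T$ is Killing of constant norm, Proposition~\ref{prop:deltaalphad} guarantees that $\Delta^{\alpha^T}$ preserves the space of exact forms, so such a $\phi$ is a genuine eigenform of the full operator, i.e. $\Delta^{\alpha^T}\phi=\lambda\phi$. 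Because $\phi$ is exact it is cohomologically trivial on $\partial M$, and I would extend it to a complex harmonic field $\omega\in\Omega^p(M,\CC)$, that is $d^M\omega=\delta^M\omega=0$ in $M$ with $\iota^*\omega=\phi$ on $\partial M$; the existence of such an extension is the classical boundary value theory for harmonic fields employed in \cite{RS:11}.

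The decisive point is that $\omega$ is an \emph{ordinary} harmonic field rather than a magnetic one. A magnetic harmonic field with $\iota^*\omega=\phi$ cannot in general be produced, since $d^{\alpha^T}\phi=i\,\alpha^T\wedge\phi$ need not vanish; this is the main obstacle, and the way around it is to accept the ordinary harmonic field and let the curvature hypothesis absorb the discrepancy. Concretely, $d^\alpha\omega=i\alpha\wedge\omega$ and $\delta^\alpha\omega=-i\alpha^\sharp\lrcorner\omega$, so by the pointwise identity $|\alpha\wedge\omega|^2+|\alpha^\sharp\lrcorner\omega|^2=|\alpha|^2|\omega|^2$ the left-hand side of the magnetic Reilly formula equals $\int_M|\alpha|^2|\omega|^2\,d\mu_g$. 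The hypothesis $\mathcal{B}^{[p],\alpha}\ge|\alpha|^2$ is exactly what cancels this term against the magnetic Bochner term. Discarding $\int_M|\nabla^\alpha\omega|^2\ge0$, using $\iota^*(*\omega)=\pm *_{\partial M}(\nu\lrcorner\omega)$ together with the lower Weingarten bounds in \eqref{eq:inequweingarten}, I expect to arrive at
$$-2\,{\rm Re}\int_{\partial M}\langle d^{\alpha^T}(\nu\lrcorner\omega),\phi\rangle\,d\mu_g\ \ge\ \sigma_p(\partial M)\int_{\partial M}|\phi|^2\,d\mu_g+\sigma_{n-p}(\partial M)\int_{\partial M}|\nu\lrcorner\omega|^2\,d\mu_g.$$
Here $\sigma_{n-p}(\partial M)>0$ follows from $\sigma_p(\partial M)>0$ together with $p\le\frac{n}{2}$ and the monotonicity $\frac{\sigma_p(x)}{p}\le\frac{\sigma_{n-p}(x)}{n-p}$, which gives $\sigma_{n-p}(x)\ge\frac{n-p}{p}\sigma_p(x)>0$.

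To conclude I would move $d^{\alpha^T}$ onto $\phi$ by adjunction on the closed manifold $\partial M$, rewriting the left-hand side as $-2\,{\rm Re}\int_{\partial M}\langle \nu\lrcorner\omega,\delta^{\alpha^T}\phi\rangle\,d\mu_g$, and bound it by Cauchy--Schwarz by $2\,\Vert\nu\lrcorner\omega\Vert_2\,\Vert\delta^{\alpha^T}\phi\Vert_2$. The eigenvalue equation gives $\Vert\delta^{\alpha^T}\phi\Vert_2^2\le\lambda\Vert\phi\Vert_2^2$, since $\lambda\Vert\phi\Vert_2^2=\langle\Delta^{\alpha^T}\phi,\phi\rangle=\Vert d^{\alpha^T}\phi\Vert_2^2+\Vert\delta^{\alpha^T}\phi\Vert_2^2$ on the closed boundary. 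Writing $P=\Vert\phi\Vert_2^2$ and $B=\Vert\nu\lrcorner\omega\Vert_2^2$, the displayed inequality then becomes $2\sqrt{\lambda BP}\ge\sigma_p P+\sigma_{n-p}B$, and the arithmetic--geometric mean inequality applied to the right-hand side yields $2\sqrt{\lambda BP}\ge 2\sqrt{\sigma_p\sigma_{n-p}\,PB}$, hence $\lambda\ge\sigma_p(\partial M)\,\sigma_{n-p}(\partial M)$ after dividing by $2\sqrt{PB}$. One has $P>0$ since $\phi\neq0$, and $B>0$ is forced: if $\nu\lrcorner\omega$ vanished, the displayed inequality would read $0\ge\sigma_p(\partial M)\,P>0$, a contradiction.

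Besides the existence of the harmonic-field extension, the step I expect to require the most care is the bookkeeping in the boundary term of Theorem~\ref{thm:reilly}, namely checking that precisely the two Weingarten contributions $\sigma_p$ and $\sigma_{n-p}$ survive in the correct degrees after using $\iota^*(*\omega)=\pm*_{\partial M}(\nu\lrcorner\omega)$. The product structure $\sigma_p\sigma_{n-p}$ itself emerges only at the very end through the AM--GM step, so it is essential that both $\sigma_p(\partial M)$ and $\sigma_{n-p}(\partial M)$ are positive, which is exactly where the hypothesis $p\le\frac{n}{2}$ is used.
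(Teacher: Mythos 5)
Your proposal is correct and follows essentially the same route as the paper's proof: take the exact boundary eigenform, extend it to a harmonic field $\hat\omega$ on $M$ (the paper makes the existence step explicit by applying the extension lemma of \cite{BS:08} to the potential $\beta$ with $\omega=d^{\partial M}\beta$ and setting $\hat\omega=d^M\hat\beta$, which is exactly the mechanism your appeal to ``classical boundary value theory'' rests on and why exactness is needed), plug $\hat\omega$ into the magnetic Reilly formula, cancel $\int_M|\alpha|^2|\hat\omega|^2\,d\mu_g$ against the hypothesis $\mathcal{B}^{[p],\alpha}\ge|\alpha|^2$, and bound the boundary terms by the Weingarten estimates \eqref{eq:inequweingarten}. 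The only difference is cosmetic: where you conclude with Cauchy--Schwarz plus AM--GM at the integral level (and must separately rule out $\Vert\nu\lrcorner\omega\Vert_2=0$), the paper completes the square pointwise via $\bigl|\nu\lrcorner\hat\omega+\sigma_{n-p}(\partial M)^{-1}\delta^{\alpha^T}\omega\bigr|^2\ge 0$, which yields $\sigma_{n-p}(\partial M)^{-1}\Vert\delta^{\alpha^T}\omega\Vert_2^2\ge\sigma_p(\partial M)\Vert\omega\Vert_2^2$ directly and avoids that case distinction.
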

\begin{proof} Let $\omega=d^{\partial M}\beta$ be a complex exact $p$-eigenform of $\Delta^{\alpha^T}$ associated to the eigenvalue $\lambda_{1,p}^{\alpha^T}(\partial M)'$. From \cite[Lem. 3.1]{BS:08} (see also \cite[Lem. 3.4.7]{Sc:95}), there exists a complex $(p-1)$-form $\hat\beta$ such that
$\delta^M d^M\hat\beta=0,\, \delta^M\hat\beta=0$ on $M$ and $\iota^*\hat\beta=\beta$ on $\partial M$. The form $\hat\beta$ is unique up to a Dirichlet harmonic form, that is an element in $H^{p-1}_R(M)$. Notice here that $\hat\beta$ cannot be a Dirichlet harmonic form since this would lead to $\omega=0$. Let the $p$-form $\hat\omega:=d^M\hat\beta$ on $M$. Clearly, the form $\hat\omega$ satisfies the following system:
\begin{equation*}
\left\{
\begin{matrix}
	d^M\hat\omega=\delta^M\hat\omega=0& \text{on $M$,}\\
	\iota^*\hat\omega=\omega, & \text{on $\partial M.$}
\end{matrix}\right.
\end{equation*}
Applying the magnetic Reilly formula in Theorem \ref{thm:reilly} to the form $\hat\omega$ gives (after using that $|d^\alpha\hat\omega|^2+|\delta^\alpha\hat\omega|^2=|\alpha|^2|\hat\omega|^2$, the condition on the magnetic Bochner operator $\mathcal{B}^{[p],\alpha}$ and the fact that $|\nabla^\alpha\hat\omega|^2\geq 0$) the following inequality
\begin{equation}\label{eq:reillyestimate}
0\geq 2   {\rm{Re}}\left(  \int_{\partial M} \langle \nu \lrcorner \hat\omega,\delta^{\alpha^T} \omega \rangle d\mu_g\right) +\sigma_{p}(\partial M)\int_{\partial M}|\omega|^2d\mu_g+\sigma_{n-p}(\partial M)\int_{\partial M}|\nu\lrcorner\hat\omega|^2d\mu_g.
\end{equation}
We also use  the first estimate in \eqref{eq:inequweingarten} applied to the $p$-form $\iota^*\hat\omega=\omega$ and to the $(n-p)$-form  $\iota^*(*\hat\omega)=*_{\partial M}(\nu\lrcorner\hat\omega)$. As $p\leq \frac{n}{2}$, we have that $\frac{\sigma_p(\partial M)}{p}\leq \frac{\sigma_{n-p}(\partial M)}{n-p}$ and thus $\sigma_{n-p}(\partial M)>0$. Then, by using the pointwise inequality $|\nu\lrcorner\hat\omega+\frac{1}{\sigma_{n-p}(\partial M)}\delta^{\alpha^T}\omega|^2\geq 0$, we get the following estimate
$$\frac{2}{\sigma_{n-p}(\partial M)}{\rm{Re}}\left( \langle \nu \lrcorner \hat\omega, \delta^{\alpha^T}\omega \rangle \right)+|\nu\lrcorner\hat\omega|^2\geq -\frac{1}{\sigma_{n-p}(\partial M)^2}|\delta^{\alpha^T}\omega|^2.$$
Therefore by integrating this last inequality and multiplying it by $\sigma_{n-p}(\partial M)$, Inequality \eqref{eq:reillyestimate} reduces to
$$\frac{1}{\sigma_{n-p}(\partial M)}\int_{\partial M} |\delta^{\alpha^T}\omega|^2 d\mu_g\geq \sigma_{p}(\partial M)\int_{\partial M}|\omega|^2d\mu_g.$$
Finally, by using the fact that $\omega$ is a closed eigenform for the magnetic Laplacian $\Delta^{\alpha^T}$, we have
\begin{eqnarray*}
\lambda_{1,p}^{\alpha^T}(\partial M)'\int_{\partial M}|\omega|^2 d\mu_g &=&\int_{\partial M}(|d^{\alpha^T}\omega|^2+|\delta^{\alpha^T}\omega|^2)d\mu_g\\
&=&\int_{\partial M}(|\alpha^T\wedge\omega|^2+|\delta^{\alpha^T}\omega|^2)d\mu_g\\
&\geq& \int_{\partial M}|\delta^{\alpha^T}\omega|^2d\mu_g\\&\geq& \sigma_{p}(\partial M)\sigma_{n-p}(\partial M)\int_{\partial M}|\omega|^2d\mu_g.
\end{eqnarray*}
which is the desired estimate. This finishes the proof of the theorem.
\end{proof}

\subsection{A gap estimate between first eigenvalues}

In the next result, we adapt the computations in \cite[Thm. 2.3]{GS} to find a gap estimate between the eigenvalues of different degrees $\lambda_{1,p}^\alpha(M)$ and $\lambda_{1,p-1}^\alpha(M)$. For this, we will assume the manifold $(M^n,g)$ is isometrically immersed into Euclidean space $\mathbb{R}^{n+m}$  and consider the magnetic Laplacian with Dirichlet boundary conditions, in contrast to \cite{GS} where absolute boundary conditions are taken. Recall that for a given normal vector field $Z$ to $M$, the Weingarten tensor ${\rm II}_Z$ is the endomorphism of $TM$ given by
 $$\langle {\rm II}_Z(X),Y\rangle =\langle Z, {\rm II}(X,Y)\rangle$$
where $X,Y$ are tangent to $M$ and ${\rm II}$ is the second fundamental form of the immersion. As in Equation \eqref{eq:extension}, we will use the extension ${\rm II}_Z^{[p]}$ of the Weingarten tensor to $p$-differential forms.

 \begin{theorem}\label{gapestimatethm}
 Let $(M^n,g)$ be a compact manifold with smooth boundary that is isometrically immersed into the Euclidean space $\mathbb{R}^{n+m}$. Let $\alpha$ be a smooth $1$-form on $M$. Then, for all $1\leq p\leq n$, the eigenvalues of the magnetic Dirichlet Laplacian on $M$ satisfy
 $$ \lambda^\alpha_{1,p}(M) \geq \lambda^\alpha_{1,p-1}(M)+ \frac{1}{p} \sup_{x \in M}\lambda_{\rm{min}} \left( \mathcal{B}^{[p],\alpha}(x) -\sum_{t=1}^{m}({\rm II}_{f_t}^{[p]})^2(x)\right),$$
where $\lambda_{\rm{min}}(A)$ is the smallest eigenvalue of a symmetric operator $A$ and $\{f_1,\ldots,f_m\}$ is a local orthonormal basis of $TM^\perp$.
 \end{theorem}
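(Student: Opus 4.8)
The plan is to adapt the Guerini--Savo test--form argument to the magnetic setting, using contractions of a first $p$-eigenform with the tangential projections of the constant Euclidean frame as trial $(p-1)$-forms. First I would exploit the Dirichlet condition: an eigenform $\omega$ for $\lambda^\alpha_{1,p}(M)$ satisfies $\omega|_{\partial M}=0$, so $\iota^*\omega=0$, $\nu\lrcorner\omega=0$ and $\iota^*(*\omega)=0$ on $\partial M$. Hence every boundary integral in the magnetic Reilly formula (Theorem~\ref{thm:reilly}) vanishes and we are left with the integrated magnetic Bochner identity
\[
\lambda^\alpha_{1,p}(M)\,\|\omega\|_2^2=\int_M|\nabla^\alpha\omega|^2\,d\mu_g+\int_M\langle\mathcal{B}^{[p],\alpha}\omega,\omega\rangle\,d\mu_g .
\]

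Next I would record the immersion data. Writing $E_1,\dots,E_{n+m}$ for the constant orthonormal frame of $\RR^{n+m}$ and $E_a^T$ for the tangential part of $E_a$ along $M$, one has the pointwise resolution of the identity $\sum_a E_a^T\otimes (E_a^T)^\flat=\mathrm{Id}_{TM}$, which gives $\sum_a|E_a^T\lrcorner\beta|^2=p\,|\beta|^2$ for every $p$-form $\beta$, together with the Gauss-type relation $\nabla^M_X E_a^T=\mathrm{II}_{E_a^\perp}(X)$ and the fact that $\sum_a \mathrm{II}_{E_a^\perp}\otimes\mathrm{II}_{E_a^\perp}$ collapses to $\sum_{t=1}^m \mathrm{II}_{f_t}\otimes\mathrm{II}_{f_t}$. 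The trial forms $\psi_a:=E_a^T\lrcorner\omega\in\Omega^{p-1}(M,\CC)$ again vanish on $\partial M$, hence are admissible for the Dirichlet variational characterisation \eqref{eq:minmax} in degree $p-1$.

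Applying \eqref{eq:minmax} to each $\psi_a$ and summing over $a$, using $\sum_a\|\psi_a\|_2^2=p\|\omega\|_2^2$, yields
\[
p\,\lambda^\alpha_{1,p-1}(M)\,\|\omega\|_2^2\le\sum_{a}\int_M\big(|d^\alpha\psi_a|^2+|\delta^\alpha\psi_a|^2\big)\,d\mu_g .
\]
The core of the argument is to evaluate the right-hand side. Expanding $d^\alpha\psi_a$ and $\delta^\alpha\psi_a$ through the Leibniz rule $\nabla^\alpha_X(E_a^T\lrcorner\omega)=(\nabla^M_X E_a^T)\lrcorner\omega+E_a^T\lrcorner\nabla^\alpha_X\omega$ splits each term into a ``$\nabla^\alpha\omega$'' piece, a ``$\mathrm{II}$'' piece, and mixed pieces. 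Summation over $a$ turns the first into $\int_M|\nabla^\alpha\omega|^2$-type contributions (via the resolution of the identity) and the second into the quadratic form of $\sum_t(\mathrm{II}^{[p]}_{f_t})^2$, while the mixed terms are integrated by parts; the flatness of $\RR^{n+m}$ (equivalently the Codazzi equation and the total symmetry of $\nabla\mathrm{II}$), together with the Gauss equation rewriting the intrinsic curvature inside $\mathcal{B}^{[p]}$ in terms of $\mathrm{II}$, is what makes the extrinsic combination $\mathcal{B}^{[p],\alpha}-\sum_t(\mathrm{II}^{[p]}_{f_t})^2$ emerge. Substituting the integrated Bochner identity for $\int_M|\nabla^\alpha\omega|^2$, I expect to reach
\[
p\big(\lambda^\alpha_{1,p}(M)-\lambda^\alpha_{1,p-1}(M)\big)\|\omega\|_2^2\ge\int_M\Big\langle\Big(\mathcal{B}^{[p],\alpha}-\sum_{t=1}^m(\mathrm{II}^{[p]}_{f_t})^2\Big)\omega,\omega\Big\rangle\,d\mu_g .
\]

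It then remains to bound this quadratic form from below by the bottom of the spectrum of the symmetric curvature endomorphism $\mathcal{B}^{[p],\alpha}(x)-\sum_t(\mathrm{II}^{[p]}_{f_t})^2(x)$ and to divide by $p\|\omega\|_2^2$, which produces the advertised estimate with the factor $1/p$ and the extremal curvature constant $\lambda_{\min}$. The main obstacle is exactly the middle computation: keeping track of the mixed terms in $\sum_a\big(|d^\alpha\psi_a|^2+|\delta^\alpha\psi_a|^2\big)$ after integration by parts, and verifying that the twistor-type cancellations (in the spirit of Lemma~\ref{lem:gallotmeyer}) combined with the Gauss and Codazzi equations assemble \emph{precisely} into the endomorphism $\mathcal{B}^{[p],\alpha}-\sum_t(\mathrm{II}^{[p]}_{f_t})^2$ with the sharp constant. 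The magnetic terms $i\alpha\wedge$ and $-i\alpha^\sharp\lrcorner$ must be carried through all of these manipulations; since they commute with contraction by the real fields $E_a^T$ exactly as in the non-magnetic case, I expect them to enter only via $\mathcal{B}^{[p],\alpha}$ and not to disturb the structure of the identity.
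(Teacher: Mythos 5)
Your overall strategy is exactly the paper's: contract a Dirichlet $p$-eigenform $\omega$ with the tangential parts $(\partial_{x_j})^T$ of the constant Euclidean frame, use these $(p-1)$-forms (which inherit the Dirichlet condition) as test forms in \eqref{eq:minmax}, exploit $\sum_j|(\partial_{x_j})^T\lrcorner\omega|^2=p|\omega|^2$, and close the argument by substituting the integrated magnetic Bochner identity obtained from Theorem \ref{thm:reilly} (equivalently \eqref{eq:nablastar}) with vanishing boundary terms. Your anticipated intermediate inequality
\[
p\bigl(\lambda^\alpha_{1,p}(M)-\lambda^\alpha_{1,p-1}(M)\bigr)\,\|\omega\|_2^2\ \ge\ \int_M\Bigl\langle\Bigl(\mathcal{B}^{[p],\alpha}-\sum_{t=1}^m(\mathrm{II}^{[p]}_{f_t})^2\Bigr)\omega,\omega\Bigr\rangle\,d\mu_g
\]
is precisely the one the paper reaches. (One caveat shared with the paper: the final pointwise bound by the bottom eigenvalue of the curvature endomorphism delivers $\inf_{x\in M}\lambda_{\rm min}$, not a supremum, so neither argument produces more than the infimum version of the stated constant.)

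The genuine gap is the step you yourself flag as ``the main obstacle'': the evaluation of $\sum_j\bigl(|d^\alpha\psi_j|^2+|\delta^\alpha\psi_j|^2\bigr)$ is the actual content of the proof, and the mechanism you propose for it (integration by parts on the mixed terms, the Codazzi equation, and the Gauss equation rewriting $\mathcal{B}^{[p]}$ extrinsically in terms of $\mathrm{II}$) is not what makes it work, and would not assemble the stated operator. In the paper the computation is entirely pointwise and uses no Gauss or Codazzi \emph{curvature} equations; the only immersion input is the shape-operator relation $\nabla^M(\partial_{x_j})^T=\mathrm{Hess}^M(x_j\circ\iota)$ that you already listed, and $\mathcal{B}^{[p],\alpha}$ enters \emph{only} through the Bochner substitution, never through an extrinsic rewriting. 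Concretely: (i) on the codifferential side there is no $\mathrm{II}$-contribution at all, because the Hessian is symmetric and $\sum_i e_i\lrcorner\bigl(A(e_i)\lrcorner\,\cdot\,\bigr)=0$ for symmetric $A$, giving the exact identity $\delta^\alpha\bigl((\partial_{x_j})^T\lrcorner\omega\bigr)=-(\partial_{x_j})^T\lrcorner\delta^\alpha\omega$ and hence $(p-1)|\delta^\alpha\omega|^2$ after summation; (ii) on the differential side one has $d^\alpha\bigl((\partial_{x_j})^T\lrcorner\omega\bigr)=\nabla^\alpha_{(\partial_{x_j})^T}\omega+\mathrm{II}^{[p]}_{(\partial_{x_j})^\perp}\omega-(\partial_{x_j})^T\lrcorner d^\alpha\omega$, and upon summing $|\cdot|^2$ over $j$ every mixed tangential/normal term vanishes pointwise because $\sum_j\langle\partial_{x_j},e_s\rangle\langle\partial_{x_j},f_t\rangle=\langle e_s,f_t\rangle=0$ (no integration by parts), while the remaining cross term equals $-2|d^\alpha\omega|^2$ pointwise via $d^\alpha=\sum_s e_s^*\wedge\nabla^\alpha_{e_s}$, turning $(p+1)|d^\alpha\omega|^2$ into $(p-1)|d^\alpha\omega|^2$; (iii) the resulting $(p-1)\bigl(|d^\alpha\omega|^2+|\delta^\alpha\omega|^2\bigr)$ must then be converted into $(p-1)\lambda^\alpha_{1,p}(M)\|\omega\|_2^2$ by Green's formula with the Dirichlet condition --- a step absent from your outline but necessary to produce the full factor $p\,\lambda^\alpha_{1,p}(M)$ on the right-hand side. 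So your skeleton coincides with the paper's, but the core cancellation works by frame orthogonality and Hessian symmetry, not by the curvature identities you invoke.
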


\begin{proof} The proof follows along the lines of \cite{GS}. For each $j=1,\ldots,n+m$, the unit parallel vector field $\partial_{x_j}$ on $\mathbb{R}^{n+m}$ splits as $\partial_{x_j}=(\partial_{x_j})^T+(\partial_{x_j})^\perp$ with $(\partial_{x_j})^T=d^M(x_j\circ\iota)$ where $\iota$ is the isometric immersion. For any $p$-eigenform $\omega$ of $\Delta^{\alpha}$ associated to $\lambda_{1,p}^\alpha(M)$ with Dirichlet boundary condition, the $(p-1)$-form $(\partial_{x_j})^T\lrcorner\omega$  clearly satisfies the Dirichlet boundary condition. Hence, by the  characterization \eqref{eq:minmax} of the first eigenvalue applied to $(\partial_{x_j})^T\lrcorner\omega$, we have for each $j$,
\begin{equation}\label{eq:inequdiffdegre}
\lambda_{1,p-1}^\alpha(M)\int_M|(\partial_{x_j})^T\lrcorner\omega|^2d\mu_g\leq \int_M (|d^\alpha((\partial_{x_j})^T\lrcorner\omega)|^2+|\delta^\alpha((\partial_{x_j})^T\lrcorner\omega)|^2) d\mu_g.
\end{equation}
In the following, we will take the sum over $j$ and compute each term separately. For this, we let $\{e_1,\ldots,e_n\}$ denote a local orthonormal frame of $TM$. Recall that any complex $p$-form $\beta$ on $M$ can be written as $\beta=\frac{1}{p}\sum_{s=1}^n e_s^*\wedge (e_s\lrcorner\beta)$, and therefore, $\sum_{s=1}^n\langle e_s\lrcorner\beta,e_s\lrcorner\gamma\rangle=p\langle\beta,\gamma\rangle$ for any complex $p$-forms $\beta,\gamma$. Now, the sum over $j$ of the l.h.s. of \eqref{eq:inequdiffdegre} is equal to

\begin{eqnarray}\label{eq:normomega}
\sum_{j=1}^{n+m}|(\partial_{x_j})^T\lrcorner\omega|^2&=&\sum_{j=1}^{n+m}\sum_{s,t=1}^ng((\partial_{x_j})^T,e_s)g((\partial_{x_j})^T,e_t)\langle e_s\lrcorner\omega,e_t\lrcorner\omega\rangle\nonumber\\
&=&\sum_{s,t=1}^n\underbrace{\sum_{j=1}^{n+m}g(\partial_{x_j},e_s)g(\partial_{x_j},e_t)}_{\delta_{st}}\langle e_s\lrcorner\omega,e_l\lrcorner\omega\rangle\nonumber\\
&=&\sum_{s=1}^{n}|e_s\lrcorner\omega|^2=p|\omega|^2.
\end{eqnarray}
Now, using that $(\partial_{x_j})^T=d^M(x_j\circ\iota)$, we have that $\nabla^M(\partial_{x_j})^T={\rm Hess}^M(x_j\circ\iota)$, which is then a symmetic endomorphism on $TM$. Hence, it follows that  $$\delta^M((\partial_{x_j})^T\lrcorner\omega)=-\sum_{i=1}^n e_i\lrcorner \left(\nabla^M_{e_i}(\partial_{x_j})^T\lrcorner\omega\right)-(\partial_{x_j})^T\lrcorner\delta^M\omega=-(\partial_{x_j})^T\lrcorner\delta^M\omega.$$
In the last equality, we use the fact that $\sum_{i=1}^n e_i\lrcorner (A(e_i)\lrcorner)=0$ for any symmetric endomorphism $A$ of $TM$.  Therefore, we compute
\begin{eqnarray*}
\delta^\alpha((\partial_{x_j})^T\lrcorner\omega)&=&\delta^M((\partial_{x_j})^T\lrcorner\omega)-i\alpha\lrcorner( (\partial_{x_j})^T\lrcorner\omega)\\
&=&-(\partial_{x_j})^T\lrcorner\delta^M\omega+i(\partial_{x_j})^T\lrcorner(\alpha\lrcorner\omega)\\
&=&-(\partial_{x_j})^T\lrcorner\delta^\alpha\omega.
\end{eqnarray*}
Hence, we deduce that
\begin{equation}\label{eq:deltalpha}
\sum_{j=1}^{n+m}|\delta^\alpha((\partial_{x_j})^T\lrcorner\omega)|^2=\sum_{j=1}^{n+m}|(\partial_{x_j})^T\lrcorner\delta^\alpha\omega|^2=(p-1)|\delta^\alpha\omega|^2.
\end{equation}
In the last equality, we apply \eqref{eq:normomega} for $\delta^\alpha \omega$ instead of $\omega$. Now using Cartan's formula  
and the identity $\mathcal{L}_{X^T}\omega=\nabla^M_{X^T}\omega+{\rm II}_{X^\perp}^{[p]}\omega$ for any parallel vector field $X\in \mathbb{R}^{n+m}$ proven in \cite[formula (4.3)]{GS}, where ${\rm II}_{X^\perp}^{[p]}$ is defined in \eqref{eq:extension}, we write
\begin{eqnarray} \label{eq:dalpha}
d^\alpha((\partial_{x_j})^T\lrcorner\omega))&=&d^M((\partial_{x_j})^T\lrcorner\omega)+i\alpha\wedge \left((\partial_{x_j})^T\lrcorner\omega\right)\nonumber\\
&=&\mathcal{L}_{(\partial_{x_j})^T}\omega-(\partial_{x_j})^T\lrcorner d^M\omega+i\alpha\wedge \left((\partial_{x_j})^T\lrcorner\omega\right)\nonumber\\
&=&\nabla^M_{(\partial_{x_j})^T}\omega+{\rm II}_{(\partial_{x_j})^\perp}^{[p]}\omega-(\partial_{x_j})^T\lrcorner d^M\omega+i\alpha\wedge \left((\partial_{x_j})^T\lrcorner\omega\right)\nonumber\\
&=&\nabla^M_{(\partial_{x_j})^T}\omega+{\rm II}_{(\partial_{x_j})^\perp}^{[p]}\omega-(\partial_{x_j})^T\lrcorner d^\alpha\omega+i(\partial_{x_j})^T\lrcorner(\alpha\wedge\omega)\nonumber\\&&+i\alpha\wedge \left((\partial_{x_j})^T\lrcorner\omega\right)\nonumber\\
&=&\nabla^\alpha_{(\partial_{x_j})^T}\omega+{\rm II}_{(\partial_{x_j})^\perp}^{[p]}\omega-(\partial_{x_j})^T\lrcorner d^\alpha\omega.
\end{eqnarray}
In the last equality, we use the relation  $X\lrcorner(\alpha\wedge\omega)=\alpha(X)\omega-\alpha\wedge (X\lrcorner\omega)$ for any vector field $X$ and the definition of the magnetic covariant derivative $\nabla_X^\alpha=\nabla^M_X+i\alpha(X)$.  Now, we want to take the norm in  \eqref{eq:dalpha} and sum over $j$. We have
\begin{eqnarray*}
\sum_{j=1}^{n+m}|\nabla^\alpha_{(\partial_{x_j})^T}\omega|^2&=&\sum_{j=1}^{n+m}\sum_{s,t=1}^ng((\partial_{x_j})^T,e_s)g((\partial_{x_j})^T,e_t)\langle\nabla^\alpha_{e_s}\omega,\nabla^\alpha_{e_t}\omega\rangle\\
&=&\sum_{s,t=1}^n\underbrace{\sum_{j=1}^{n+m}g(\partial_{x_j},e_s)g(\partial_{x_j},e_t)}_{\delta_{st}}\langle\nabla^\alpha_{e_s}\omega,\nabla^\alpha_{e_t}\omega\rangle\\
&=&\sum_{s=1}^n|\nabla^\alpha_{e_s}\omega|^2=|\nabla^\alpha\omega|^2.
\end{eqnarray*}
We can do the same procedure for the cross terms in \eqref{eq:dalpha}, for example,
if we denote by $\{f_1,\ldots, f_m\}$ a local orthonormal frame of $TM^\perp$, we compute
\begin{eqnarray*}
\sum_{j=1}^{n+m}\langle \nabla^\alpha_{(\partial_{x_j})^T}\omega, {\rm II}_{(\partial_{x_j})^\perp}^{[p]}\omega  \rangle&=&\sum_{j=1}^{n+m}\sum_{s=1}^n \sum_{t=1}^m \langle (\partial_{x_j})^T,e_s \rangle \langle (\partial_{x_j})^\perp,f_t \rangle\langle\nabla^\alpha_{e_s}\omega, {\rm II}_{f_t}^{[p]}\omega \rangle
\\
&=&\sum_{s=1}^n\sum_{t=1}^m\underbrace{\sum_{j=1}^{n+m}g(\partial_{x_j},e_s)\langle \partial_{x_j},f_t\rangle}_{\langle e_s,f_t \rangle = 0}\langle\nabla^\alpha_{e_s}\omega,{\rm II}_{f_t}^{[p]}\omega
\rangle = 0.
\end{eqnarray*}
Therefore, all the terms involving $(\partial_{x_j})^T$ and $(\partial_{x_j})^\perp$ at the same time will vanish, and we get
\begin{eqnarray}\label{eq:dalphaomega}
\sum_{j=1}^{n+m}|d^\alpha((\partial_{x_j})^T\lrcorner\omega))|^2&=&|\nabla^\alpha\omega|^2+\sum_{t=1}^{m}|{\rm II}_{f_t}^{[p]}\omega|^2+(p+1)|d^\alpha\omega|^2\nonumber\\&&-2\sum_{s=1}^{n}{\rm{Re}}\left(\langle \nabla^\alpha_{e_s}\omega,e_s\lrcorner d^\alpha\omega\rangle\right)\nonumber\\
&=&|\nabla^\alpha\omega|^2+\sum_{t=1}^{m}|{\rm II}_{f_t}^{[p]}\omega|^2+(p-1)|d^\alpha\omega|^2.\nonumber\\
\end{eqnarray}
Replacing \eqref{eq:normomega}, \eqref{eq:deltalpha} and \eqref{eq:dalphaomega} into Inequality \eqref{eq:inequdiffdegre}, we obtain $$\lambda^\alpha_{1,p-1}(M) p\int_M|\omega|^2 d\mu_g\leq \int_M\left(|\nabla^\alpha\omega|^2+\sum_{t=1}^{m}|{\rm II}_{f_t}^{[p]}\omega|^2+(p-1)(|d^\alpha\omega|^2+|\delta^\alpha\omega|^2)\right)d\mu_g.$$
Now using Equality \eqref{eq:nablastar} for the eigenform $\omega$ with Dirichlet boundary conditions yields that $$\int_M|\nabla^\alpha\omega|^2d\mu_g=\lambda^\alpha_{1,p}(M) \int_M|\omega|^2 d\mu_g-\int_M\langle\mathcal{B}^{[p],\alpha}\omega,\omega\rangle d\mu_g.$$
Hence, we deduce that
\begin{eqnarray*}
\lambda^\alpha_{1,p-1}(M) p\int_M|\omega|^2 d\mu_g&\leq& p\lambda^\alpha_{1,p}(M)\int_M|\omega|^2 d\mu_g -\int_M\langle\mathcal{B}^{[p],\alpha}\omega,\omega\rangle d\mu_g\\ &&+\sum_{t=1}^{m}\int_M\langle({\rm II}_{f_t}^{[p]})^2\omega,\omega\rangle d\mu_g,
\end{eqnarray*}
which ends the proof.
\end{proof}

\begin{corollary}
Let $(M^n,g)$ be a domain in Euclidean space $\mathbb{R}^n$ and let $\alpha$ be a $1$-form on $M$. Then, for all $p\geq 1$, the eigenvalues of the magnetic Dirichlet Laplacian satisfy
$$\lambda^{\alpha}_{1,p}(M)\geq \lambda^\alpha_{1,p-1}(M)-||d^M\alpha||_\infty.$$
In particular, the following estimate
$$\lambda_{1,p}^\alpha(M)\geq \lambda_0(M)-p||d^M\alpha||_\infty$$
holds, where $\lambda_0(M)$ is the first eigenvalue of the scalar Laplacian with Dirichlet boundary condition.
\end{corollary}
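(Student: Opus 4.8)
The plan is to apply Theorem~\ref{gapestimatethm} to the inclusion $M \hookrightarrow \RR^{n}$, which is an isometric immersion of codimension $m = 0$. First I would record two simplifications coming from flatness: the normal bundle is trivial, so the term $\sum_{t=1}^m ({\rm II}_{f_t}^{[p]})^2$ is an empty sum, and the curvature operator $R^M$ vanishes, so the classical Bochner operator satisfies $\mathcal{B}^{[p]} = 0$. By Lemma~\ref{lem:magneticbochner}, the magnetic Bochner operator then reduces to $\mathcal{B}^{[p],\alpha} = -iA^{[p],\alpha}$, a symmetric endomorphism of $\Omega^p(M,\CC)$, and Theorem~\ref{gapestimatethm} becomes
$$ \lambda_{1,p}^\alpha(M) \ge \lambda_{1,p-1}^\alpha(M) + \frac{1}{p}\sup_{x\in M}\lambda_{\min}\bigl(-iA^{[p],\alpha}(x)\bigr). $$

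Next I would bound this curvature term from below pointwise. Since $A^\alpha(x)$ is skew-symmetric, $iA^\alpha(x)$ is a symmetric endomorphism of $T_xM$ with $(iA^\alpha)^{[p]} = iA^{[p],\alpha}$; applying the second inequality of \eqref{eq:upperbounda} to $iA^\alpha(x)$ gives $\langle iA^{[p],\alpha}\omega,\omega\rangle \le p\Vert A^\alpha(x)\Vert\,|\omega|^2$, hence $\lambda_{\min}(-iA^{[p],\alpha}(x)) \ge -p\Vert A^\alpha(x)\Vert$. It then remains to compare $\Vert A^\alpha(x)\Vert$ with $|d^M\alpha(x)|$: bringing the $2$-form $d^M\alpha(x)$ into canonical orthonormal form $\sum_j \mu_j\, e_{2j-1}^* \wedge e_{2j}^*$, the endomorphism $A^\alpha(X) = (X\lrcorner d^M\alpha)^\sharp$ acts on each plane $\mathrm{span}(e_{2j-1},e_{2j})$ as $\mu_j$ times a rotation, so $\Vert A^\alpha(x)\Vert = \max_j|\mu_j| \le \bigl(\sum_j\mu_j^2\bigr)^{1/2} = |d^M\alpha(x)|$. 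Combining these estimates, $\lambda_{\min}(-iA^{[p],\alpha}(x)) \ge -p\Vert d^M\alpha\Vert_\infty$ for every $x \in M$, whence $\frac{1}{p}\sup_x \lambda_{\min}(\cdots) \ge -\Vert d^M\alpha\Vert_\infty$, which yields the first claimed inequality.

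Finally, I would iterate the first inequality over the degrees $p, p-1, \dots, 1$, which telescopes to $\lambda_{1,p}^\alpha(M) \ge \lambda_{1,0}^\alpha(M) - p\Vert d^M\alpha\Vert_\infty$. Since the diamagnetic inequality holds for the magnetic Laplacian on functions with Dirichlet boundary condition (see Section~\ref{sec:maglapfunc}), we have $\lambda_{1,0}^\alpha(M) \ge \lambda_0(M)$, and substituting this in gives the second estimate. I expect the argument to be essentially routine; the only two points needing care are the operator-norm comparison $\Vert A^\alpha\Vert \le |d^M\alpha|$ via the canonical form of a $2$-form, and anchoring the telescoping chain at $p=0$ through the scalar diamagnetic inequality rather than through the gap estimate, which is only available for $p \ge 1$.
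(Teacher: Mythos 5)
Your proposal is correct and follows essentially the same route as the paper's proof: specialize Theorem~\ref{gapestimatethm} to the flat, codimension-zero case so that only $-iA^{[p],\alpha}$ survives, bound it below via the second inequality in \eqref{eq:upperbounda} together with $\Vert A^\alpha\Vert \le \Vert d^M\alpha\Vert_\infty$, and then iterate over successive degrees. The only difference is that you spell out two details the paper leaves implicit --- the canonical-form argument showing $\Vert A^\alpha(x)\Vert \le |d^M\alpha(x)|$, and the use of the scalar Dirichlet diamagnetic inequality to pass from $\lambda_{1,0}^\alpha(M)$ to $\lambda_0(M)$ --- both of which are exactly what the paper's terse wording relies on.
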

\begin{proof}
Since $M$ is a domain in Euclidean space, the second fundamental form and the curvature operator of $M$ vanish. Therefore, Theorem \ref{gapestimatethm} allows us to deduce that
$$\lambda^{\alpha}_{1,p}\geq \lambda^\alpha_{1,p-1} +
\frac{1}{p} \sup_{x \in M} \lambda_{\rm{min}} \left(- i A^{[p],\alpha}\right) .$$
Recall here that $-iA^{[p],\alpha}$ is a symmetric tensor field where $A^\alpha(X)=X\lrcorner d^M\alpha$ for all $X\in TM$. Now, by the second inequality in \eqref{eq:upperbounda}, we have $iA^{[p],\alpha}\leq p||A^\alpha||\leq p||d^M\alpha||_\infty$. This finishes the first part. The second part is easily proved by taking
successive $p$'s.
\end{proof}

\begin{corollary}
Let $(M^n,g)$ be a domain in the round unit sphere $\mathbb{S}^n$ and let $\alpha$ be a $1$-form on $M$. Then, for all $p\geq 1$, the eigenvalues of the magnetic Dirichlet Laplacian satisfy
$$\lambda^{\alpha}_{1,p}(M)\geq \lambda^\alpha_{1,p-1}(M)+n-2p-||d^M\alpha||_\infty.$$
In particular, the following estimate
$$\lambda_{1,p}^\alpha(M)\geq \lambda_0(M)+p(n-p-1-||d^M\alpha||_\infty)$$
holds, where $\lambda_0(M)$ is the first eigenvalue of the scalar Laplacian with Dirichlet boundary condition.
\end{corollary}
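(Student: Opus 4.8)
The plan is to specialize the gap estimate of Theorem \ref{gapestimatethm} to the totally umbilic, constant-curvature setting of the round sphere, exactly as the previous corollary did for Euclidean domains. Since $M$ is a domain in $\mathbb{S}^n \subset \mathbb{R}^{n+1}$, the codimension is $m=1$ and I may take a single unit normal $f_1 = \nu$. Two geometric inputs are needed. First, $\mathbb{S}^n$ (and hence any domain in it) is totally umbilic with Weingarten tensor $\mathrm{II}_{f_1} = \pm \mathrm{Id}$ on $TM$; by the extension formula \eqref{eq:extension} this gives $\mathrm{II}_{f_1}^{[p]} = \pm p\,\mathrm{Id}$, so that $(\mathrm{II}_{f_1}^{[p]})^2 = p^2\,\mathrm{Id}$ on $p$-forms. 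Second, the curvature-$1$ condition forces the usual Bochner operator to be $\mathcal{B}^{[p]} = p(n-p)\,\mathrm{Id}$, whence by Lemma \ref{lem:magneticbochner},
$$\mathcal{B}^{[p],\alpha} = p(n-p)\,\mathrm{Id} - iA^{[p],\alpha}.$$

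Combining these, I compute the symmetric endomorphism appearing in Theorem \ref{gapestimatethm}:
$$\mathcal{B}^{[p],\alpha} - \sum_{t=1}^m (\mathrm{II}_{f_t}^{[p]})^2 = \bigl(p(n-p) - p^2\bigr)\mathrm{Id} - iA^{[p],\alpha} = p(n-2p)\,\mathrm{Id} - iA^{[p],\alpha}.$$
Since the scalar part is constant on $M$, its smallest eigenvalue at each point equals $p(n-2p) - \lambda_{\max}(iA^{[p],\alpha})$. Using the second inequality in \eqref{eq:upperbounda} together with $\|A^\alpha\| \leq \|d^M\alpha\|_\infty$ (as in the previous corollary), I bound $iA^{[p],\alpha} \leq p\|d^M\alpha\|_\infty$ pointwise, so that the smallest eigenvalue is at least $p(n-2p) - p\|d^M\alpha\|_\infty$ at every point; consequently the supremum over $x \in M$ is also at least this quantity. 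Feeding this into Theorem \ref{gapestimatethm} and dividing the correction term by $p$ yields the first stated inequality
$$\lambda^\alpha_{1,p}(M) \geq \lambda^\alpha_{1,p-1}(M) + (n-2p) - \|d^M\alpha\|_\infty.$$

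For the second inequality I would iterate the first one from $p$ down to $0$, obtaining
$$\lambda^\alpha_{1,p}(M) \geq \lambda^\alpha_{1,0}(M) + \sum_{k=1}^p \bigl((n-2k) - \|d^M\alpha\|_\infty\bigr).$$
The telescoping sum evaluates via $\sum_{k=1}^p (n-2k) = p(n-p-1)$ and $\sum_{k=1}^p \|d^M\alpha\|_\infty = p\|d^M\alpha\|_\infty$ to give $\lambda^\alpha_{1,0}(M) + p(n-p-1-\|d^M\alpha\|_\infty)$. Finally, the diamagnetic inequality for functions (valid with Dirichlet boundary conditions) gives $\lambda^\alpha_{1,0}(M) \geq \lambda_0(M)$, completing the estimate. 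I expect no serious obstacle here: the only points requiring care are the correct umbilicity constant for the sphere (which produces the $p^2$ term and hence the passage from $p(n-p)$ to $p(n-2p)$) and the direction of the $\sup_x \lambda_{\min}$ bound, both of which are controlled by the constancy of the curvature contribution.
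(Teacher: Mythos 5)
Your proposal is correct and follows essentially the same route as the paper: specialize Theorem \ref{gapestimatethm} to the codimension-one totally umbilic immersion $\mathbb{S}^n\hookrightarrow\mathbb{R}^{n+1}$, use $\mathcal{B}^{[p]}=p(n-p)$ and $(\mathrm{II}_{f_1}^{[p]})^2=p^2$ together with the bound $iA^{[p],\alpha}\leq p\Vert d^M\alpha\Vert_\infty$ from \eqref{eq:upperbounda}, and then telescope over $p$ with the diamagnetic inequality for functions to reach $\lambda_0(M)$. Your write-up simply makes explicit the details (the $\lambda_{\min}$ bookkeeping, the telescoping sum, and the Dirichlet diamagnetic step) that the paper leaves implicit.
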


\begin{proof}
We use the isometric immersion of $\mathbb{S}^n\hookrightarrow \mathbb{R}^{n+1}$ for which the second fundamental form is the identity. The proof is then a direct consequence of Theorem \ref{gapestimatethm} using the fact that, on the round sphere, $\mathcal{B}^{[p]}=p(n-p)$ and that
$\sum_{a=1}^{m}({\rm II}^{[p]}_{f_a})^2=p^2$.
\end{proof}

\appendix

\section{Spectral computations for magnetic Laplacians for functions on Berger spheres}\label{sec:berger}

\subsection{Eigenvalue decomposition of the ordinary Laplacian on the standard $3$-sphere}

The following considerations are based on the arguments given in \cite[pp. 27]{Hi74}. For further details see also \cite[III.3-III.7]{norbert-thesis}.

Let $\mathbb{S}^3=\{(z_1, z_2)\in \CC^2\, \vert \, \abs{z_1}^2+\abs{z_2}^2=1\}$ be the $3$-dimensional unit sphere and let $g$ be the standard metric on $\mathbb{S}^3$ of curvature one. We can also think of $\mathbb{S}^3$ as the Lie group of all unit quaternions via the identification $(z_1,z_2) \mapsto z_1 + j z_2 \in \HH^2$. Let $Y_2,Y_3,Y_4$ be the left-invariant extensions of the tangent vectors $i, -k, -j\in T_1 \mathbb{S}^3$. In this case, the vectors
\begin{eqnarray*}
Y_2 &=& -y_1 \partial_{x_1} + x_1 \partial_{y_1} - y_2 \partial_{x_2} + x_2 \partial_{y_2}, \\
Y_3 &=& -y_2 \partial_{x_1} - x_2 \partial_{y_1} + y_1 \partial_{x_2} + x_1 \partial_{y_2}, \\
Y_4 &=& x_2 \partial_{x_1} - y_2 \partial_{y_1} - x_1 \partial_{x_2} + y_1 \partial_{y_2}
\end{eqnarray*}
form an orthonormal basis of $T_{(z_1,z_2)}\mathbb{S}^3$ at every point $(z_1,z_2) = (x_1 + y_1 i, x_2 + y_2 i) \in \mathbb{S}^3$.

Then, we can write the Laplacian on $(\mathbb{S}^3, g)$ as $\Delta^{\mathbb{S}^3} f= -\sum_{j=2}^4 Y_j^2(f)$ for all $f\in C^\infty(\mathbb{S}^3)$,
whose eigenvalues are $\lambda_k(\mathbb{S}^3)=k(k+2)$, $k\in\NN\cup \{0\}$ with multiplicity $(k+1)^2$. In particular, every eigenspace $E_{k}$ associated with the eigenvalue $\lambda_k$ decomposes as
\begin{equation} \label{eq:Ekdecomp}
E_k= V_{k, (a_0,b_0)} \oplus V_{k, (a_1, b_1)}\oplus \ldots \oplus V_{k, (a_k, b_k)},
\end{equation}
with any arbitrary choice of pairwise non-collinear vectors $(a_j,b_j) \in \CC \setminus\{(0,0)\}$, where
\begin{align*}
& V_{k, (a,b)}  = {\textrm{span}}_\CC\{u_{a,b}^k, u_{a,b}^{k-1}v_{a,b}, \ldots, u_{a,b} v_{a,b}^{k-1}, v_{a,b}^k\}, \\
& u_{a,b}(z_1, z_2):= az_1+bz_2, \quad v_{a,b}(z_1, z_2):=b \bar z_1 -a\bar z_2,
\end{align*}
for $(a,b)\in\CC^2\setminus\{(0,0)\}$, see \cite[Zerlegungssatz III.6.2]{norbert-thesis}. For short, we write $u:= u_{(a,b)}$, $v:= v_{(a,b)}$ for some $(a,b)\neq (0,0)$ and, for $p \in \{0,\ldots, k\}$, we consider
$$
\phi_p:= u^p v^{q-1}
$$
with $p+q=k+1$. (We also set $\phi_p \equiv 0$ for all other choices of $p$.) These functions $\phi_p$ are spherical harmonics, that is, they are restrictions of harmonic homogeneous polynomials on $\CC^2$ to the unit sphere $\mathbb{S}^3$. Then we have $V_{k, (a,b)}={\textrm{span}}_\CC\{\phi_0,\ldots, \phi_k\}$.
A straightforward computation yields (see \cite[p. 30]{Hi74} or \cite[Lemma III.7.1]{norbert-thesis})
\begin{eqnarray}
Y_2(\phi_p) &=& i(p-q+1)\phi_p, \label{eq:Y2phi} \\
Y_3(\phi_p) &=& ip\phi_{p-1}+i(q-1)\phi_{p+1}, \label{eq:Y2phi2}\\
Y_4(\phi_p) &=& -p\phi_{p-1}+(q-1)\phi_{p+1}, \label{eq:Y2phi3}\\
(Y_3^2+Y_4^2)(\phi_p) &=& 2(p-2pq-q+1)\phi_p. \nonumber
\end{eqnarray}
This implies
\[
\Delta^{\mathbb{S}^3}  \phi_p = - \sum_{j=2}^4 Y_j^2(\phi_p) = [ (p+q)^2-1 ] \phi_p = k(k+2) \phi_p,
\]
confirming that the functions $\phi_p$ are eigenfunctions of $\Delta^{\mathbb{S}^3}$ in the eigenspace $E_k$.

Let us briefly describe the underlying representation theory. The Lie group ${\rm SU}(2)$ acts irreducibly on each of the vector spaces $V_{k,(a,b)} \subset \CC[z_1,\bar z_1,z_2,\bar z_2]$ via
$$ \rho: {\rm SU}(2) \times V_{k,(a,b)} \to V_{k,(a,b)}, \quad
\rho(A,P(u,v)) = P( (u,v) \cdot A ), $$
where $P \in \CC[w_1,w_2]$ is any homogenous polynomial of degree $k$.
Using the decomposition \eqref{eq:Ekdecomp}, these irreducible representations $\rho_j$ on each of the factors $V_{k,(a_j,b_j)}$ give rise to the ${\rm SU}(2)$-representation
$$\mu_k := \rho_0 \oplus \rho_1 \oplus \ldots \oplus \rho_k$$
on the eigenspace $E_k$.

On the other hand, the identification of $\mathbb{S}^3$ with the Lie group ${\rm SU}(2)$ via
$$ (z_1,z_2) \mapsto \begin{pmatrix} z_1 & - \bar z_2 \\ z_2 & \bar z_1 \end{pmatrix} $$
provides a canonical isometric ${\rm SU}(2)$-right action on $(\mathbb{S}^3,g)$, which leads to the corresponding unitary ${\rm SU}(2)$-action $$ (Af)(z_1+jz_2) = f((z_1+jz_2)(\alpha+j\beta)) \quad \text{for}\, A =
\begin{pmatrix} \alpha & -\bar \beta \\ \beta & \bar \alpha \end{pmatrix} \in {\rm SU}(2)$$
on the function space $C^\infty(\mathbb{S}^3) \subset L^2(\mathbb{S}^3,g)$. Since $\Delta^{\mathbb{S}^3}$ commutes with isometries, the eigenspace $E_k \subset C^\infty(\mathbb{S}^3)$ is an invariant subspace of this latter action, and its restriction to $E_k$ agrees with the above ${\rm SU}(2)$-representation $\mu_k$ (see \cite[Lemma III.6.5]{norbert-thesis}).

\medskip

Now let $\mathbb{S}^1 \hookrightarrow \mathbb{S}^3 \rightarrow \mathbb{S}^2$ be the Hopf fibration of $(\mathbb{S}^3,g)$, where the fiber through a  point $(z_1,z_2) \in \mathbb{S}^3$ is given by $ F_{(z_1,z_2)} := \{ (e^{it} z_1,e^{it} z_2) \vert\, t \in \RR \} \subset \mathbb{S}^3$. The map $\mathbb{S}^3 \to \mathbb{S}^2$ is a Riemannian submersion, the  fibers are totally geodesic, and we have
\[
T_{(z_1, z_2)}\mathbb{S}^3 = V_{(z_1,z_2)} \oplus H_{(z_1, z_2)}
\]
for any $(z_1, z_2)\in \mathbb{S}^3$, where the vertical component $V_{(z_1,z_2)}$ is spanned by $Y_2$ and the horizontal component $H_{(z_1,z_2)}$ is spanned by
$Y_3$ and $Y_4$. This decomposition induces a corresponding splitting
$$ \Delta^{\mathbb{S}^3} = \Delta^v + \Delta^h $$
of $\Delta^{\mathbb{S}^3}$ into a vertical and a horizontal Laplacian
$\Delta^v$ and $\Delta^h$ (see \cite[Def. 1.2 and 1.3]{BBB:81}) with
\[
\Delta^v = - Y_2^2 \quad \text{and}\,\, \Delta^h =-(Y_3^2+ Y_4^2).
\]
Since the fibres are totally geodesic, the three operators $\Delta^{\mathbb{S}^3}, \Delta^v, \Delta^h$ commute with each other, and $L^2(\mathbb{S}^3)$ admits a Hilbert basis
consisting of simultaneous eigenfunctions of $\Delta^{\mathbb{S}^3}$ and $\Delta^h$ (see \cite{BBB:81}). In our case, this Hilbert basis is obtained through the eigenspaces $E_k$ and their decompositions into the subspaces $V_{k,(a,b)}$, whose corresponding basis vectors $\phi_p$, $p \in \{0,\dots,k\}$, are then the members of this Hilbert basis.

\subsection{Geometry of Berger spheres}

Given the standard metric $g$ on $\mathbb{S}^3$ of curvature $1$ and $\epsilon > 0$, the \emph{Berger sphere} is the Riemannian manifold
$(\mathbb{S}^3,g_\epsilon)$ with
$$ g_\epsilon = \epsilon^2 g\vert_{V\times V} \oplus
g\vert_{H\times H}, $$
and the vector fields $ Y^\epsilon_2:=\epsilon^{-1}Y_2, Y_3^\epsilon:= Y_3, Y_4^\epsilon:= Y_4$ form a global orthonormal frame. The Lie brackets are given by
\[
[Y_2^\epsilon, Y_3^\epsilon] =-\frac{2}{\epsilon}Y_4^\epsilon \qquad
[ Y_2^\epsilon, Y_4^\epsilon] = \frac{2}{\epsilon} Y_3^\epsilon\qquad
[Y_3^\epsilon, Y_4^\epsilon] =-2\epsilon  Y_2^\epsilon,
\]
and the Christoffel symbols of the Levi-Civita connection of $g_\epsilon$ are expressed as
\begin{equation} \label{eq:covYiYj}
\nabla_{Y_j^\epsilon}^{\mathbb{S}^3} Y_k^\epsilon = \sigma_{jk} Y_l^\epsilon
\end{equation}
with $\{j,k,l\} = \{2,3,4\}$ for $k \neq j$, $\sigma_{jj} = 0$ and $\sigma_{23}=-\sigma_{24} = \epsilon- 2/\epsilon$, $\sigma_{32}=\sigma_{43} = - \sigma_{34} = -\sigma_{42} = \epsilon$. In particular, we deduce that
\begin{equation}\label{eq:exteriory2}
 d^{\mathbb{S}^3} Y_2^\epsilon=2\epsilon Y_3^\epsilon\wedge Y_4^\epsilon,  d^{\mathbb{S}^3} Y_3^\epsilon=-\frac{2}{\epsilon} Y_2^\epsilon\wedge Y_4^\epsilon,\, d^{\mathbb{S}^3} Y_4^\epsilon=\frac{2}{\epsilon} Y_2^\epsilon\wedge Y_3^\epsilon  \quad\text{and}\quad \delta^{\mathbb{S}^3} Y_j^\epsilon=0.
 \end{equation}
for $j\in\{2,3,4\}$. Here $\delta^{\mathbb{S}^3}$ is the $L^2$-adjoint of $d^{\mathbb{S}^3}$ with respect to the metric $g_\epsilon$. The curvature tensor associated to the Levi-Civita connection of $g_\epsilon$ can be computed explicitly and is equal to
$$ R^{\mathbb{S}^3}(Y_j^\epsilon,Y_k^\epsilon)Y_l^\epsilon = \tau_{jkl} Y_j^\varepsilon $$
with $\tau_{jkl} = 0$ for $\{j,k,l\} = \{2,3,4\}$, $\tau_{233}=\tau_{244}=\tau_{322}=\tau_{422} = \epsilon^2$ and
$\tau_{344}=\tau_{433}=4-3\epsilon$.
The sectional curvatures of the planes spanned by pairs of $Y_i^\epsilon$'s  are
$$K^{\mathbb{S}^3}({\textrm{span}}\{ Y_2^\epsilon, Y_3^\epsilon\}) =
	K^{\mathbb{S}^3}({\textrm{span}}\{ Y_2^\epsilon, Y_4^\epsilon\}) =  \epsilon^2, \,
	K^{\mathbb{S}^3}({\textrm{span}}\{ Y_3^\epsilon, Y_4^\epsilon\}) = 4-3\epsilon^2.$$
The Ricci tensor of any vector $v= \sum_{j=2}^4 a_j Y_j^\epsilon$ is given by
\[
\Ric^{\mathbb{S}^3}(v,v)=\sum_{j=2}^4 g_\epsilon(R^{\mathbb{S}^3}(v, Y_j^\epsilon)Y_j^\epsilon,v)= 2 \epsilon^2 a_2^2 + (4-2\epsilon^2)(a_3^2+a_4^2),
\]
which yields the following lower Ricci curvature bounds
\begin{equation}\label{eq:ricci-bounds}
\inf_{\Vert v \Vert =1} \left(\Ric^{\mathbb{S}^3}(v,v)\right) \ge \left\{
\begin{matrix}
	2\epsilon^2, & \text{if $\epsilon \le 1$,}\\
	4-2\epsilon^2, & \text{if $\epsilon >1$.}
\end{matrix}\right.
\end{equation}
Observe, moreover, that $\lim_{\epsilon \to 0} \Ric^{\mathbb{S}^3}(v) =  4(a_3^2+a_4^2)$.
Since the ``scaled" Hopf fibration $\mathbb{S}^1_\epsilon \hookrightarrow \mathbb{S}^3 \rightarrow \mathbb{S}^2$ is a Riemannian submersion with totally geodesic fiber $\mathbb{S}^1_\epsilon$
(see \cite[Prop. 5.2]{BBB:81}), any horizontal vector $v^h\in H_{(z_1,z_2)}$ for $(z_1, z_2)\in \mathbb{S}^3$ is uniquely mapped to a vector
$\tilde v \in T\mathbb{S}^2$
and so we can say that, as $\epsilon \to 0$, the Ricci curvature of $(\mathbb{S}^3, g_\epsilon)$ collapses to the Ricci curvature of $\CP^1$ with the Fubini-Study metric.

\subsection{Eigenvalue decomposition of the ordinary Laplacian on Berger Spheres}

In this subsection, we will compute the eigenvalues of the Laplacian on the Berger sphere $\mathbb{S}^3$. We refer to \cite[Lem. 4.1]{tanno:79}, \cite[Prop. 3.9]{Lauret:19} for similar results.

Since $Y_2^\epsilon, Y_3^\epsilon, Y_4^\epsilon$ form a global divergence-free orthonormal frame by \eqref{eq:exteriory2}, the Laplacian on $(\mathbb{S}^3, g_\epsilon)$ is given by
\[
\Delta^{\mathbb{S}^3}_\epsilon f = -\sum_{j=2}^4 (Y_j^\epsilon)^2 f \qquad \forall \, f\in C^\infty(\mathbb{S}^3).
\]
Using the fact that $\mathbb{S}^1_\epsilon \hookrightarrow \mathbb{S}^3\rightarrow \mathbb{S}^2$ is a Riemannian submersion with totally geodesic fibers, we can write
\[
\Delta^{\mathbb{S}^3}_\epsilon = \Delta_\epsilon^v + \Delta_\epsilon^h = \epsilon^{-2} \Delta^v + \Delta^h = \Delta^{\mathbb{S}^3} + (\epsilon^{-2}-1)\Delta^v,
\]
where $\Delta^v, \Delta^h$ are the vertical and horizontal Laplacian w.r.t. $g$ and $\Delta^{\mathbb{S}^3}$ is the Laplacian on $\mathbb{S}^3$ w.r.t $g$.

Since $\{\phi_p\}_{p}$ is a Hilbert basis for $L^2(\mathbb{S}^3, g)$,
the set $\{\phi_p^\epsilon:= \epsilon^{1/2}\phi_p\}_p$ is a Hilbert basis for $L^2(\mathbb{S}^3, g_\epsilon)$.
Moreover, the functions $\phi_p^\epsilon$'s are eigenfunctions for $\Delta^{\mathbb{S}^3}_\epsilon$:
\begin{align*}
	\Delta^{\mathbb{S}^3}_\epsilon \phi_p^\epsilon & = k(k+2)  \phi_p^\epsilon + (\epsilon^{-2}-1) (p-q+1)^2 \phi_p^\epsilon \\
	& = \big[k(k+2) + \big(\frac{1}{\epsilon^2}-1\big)(2p-k)^2\big]\phi_p^\epsilon.
\end{align*}
The eigenvalues of $\Delta^{\mathbb{S}^3}_\epsilon$ are therefore all of the form
\[
 k(k+2) + \big(\frac{1}{\epsilon^2}-1\big)(2p-k)^2, \qquad k\in\NN\cup \{0\}, \quad p\in\{0,\ldots, k\}.
\]
One could also read off the spectrum of the vertical Laplacian $\Delta^v$ from \cite[Lemma 3.1]{tanno:79}.
The following table lists the eigenvalues for $k \in \{0,1,2,3\}$:\\

\begin{center}
\begin{tabular}{|c|c|c|}
	\hline
	$k$ &  $p$ &  $\lambda_{k,p}^\epsilon$\\
	\hline
	$0$ & $0$  & $0$ \\
	\hline
	$1$ & $0,1$ &  $2+\epsilon^{-2}$\\
	\hline
	$2$ & $ 0,2$  & $4+4\epsilon^{-2}$ \\
	\hline
	$2$ & $1$  & $8$ \\
	\hline
	$3$ & $0,3$  & $6+9\epsilon^{-2}$ \\
	\hline
	$3$ & $1,2$ & $14+\epsilon^{-2}$\\
	\hline
\end{tabular}
\end{center}

Therefore the first non-zero eigenvalue of $\Delta_\varepsilon^{\mathbb{S}^3}$ is $8$ if $\epsilon\leq 1/\sqrt 6$ and $2+\epsilon^{-2}$
if $\epsilon>1/\sqrt{6}$.
Moreover, all the eigenvalues of $\Delta_\varepsilon^{\mathbb{S}^3}$ tend to $\infty$, if $k\neq 2p$, when $\epsilon$ tends to $0$ and are equal to $k(k+2)$ if $k=2p$.
Hence, as $\epsilon\to 0$, the only eigenvalues not escaping to infinity are the ones coming from the Laplacian on $\CP^1$ with the Fubini-Study metric
(recall that its spectrum is $4p(p+1)=k(k+2)$ with $p\in\NN \cup \{0\}$ and $k=2p$).

\subsection{The magnetic Laplacian with constant magnetic potential along the fibers on Berger spheres}

As before let $(\mathbb{S}^3,g_\epsilon)$ be the Berger sphere and set $\alpha:=\varepsilon t Y_2^{\varepsilon}=tY_2$, by the identification through musical isomorphisms. Then $|\alpha|^2=\epsilon^2 t^2$ and $\delta^{\mathbb{S}^3}\alpha=0$ by \eqref{eq:exteriory2}. Therefore for the magnetic Laplacian  $\Delta_\epsilon^\alpha$ on $(\mathbb{S}^3, g_\epsilon)$ we have,
\[
\Delta_\epsilon^\alpha f = \Delta_\epsilon^{\mathbb{S}^3} f -2i\alpha^\sharp(f) + \epsilon^2 t^2 f.
\]
Applying this identity to the functions $f:=\phi_p^\epsilon=\epsilon^{1/2}\phi_p$ yields

$$\Delta_\epsilon^\alpha \phi_p^\epsilon =\left(k(k+2)+ \left(\frac{1}{\epsilon^2}-1\right)(2p-k)^2+2(2p-k)t+\epsilon^2 t^2\right)\phi_p^\epsilon,$$
since
\begin{equation} \label{eq:alphagradphi}
\alpha^\sharp(\phi_p^\epsilon) = t Y_2(\phi_p^\epsilon) = it(p-q+1)\phi_p^\epsilon=it(2p-k)\phi_p^\epsilon
\end{equation}
by \eqref{eq:Y2phi} and $p+q=k+1$.
Therefore the spectrum of $\Delta_\epsilon^\alpha$ is given by
\begin{equation} \label{eq:specmagS3}
k(k+2)+\left(\frac{1}{\epsilon^2}-1\right)(2p-k)^2+2(2p-k)t+\epsilon^2 t^2, \quad k\in\NN\cup\{0\},\,\, p\in\{0,\ldots, k\}.
\end{equation}
If $\epsilon\to 0$ (that is, if we are shrinking the fibers), the only eigenvalues not escaping to infinity, are $k(k+2)$ for even integers $k \ge 0$, that is, the eigenvalues of the Laplacian on $\CP^1$ with Fubini-Study metric. In other words, the magnetic potential disappears under this process.

\section{Special eigen-$1$-forms of the (magnetic) Hodge Laplacian on $\mathbb{S}^3$}
\label{sec:dudvalphaeig}

Let $\alpha=\varepsilon t Y_2^\varepsilon$ be a Killing vector field of constant norm, then by Proposition \ref{prop:deltaalphad} we have that $\Delta_\epsilon^\alpha(d^{\mathbb{S}^3} u)=d^{\mathbb{S}^3}(\Delta_\epsilon^\alpha u)$ and $\Delta_\epsilon^\alpha(d^{\mathbb{S}^3} v)=d^{\mathbb{S}^3}(\Delta_\epsilon^\alpha v)$. Now, by Equation \eqref{eq:specmagS3}, for the function $u$ (which corresponds to $p=q=k=1$) and the function $v$ (which corresponds to $p=0,\, q=2, k=1$), we compute
$$\Delta_\epsilon^\alpha u=(2+\frac{1}{\varepsilon^2}+2t+\epsilon^2 t^2)u\quad\text{and}\quad \Delta_\epsilon^\alpha v=(2+\frac{1}{\epsilon^2}-2t+\epsilon^2 t^2)v.$$
Hence $d^{\mathbb{S}^3}u$ and $d^{\mathbb{S}^3}v$ are eigenforms of $\Delta_\epsilon^\alpha$ corresponding to the eigenvalues $(2+\frac{1}{\epsilon^2}+2t+\epsilon^2 t^2)$ and $(2+\frac{1}{\epsilon^2}-2t+\epsilon^2 t^2)$ respectively.

To compute $\Delta_\epsilon^\alpha Y_2^\varepsilon$, we first have by \eqref{eq:exteriory2} that $Y_2^\varepsilon$ is coclosed and $d^{\mathbb{S}^3} Y_2^\epsilon=2\epsilon Y_3^\epsilon\wedge Y_4^\epsilon$. Thus, by \eqref{eq:covYiYj}, we get that  $\Delta_\epsilon^{\mathbb{S}^3} Y_2^\epsilon=4\epsilon^2 Y_2^\epsilon$. Also, we have
$$A^{[1],\alpha}Y_2^\epsilon=-A^\alpha(Y_2^\epsilon)=-Y_2^\epsilon\lrcorner (2\epsilon tY_3^\epsilon\wedge Y_4^\epsilon)=0, \quad\text{and}\quad \nabla^M_\alpha Y_2^\epsilon=0.$$
Therefore, by Equation \eqref{eq:deltaalphaforms},  we get that $\Delta_\epsilon^\alpha Y_2^\epsilon=\epsilon^2(4+t^2)Y_2^\epsilon.$ In the same way one can check that
$$\Delta_\epsilon^\alpha Y_3^\epsilon=(\frac{4}{\epsilon^2}+t^2\epsilon^2)Y_3^\epsilon+2i\epsilon t(1-\epsilon+\frac{2}{\epsilon})Y_4^\epsilon,$$
and that
$$\Delta_\epsilon^\alpha Y_4^\epsilon=(\frac{4}{\epsilon^2}+t^2\epsilon^2)Y_4^\epsilon-2i\epsilon t(1-\epsilon+\frac{2}{\epsilon})Y_3^\epsilon.$$
Hence for $\epsilon=2$, we get that $Y_3^\epsilon$ and $Y_4^\epsilon$ are eigenvectors associated to the eigenvalue $1+4t^2$.

\end{document}